\definecolor{cadmiumgreen}{rgb}{0.0, 0.42, 0.24}
\numberwithin{equation}{section}
\newtheorem{theorem}{Theorem}[section]
\theoremstyle{plain}
\newtheorem{lemma}[theorem]{Lemma}
\theoremstyle{plain}
\newtheorem{proposition}[theorem]{Proposition}
\theoremstyle{plain}
\theoremstyle{definition}
\newtheorem{remark}[theorem]{Remark}
\newcommand{\N}{{\mathbb N}}
\newcommand{\R}{{\mathbb R}}
\newcommand{\Ex}{{{\rm Ex}}}
\newcommand{\eps}{\varepsilon}
\newcommand{\beq}{\begin{equation}}
\newcommand{\eeq}{\end{equation}}
\renewcommand{\le}{\leqslant}
\renewcommand{\ge}{\geqslant}
\newcommand{\w}{W^{s,p}_0(\Omega)}
\newcommand{\fpl}{(-\Delta)_p^s\,}
\newcommand{\ds}{{{\rm d}}_\Omega^s}
\def\Xint#1{\mathchoice
{\XXint\displaystyle\textstyle{#1}}%
{\XXint\textstyle\scriptstyle{#1}}%
{\XXint\scriptstyle\scriptscriptstyle{#1}}%
{\XXint\scriptscriptstyle\scriptscriptstyle{#1}}%
\!\int}
\def\XXint#1#2#3{{\setbox0=\hbox{$#1{#2#3}{\int}$ }
\vcenter{\hbox{$#2#3$ }}\kern-.6\wd0}}
\def\dashint{\Xint-}
\newenvironment{enumroman}{\begin{enumerate}

}{\end{enumerate}}
\title[Fine boundary regularity for the fractional $p$-Laplacian]{Fine boundary regularity for the degenerate\\ fractional ${\bf p}$\,-\,Laplacian}
\author[A.\ Iannizzotto, S.\ Mosconi, M.\ Squassina]{Antonio Iannizzotto, Sunra Mosconi, and Marco Squassina}
\address[A.\ Iannizzotto]{Department of Mathematics and Computer Science
\newline\indent
Universit\`a degli Studi di Cagliari, 
Viale L.\ Merello 92, 09123 Cagliari, Italy}
\email{antonio.iannizzotto@unica.it}
\address[S.\ Mosconi]{Dipartimento di Matematica e Informatica
\newline\indent
Universit\`a degli Studi di Catania, 
Viale A.\ Doria 6, 95125 Catania, Italy}
\email{mosconi@dmi.unict.it}
\address[M.\ Squassina]{Dipartimento di Matematica e Fisica
\newline\indent
Universit\`a Cattolica del Sacro Cuore,
Via dei Musei 41, 25121 Brescia, Italy}
\email{marco.squassina@unicatt.it}
\subjclass[2010]{35D10, 35R11, 47G20.}
\keywords{Fractional $p$-Laplacian, Fractional Sobolev spaces, Weighted H\"older regularity, Boundary regularity.}
\begin{document}

\begin{abstract}
We consider a pseudo-differential equation driven by the fractional $p$-Laplacian $\fpl$ with $s\in(0,1)$ and $p\ge 2$ (degenerate case), with a bounded reaction $f$ and Dirichlet type conditions in a smooth domain $\Omega$. By means of barriers, a nonlocal superposition principle, and the comparison principle, we prove that any weak solution $u$ of such equation exhibits a weighted H\"older regularity up to the boundary, that is, $u/\ds\in C^\alpha(\overline\Omega)$ for some $\alpha\in(0,1)$, ${\rm d}_\Omega$ being the distance from the boundary.
\end{abstract}

\maketitle

%

\section{Introduction and main result}\label{sec1}

\noindent
This paper is devoted to the study of some fine boundary regularity properties of the weak solution to the following problem:
\beq\label{dp}
\begin{cases}
\fpl u=f & \text{in $\Omega$} \\
u=0 & \text{in $\Omega^c$.}
\end{cases}
\eeq
Here, and throughout the paper, $\Omega\subset\R^N$ ($N>1$) is a bounded domain with a $C^{1,1}$ boundary $\partial\Omega$, $\Omega^c=\R^N\setminus\Omega$, $s\in\ (0,1)$, $p\in\ (1,\infty)$ are real numbers, and $f\in L^\infty(\Omega)$. The leading operator is the $s$-fractional $p$-Laplacian, defined as the gradient of the energy
\[J(u)=\frac{1}{p}\iint_{\R^N\times\R^N}\frac{|u(x)-u(y)|^p}{|x-y|^{N+ps}}\,dx\,dy\]
in the space
\[\w=\big\{u\in L^p(\R^N):\,J(u)<\infty,\,u=0 \quad \text{in $\Omega^c$}\big\}.\]
When restricted to conveniently smooth $u$'s, such operator can be rephrased pointwisely as
\[\fpl u(x)=2\lim_{\eps\to 0^+}\int_{B^c_\eps(x)}\frac{|u(x)-u(y)|^{p-2}(u(x)-u(y))}{|x-y|^{N+ps}}\,dy,\]
i.e., as a pseudo-differential operator of fractional order $s$ and summability power $p$, which for $p=2$ reduces to the Dirichlet fractional Laplacian $(-\Delta)^s$ (up to a multiplicative constant). For a deep discussion on various notions (weak, viscous and strong)  of solutions to \eqref{dp}, see \cite{KKL}. A useful comparison principle for $\fpl$ has been proved in \cite{LL}, a Hopf's lemma in \cite{DQ} and some strong comparison principles in \cite{J}, while its spectral properties are studied in \cite{FP,IS,LL}. 
\vskip2pt
\noindent
The interior regularity theory for problem \eqref{dp} is quite well developed. The linear case $p=2$ is quite classical and Schauder estimates are available in the form $f\in C^{\alpha}\Rightarrow u\in C^{2s+\alpha}$ whenever $2s+\alpha$ is not an integer (see \cite{RS}). In the general case $p\neq 2$ the situation is more involved. The first results are \cite{DKP, DKP1}, dealing with local regularity and Harnack inequalities when $f\equiv 0$ in \eqref{dp}. In the inhomogeneous case \cite{L1, KMS, BP, IMS1, IMS2} contain local H\"older regularity estimates under various integrability assumptions on $f$, however the dependance of the H\"older exponent is not specified and not optimal. The papers \cite{KMS2, S, BL} deal with the degenerate case $p\ge 2$ and show higher fractional differentiability of $u$ when fractional differentiability of the forcing term is assumed. In \cite{MM}  higher fractional differentiability is obtained for any $p>1$ under summability assumptions on $f$. Finally, still in the case $p\ge 2$, the optimal H\"older exponent for the solution of \eqref{dp} is obtained in \cite{BLS}, giving e.g.\ $u\in C^{p's}_{\rm loc}(\Omega)$ when $f\in L^{\infty}(\Omega)$ and $p's<1$.
\vskip2pt 
\noindent
The boundary regularity for problem \eqref{dp} is more delicate. As a comparison, consider its classical counterpart
\beq
\label{dpex}
\begin{cases}
-\Delta_p u=f &\text{in $\Omega$}\\
u\equiv 0&\text{on $\partial\Omega$},
\end{cases}
\eeq
(formally obtained by letting $s\to 1^-$ in \eqref{dp}). It is well known that, for example, $u\in C^{1,\alpha}_{\rm loc}(\Omega)$ whenever $f$ is bounded, and nothing more can be expected, regardless of the smoothness of $f$. This regularity can easily be extended up to the boundary, as follows. One straightens the boundary near $x_{0}\in \partial\Omega$ and consider the odd reflection of the resulting $u$: as it turns out, it solves a similar equation in a larger domain containing $x_{0}$ in its interior, therefore satisfying the previous local regularity estimates. The odd reflection trick then shows that in general the interior and boundary regularity for \eqref{dpex} coincide. Boundary regularity for a wider class of nonlinear {\em local} operators is proved in \cite{L}.
\vskip2pt
\noindent
This is no longer true for the fractional problem \eqref{dp}. For instance, the function  $u(x)=(1-|x|^{2})_{+}^{s}$ solves \eqref{dp} for $\Omega=B_{1}$, $p=2$ and $f= {\rm const.}$ in $\Omega$. Its interior regularity is $C^{\infty}$ (as the Schauder theory {\em a priori} forces for $C^{\infty}$ right-hand sides), but its boundary regularity is only $C^{s}$. Thus, we see that there is no obvious way to reproduce the odd reflection trick to deduce boundary regularity for \eqref{dp}, since actually boundary and interior regularity are quantitatively different.
\vskip2pt
\noindent
The first result dealing with the  boundary regularity for problem \eqref{dp} is contained in \cite{RS} for $p=2$, where it is proved that $u\in C^{s}(\R^{N})$ whenever the non-homogeneous term is bounded. In the nonlinear case, \cite{IMS1, IMS2} contain a global H\"older continuity result, with an unspecified H\"older exponent (see also \cite{KKP} for a refinement and generalisation when $f= 0$). Coupling the barrier argument contained in \cite{IMS1} with the optimal interior regularity of \cite{BLS} provides the optimal regularity $u\in C^{s}(\R^{N})$ when $p\ge 2$. The same is expected to be true in the case $p\in (1, 2)$, but the optimal (at least $C^{s}$) interior regularity in this framework is missing. 
\vskip2pt
\noindent
Still, even in the linear case, there is much more to be said. Despite the optimal regularity $u\in C^{s}(\overline\Omega)$ rules out in general the existence of the classical normal derivative, in the seminal paper \cite{RS} a regularity result for the $s$-normal derivative
\[\frac{\partial u}{\partial\nu^{s}}(x_{0}):=\lim_{t\to 0^+}\frac{u(x_{0}+t\nu_{x_{0}})}{t^{s}},\]
where $\nu_{x_{0}}$ denotes the inner normal to $\partial\Omega$ at $x_{0}\in \partial\Omega$. More precisely, they proved that, if when $p=2$ and $\partial\Omega$ is $C^{1,1}$, then any solution $u$ of \eqref{dp} satisfies
\[\left\|\frac{u}{\ds}\right\|_{C^{\alpha}(\overline\Omega)}\leq C\|f\|_{L^{\infty}(\Omega)},\quad {\rm d}_\Omega(x):={\rm dist}(x, \partial\Omega)\]
for some $\alpha=\alpha(N, s, \Omega)\in (0, 1)$, $C=C(N, s, \Omega)>0$.
\vskip2pt
\noindent
The latter can also be seen as a weighted H\"older regularity result and it provided several  applications to ovedetermined problems \cite{FJ}, nonlinear analysis \cite{IMS, ILPS}, free boundary problems \cite{CRS} and integration by parts formula \cite{RSV}. For further references and related results we refer to the survey article \cite{RO}.
\vskip2pt
\noindent
Our main contribution is an analogous fine boundary regularity result for the weak solution to \eqref{dp} in the degenerate case $p\ge 2$.

\begin{theorem}\label{main}
Let $p\ge 2$, $\Omega$ be a bounded domain with $C^{1,1}$ boundary and ${\rm d}_\Omega(x):={\rm dist}(x, \partial\Omega)$. Then there exist $\alpha\in (0,s]$ and $C>0$, depending on $N$, $p$, $s$, and $\Omega$, s.t.\ for all $f\in L^\infty(\Omega)$ the weak solution $u\in W^{s,p}_0(\Omega)$ to problem \eqref{dp} satisfies $u/\ds\in C^\alpha(\overline\Omega)$ and
\[\Big\|\frac{u}{\ds}\Big\|_{C^\alpha(\overline\Omega)}\le C\|f\|_{L^\infty(\Omega)}^\frac{1}{p-1}.\]
\end{theorem}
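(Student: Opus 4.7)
The plan is to adapt the Ros-Oton--Serra boundary regularity scheme from the linear case $p=2$ to the degenerate nonlinear setting, combining the three ingredients announced in the abstract: boundary barriers comparable to $\ds$, a nonlocal superposition principle replacing the failed linear identity $\fpl(u-c\phi)=\fpl u - c^{p-1}\fpl\phi$, and the comparison principle of \cite{LL} coupled with the optimal interior regularity of \cite{BLS}.

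First, I would establish the pointwise bound $|u(x)|\le C\|f\|_\infty^{1/(p-1)}\ds(x)$ on $\Omega$. By the $(p-1)$-homogeneity of $\fpl$, one may reduce to $\|f\|_\infty\le 1$. In a half-space one computes $\fpl$ on the explicit profile $(x_N)_+^s$; for a $C^{1,1}$ domain one flattens the boundary near each point and glues this model to a smooth interior function, producing local barriers $\phi_\pm$ with $\pm\fpl\phi_\pm\ge 1$ and $\phi_\pm\asymp\ds$ in a neighborhood of $\partial\Omega$. Applying the comparison principle to $\pm u$ against suitable multiples of $\phi_\pm$ then gives the claimed bound.

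The heart of the proof is an iterative oscillation-decay estimate at a boundary point $x_0\in\partial\Omega$. Fix a normalized barrier $\phi_{x_0}$ at $x_0$. At dyadic scales $r_k=\theta^k r_0$, I seek constants $c_k$ so that $|u(x)-c_k\phi_{x_0}(x)|\le C r_k^{s+\alpha}$ on $B_{r_k}(x_0)\cap\Omega$ with $|c_{k+1}-c_k|\le C r_k^\alpha$. Rescaling to unit scale,
\[
u_k(y):=\frac{u(x_0+r_k y)-c_k\phi_{x_0}(x_0+r_k y)}{r_k^{s+\alpha}},
\]
the $(p-1)$-homogeneity and fractional scaling of $\fpl$ turn the equation for $u_k$ into one with right-hand side of size $o(1)$, provided $\alpha$ is chosen small enough. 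At this point the lack of the identity $\fpl(u-c\phi)=\fpl u-c^{p-1}\fpl\phi$ for $p>2$ becomes the main nonlinear difficulty: the nonlocal superposition principle supplies a differential inequality for $u_k$ against a controlled datum, with the nonlinear defect absorbed as a lower-order perturbation depending on $c_k\phi_{x_0}$. Coupling this with the interior $C^{p's}_{\rm loc}$ bound of \cite{BLS} on compact subsets of the rescaled domain $(\Omega-x_0)/r_k$ and the pointwise boundary estimate of the first step yields uniform equicontinuity of $(u_k)$. A compactness/contradiction argument then allows one to inductively select $c_{k+1}$ so that $\|u_{k+1}\|_{L^\infty}\le \tfrac12\|u_k\|_{L^\infty}$, which upon iteration gives
\[
\sup_{x\in B_r(x_0)\cap\Omega}\Big|\frac{u(x)}{\ds(x)}-c_\infty(x_0)\Big|\le C r^\alpha\|f\|_\infty^{1/(p-1)}.
\]
A standard patching of this boundary estimate with the interior H\"older regularity of $u/\ds$ on $\{\ds\ge\delta\}$ then delivers the global bound stated in the theorem.

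The main obstacle, I expect, is controlling the nonlinear defect in the superposition step for $p>2$: the errors generated when ``subtracting'' $c_k\phi_{x_0}$ from $u$ must be kept strictly subleading at every scale, otherwise the geometric decay of $\|u_k\|_{L^\infty}$ fails and the induction collapses. A secondary delicate point is the uniform control of the nonlocal tails when transferring the half-space barrier estimates to the curved $C^{1,1}$ domain, together with the verification that the exponent $\alpha$ produced by the iteration is uniform in $x_0\in\partial\Omega$.
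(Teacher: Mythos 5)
Your proposal diverges from the paper's method at the decisive step and, as stated, contains a gap that the paper's machinery is designed precisely to avoid. You propose to subtract a barrier, form the rescaled differences $u_k=(u-c_k\phi_{x_0})/r_k^{s+\alpha}$, and then argue that ``the nonlocal superposition principle supplies a differential inequality for $u_k$ \dots with the nonlinear defect absorbed as a lower-order perturbation.'' But the superposition principle in this paper (Proposition \ref{spp}) does nothing of the kind: it computes $\fpl w$ for a function $w$ that coincides with $u$ on the domain of interest and with $v$ on a set $V$ disjoint from it; it is a \emph{glueing} identity controlled through tail integrals, not a linearization tool, and it gives no handle whatsoever on $\fpl(u-c\phi)$. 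To run your Campanato-type iteration you would need either a genuine linearization estimate for the degenerate operator around the barrier, or a Liouville/classification theorem for blow-up limits of fractional $p$-harmonic functions in a half-space with a prescribed boundary rate -- neither is available, and this is precisely the obstruction the authors circumvent by \emph{never subtracting a barrier from the solution}. Instead they prove two one-sided weak Harnack inequalities for $u/\ds-m$ in terms of a nonlocal excess $\Ex(u,m,R)$ (Propositions \ref{lower}, \ref{upper}), built from multiplicative perturbations $(1+\lambda\varphi)\ds$ under a $C^{1,1}$-small diffeomorphism (Lemma \ref{lowerbarrier}), scaled torsion functions with a Hopf lemma (Lemmas \ref{torest}, \ref{hopf}), and the comparison principle, and then feed these directly into a monotone two-sided oscillation-decay iteration (Theorem \ref{osc}), with the Ros-Oton--Serra interpolation lemma (Lemma \ref{3con}) only at the very end.

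A second missing idea is the asymmetry between the lower and upper estimates when $p>2$. When the excess is large and one seeks an \emph{upper} barrier for a subsolution bounded above by $M\ds$, the barrier must change sign near $\partial\Omega$ (because $u$ does), so no modified torsion function or multiplicative perturbation of $\ds$ can work, and no explicit profile with bounded $\fpl$ having that behaviour is known. The paper resolves this with an abstract double-obstacle construction via a Lewy--Stampacchia inequality (Lemma \ref{dobstacle}, Lemma \ref{upperbarrier}), which first forces $u\le 0$ on a smaller half-ball (Lemma \ref{negative}) before the torsion-function argument can be reapplied. Your scheme, being formally symmetric in $\pm u$, offers no mechanism to handle this sign-change phenomenon, and the degeneracy of $\fpl$ at the vanishing of the normal $s$-derivative is exactly where the linear intuition breaks. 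Finally, your plan does not address the tail-control bookkeeping needed to localize: for $p>2$ the truncation estimate produces \emph{two} tail terms ${\rm tail}_{p-1}$ and ${\rm tail}_1$ with different scaling (Proposition \ref{updown}, Remark \ref{rem00}), and balancing the $|m|^{p-2}$ weight on ${\rm tail}_1$ against the excess is essential to make the induction in Theorem \ref{osc} close.
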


\noindent
With the result above we hope to provide nonlocal regularity theory with an analog of Lieberman's $C^1(\overline\Omega)$ regularity theorem for the (local) $p$-Laplacian \cite{L}. We privilege weak solutions (e.g., with respect to viscosity solutions, see \cite{L1}) mainly because we consider problem \eqref{dp} in a variational perspective. One possible development of this research is towards a nonlinear extension of the main result of \cite{IMS}, i.e., the equivalence of Sobolev and weighted H\"older local minimizers for the energy functional of a nonlinear boundary value problem driven by $\fpl$, which will require a regularity result like Theorem \ref{main} above for a slightly more general nonlocal, nonlinear operator modeled on $\fpl$ (as in \cite{GPM} for the local case $s=1$). The singular case $p\in (1,2)$ of Theorem \ref{main} remains open, but it can probably be dealt with through suitable variations of the techniques presented here.
\vskip2pt
\noindent
{\bf Sketch of proof.} Our aim is a weak Harnack inequality for the function $u/\ds$, and in particular a pointwise control of  $u/\ds$ in terms of an integral quantity. Our strategy is to exploit the nonlocality of the operator and define the following nonlocal excess
\[\Ex(u, k, R, x_{0})=\dashint _{\tilde B_{R, x_{0}}}\Big|\frac{u}{\ds}-k\Big|\, dx\]
with $k\in\R$, $R>0$, and $\tilde B_{R, x_{0}}$ being a small ball of radius comparable to $R$, placed at distance greater than $R$ in the inner normal direction from $x_{0}\in \partial\Omega$ (see figure \ref{fig1} and properties \eqref{bt} for a precise definition).  We call it nonlocal because it turns out that, given a bound on $\fpl u$, the pointwise behaviour of $u/\ds$ {\em inside $B_{R}(x_{0})\cap \Omega$} is controlled by the magnitude of the excess of $u$ in $\tilde B_{R, x_{0}}$, which takes into account the behaviour of $u/\ds$ {\em outside of $B_{R}(x_{0})\cap \Omega$}. 
\vskip2pt
\noindent
In order to describe the scheme of the proof, consider the case of $\Omega$ being the half-space $\R^{N}_{+}=\{x_{N}>0\}$, $x_{0}=0$, $R=1$, and $D_{1}=B_{1}\cap \R^{N}_{+}$.  We are going to prove two types of weak Harnack inequalities. The first one is for supersolutions and reads
\beq
\label{intlow}
\begin{cases}
\fpl u\ge 0 &\text{in $D_1$}\\
u\ge \ds&\text{in $\R^{N}_{+}$}
\end{cases}
\quad \Longrightarrow\quad \inf_{B_{1/4}\cap \R^{N}_{+}}\Big(\frac{u}{\ds}-1\Big)\ge \sigma\,\Ex(u).
\eeq
Here $e_{N}=(0, \dots, 1)$, $B_{1/4}$ is centered at $0$ and $\sigma$ is a positive constant depending only on $N, p$, and $s$. Besides, the translated ball $e_{N}+B_{1/4}$ corresponds to $\tilde B_{1}$ and we have set
\[\Ex(u)=\Ex(u,1,1,0)=\dashint_{e_{N}+B_{1/4}}\Big(\frac{u}{\ds}-1\Big)\, dx.\]
The second one regards subsolutions and is
\beq
\label{intup}
\begin{cases}
\fpl u\le 0 &\text{in $D_1$}\\
u\le \ds&\text{in $\R^{N}_{+}$}
\end{cases}
\quad \Longrightarrow\quad \inf_{B_{1/4}\cap \R^{N}_{+}}\Big(1-\frac{u}{\ds}\Big)\ge \sigma\,  \Ex(u).
\eeq
Note that in both cases we have a precise sign information on the difference $u/\ds-1$ in the translated ball. The similarity of the two statements is misleading, since, as will be seen later, the second one is actually considerably more difficult to prove than the first one. 
\vskip2pt
\noindent
The reason why these kind of nonlocal weak Harnack inequality hold lies in the following nonlocal superposition principle, which in a different form was proved in \cite{IMS1}. Given a regular function $w$ and a perturbation $u$, define 
\[\widetilde{w}_u= w+(u-w)\chi_{\tilde B_{1}}\]
Then, under some mild control of $w$ in terms of $\ds$ on $\tilde{B}_{1}$, we have
\beq
\label{intnls}
\begin{cases}
u\ge w \text{ in $\tilde B_{1}$}\quad \Longrightarrow\quad (-\Delta)^s \widetilde{w}\le (-\Delta)^sw -c\, \Ex(u)\quad \text{in $D_1$}\\[2pt]
u\le w \text{ in $\tilde B_{1}$}\quad \Longrightarrow\quad (-\Delta)^s \widetilde{w}\ge (-\Delta)^sw +c\, \Ex(u)\quad \text{in $D_1$}
\end{cases}
\eeq
for some $c=c(N, p, s)>0$.
\vskip2pt
\noindent
Our strategy for proving, e.g., \eqref{intlow} can then be roughly described as follows:
\begin{enumerate}
\item
Build a one parameter family of basic barrier $w_{\lambda}$ ($\lambda\in\R$) obeying the bounds
\beq
\label{intcontrols}
\begin{cases}
|\fpl w_{\lambda}|\le C\lambda&\text{in $\tilde B_{1}$}\\
w_{\lambda}\ge  (1+\lambda)\ds&\text{in $D_{1/4}$}\\
w_{\lambda}\le \ds &\text{in $D_{1}^{c}$}
\end{cases}
\eeq
\item
Choose $\lambda\simeq \Ex(u)$ so that the nonlocal superposition principle \eqref{intnls} ensures 
\[\fpl \widetilde w_{\lambda}\le 0\le \fpl u\quad \text{in $D_{1}$}.\]
and thanks to the global control $w_{\lambda}\le u$ in $D_{1}^{c}$, deduce that $\widetilde w_{\lambda}$ is an actual lower barrier for $u$. Thus, by comparison, $w=w_{\lambda}\le u$ in $D_{1/4}$.
\item
Conclude from the second condition in \eqref{intcontrols} that 
\[\frac{u}{\ds}-1\ge \frac{w}{\ds}-1\ge \lambda\simeq \Ex(u)\quad \text{in $D_{1/4}$}.\]
\end{enumerate}
\vskip2pt
\noindent
Most of the paper will thus be devoted to the construction of the family of basic barriers satisfying \eqref{intcontrols}. As it turns out, the construction will depend on the size of $\Ex(u)$, and we will need three different kinds of barriers. More precisely, for small values of $\Ex(u)$ (and thus of $\lambda$), we will build the barrier $w_{\lambda}$ starting from $\ds$ (which in the case of a half-space obeys $\fpl\ds=0$) and performing  a $C^{1,1}$-small diffeomorphism of the domain supported in $D_{1}$, to get the first condition in \eqref{intcontrols}.
A similar construction yields the upper barrier to prove \eqref{intup} in the case of small excess.
\vskip2pt
\noindent
For large values of $\Ex(u)$, the lower barrier will be a multiple (of order $\simeq\lambda$) of the torsion function
\[\begin{cases}
\fpl v=1 &\text{in $D_{1/2}$}\\
v\equiv 0&\text{in $D_{1/2}^{c}$},
\end{cases}\]
which, thanks to a Hopf type lemma and the size of $\Ex(u)\simeq\lambda$, fulfills the second bound in \eqref{intcontrols}.
\vskip2pt
\noindent
Unfortunately, when we are looking for the corresponding basic {\em upper} barrier $w_{\lambda}$ for large $\Ex(u)\simeq\lambda$, namely
\[
\begin{cases}
|\fpl w_{\lambda}|\le C\lambda&\text{in $\tilde B_{1}$}\\
w_{\lambda}\le  (1-\lambda)\ds&\text{in $D_{1/4}$}\\
w_{\lambda}\ge \ds &\text{in $D_{1}^{c}$}
\end{cases}
\]
(in order to prove the weak Harnack inequality for subsolutions \eqref{intup}), the previous construction fails. Indeed, when $\lambda>1$, $w_{\lambda}/\ds$ must change sign near $\partial\Omega\cap (D_{1}\setminus D_{1/4})$ and, even in the case of a half-space, we lack explicit examples of functions with bounded $\fpl$ having such behaviour. To get around this difficulty we employ an abstract construction chiefly based on the Lewy-Stampacchia inequality, building an upper barrier which solves a double obstacle problem. This ensures that, for large excess, the solution $u$ is nonpositive in $D_{1/2}$, and now the torsion function argument applies providing the desired bounds. 
\vskip2pt
\noindent
Finally, we localize  \eqref{intlow} and \eqref{intup}, requiring the pointwise bounds to hold only in $D_{2}$. This is done by looking at the truncations of $u$ below or above $\ds$ and, due to the nonlocality of the operator, it produces additional non-homogeneous terms (usually called {\em tails} in the literature) which in the case $p\ge 2$ are quite delicate to care of (see Remark \ref{rem00} in this respect). Having the local version of the weak Harnack inequality finally gives the desired H\"older continuity through well known techniques, originally developed in \cite{RS} for the linear case.
\vskip2pt
\noindent
{\bf Notation.} Throughout the paper, dependence on $N$, $p$, $s$ will often be omitted. Positive constants will be denoted by $C_1,C_2,\ldots$ When measurable functions are involved, the expression 'in $\Omega$' will always mean 'a.e.\ in $\Omega$' (and similar). We will regularly set $a^{p-1}=|a|^{p-2}a$ for all $a\in\R$. The positive order cone of a function space $X$ is denoted $X_+$. For all function $f$, we denote by $f_+$ its positive part. Functions defined in a domain $U\subset\R^N$ will be identified with their extensions to $\R^N$ vanishing in $U^c$. The minimum (resp.\ maximum) of two functions $f$, $g$ is denoted by $f\wedge g$ (resp.\ $f\vee g$). Though our main theorem is only proved for $p\ge 2$, all the intermediate results will, unless otherwise stated, hold for any $p>1$.

\section{Preliminaries}\label{sec2}

\noindent
As we said in Section \ref{sec1}, $\Omega\subset\R^N$ will always be a bounded domain with a $C^{1,1}$ boundary $\partial\Omega$.  For all $x\in\R^N$ and $R>0$ we set
\[B_R(x)=\big\{y\in\R^N:\,|x-y|<R\big\}, \quad D_R(x)=B_R(x)\cap\Omega\]
(we omit the $x$-dependence if $x=0$, i.e., we set $B_R(0)=B_R$, $D_R(0)=D_R$). We define a distance function by setting for all $x\in\R^N$
\[{\rm d}_\Omega(x)=\inf_{y\in\Omega^c}\,|x-y|.\]
Clearly ${\rm d}_\Omega:\R^N\to\R_+$ is $1$-Lipschitz continuous. By the $C^{1,1}$-regularity of $\partial\Omega$, $\Omega$ has the {\em interior sphere property}, namely there exists $R>0$ s.t.\ for all $x\in\partial\Omega$ we can find $y\in\Omega$ s.t.\ $B_{2R}(y)\subseteq\Omega$ is tangent to $\partial\Omega$ at $x$ (in some result this weaker property alone will suffice). We denote by $\rho>0$ the supremum of such $R$'s i.e.
\beq
\label{grho}
\rho=\rho(\Omega)=\sup\{R: \forall\, x\in \partial\Omega \ \exists\,  B_{2R}\subseteq \Omega \text{ s.t. } x\in \partial B_{2R}\}>0
\eeq
and define the neighborhood of  $\partial\Omega$ by setting
\[\Omega_\rho=\big\{x\in\Omega:\,{\rm d}_\Omega(x)<\rho\big\}.\]
By the choice of $\rho$, the metric projection $\Pi_\Omega:\Omega_\rho\to\partial\Omega$ is well defined and is $C^{1,1}$ if $\partial\Omega$ is $C^{1,1}$. Moreover, (see figure \ref{fig1}) for all $x\in\partial\Omega$ and $R\in\ ]0,\rho[$ there exists a ball $\tilde B_{x,R}$ of radius $R/4$ s.t.
\beq\label{bt}
\tilde B_{x,R}\subset D_{2R}(x)\setminus D_{3R/2}(x), \quad \inf_{y\in\tilde B_{x,R}}{\rm d}_\Omega(y)\ge\frac{3R}{2}.
\eeq

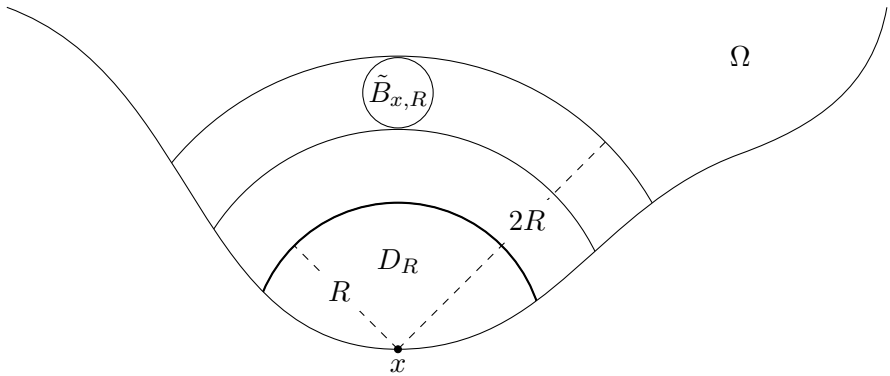
\begin{figure}
\centering
\begin{tikzpicture}[scale=1.3]
\filldraw (0, 0) circle (1pt);
\draw (0, 0) node[below]{$x$};
\draw[clip] (-4,3.5) to [out=-20, in=180] (0, 0) to [out=0, in=200] (3.5,2) to [out=20, in=-100] (5, 3.5);
\draw[thin] (0,0) circle (2.25);
\draw (3.5,3) node{$\Omega$};
\draw (0, 2.625) node{$\tilde{B}_{x, R}$};
\draw (0, 2.625) circle (0.36);
\draw (0, 0.9) node{$D_{R}$};
\draw[clip] (0, 0) circle (3);
\draw[dashed] (0, 0) -- (4,4);
\draw (1.32,1.32) node[fill=white]{$2R$};
\draw[thick] (0,0) circle (1.5);
\draw[clip] (0,0) circle (1.5);
\draw[dashed] (0, 0) -- (-2, 2);
\draw (-0.6, 0.6) node[fill=white]{$R$};
\end{tikzpicture}
\caption{The ball $\tilde{B}_{x, R}$, with center in the normal direction.}
\label{fig1}
\end{figure}

\noindent
We recall now the definitions of the main function spaces that we shall use in this paper. For all measurable $u:\R^N\to\R$ we set
\[[u]_{s,p}^p=\iint_{\R^N\times\R^N}\frac{|u(x)-u(y)|^p}{|x-y|^{N+ps}}\,dx\,dy,\]
and we define the fractional Sobolev space
\[W^{s,p}(\R^N)=\big\{u\in L^p(\R^N):\,[u]_{s,p}<\infty\big\},\]
which is a Banach space with respect to the norm $\|u\|_{s,p}=[u]_{s,p}+\|u\|_{L^{p}(\R^{N})}$, with $C^{\infty}_{c}(\R^{N})$ as a dense subspace. We also set
\[\w=\big\{u\in W^{s,p}(\R^N):\,u=0 \quad \text{in $\Omega^c$}\big\}\]
(equivalent to the definition given in Section \ref{sec1}), the latter being a uniformly convex, separable Banach space with the norm $[u]_{s,p}$. The dual space of $\w$ is denoted by $W^{-s,p'}(\Omega)$. We will also use the following function space:
\[\widetilde{W}^{s,p}(\Omega)=\Big\{u\in L^p_{\rm loc}(\R^N):\,\text{ $\exists\ \Omega'\Supset\Omega$ s.t.\ $u\in W^{s,p}(\Omega')$ and} \ \int_{\R^N}\frac{|u(x)|^{p-1}}{(1+|x|)^{N+ps}}\,dx<\infty\Big\}.\]
Such space plays an important r\^ole in the study of our problem, since by \cite[Lemma 2.3]{IMS1} for all $u\in\widetilde{W}^{s,p}(\Omega)$ we have $\fpl u\in W^{-s,p'}(\Omega)$. We also set, for any open subset $U\subset\Omega$,
\[\widetilde{W}^{s,p}_0(U)=\big\{u\in\widetilde{W}^{s,p}(U): \ u(x)=0 \ \text{in $\Omega^c$}\big\}\]
(note that $u$ does not necessarily vanish in all of $U^c$). We define a notion of {\em nonlocal tail} (slightly different from that introduced in \cite{DKP}) by setting for all measurable $u:\R^N\to\R$, $R>0$, and $q\ge 1$
\beq\label{tail}
{\rm tail}_q(u,R)=\Big[\int_{\Omega\cap B^c_R}\frac{|u(x)|^q}{|x|^{N+s}}\,dx\Big]^\frac{1}{q}.
\eeq
All equations and inequalities involving $\fpl$ are meant in the {\em weak} sense, unless explicitly stated: e.g., for any $u\in\w$ and $f\in L^\infty(\Omega)$, we say that $\fpl u=f$ in $\Omega$, if 
\[\iint_{\R^N\times\R^N}\frac{(u(x)-u(y))^{p-1}(\varphi(x)-\varphi(y))}{|x-y|^{N+ps}}\,dx\,dy=\int_\Omega f(x)\varphi(x)\,dx\qquad \text{for all $\varphi\in\w$}.\]
Similarly, we say that $\fpl u\le f$ in $\Omega$ if for all $\varphi\in\w_+$
\[\iint_{\R^N\times\R^N}\frac{(u(x)-u(y))^{p-1}(\varphi(x)-\varphi(y))}{|x-y|^{N+ps}}\,dx\,dy\le\int_\Omega f(x)\varphi(x)\,dx.\]
\vskip2pt
\noindent
We will also use the space of $\alpha$-H\"older continuous functions
\[C^\alpha(\overline\Omega)=\big\{u\in C(\overline\Omega):\,[u]_{C^\alpha(\overline\Omega)}<\infty\big\},\]
where
\[[u]_{C^\alpha(\overline\Omega)}=\sup_{x,y\in\overline\Omega,\,x\neq y}\frac{|u(x)-u(y)|}{|x-y|^\alpha},\]
which is a Banach space endowed with the norm
\[\|u\|_{C^\alpha(\overline\Omega)}=\|u\|_{L^\infty(\Omega)}+[u]_{C^\alpha(\overline\Omega)}.\]
In the rest of the section we will list some useful technical results on solutions to \eqref{dp} type problems on several domains: {\em for simplicity, we always denote the domain by $\Omega$, but in the forthcoming sections these results will also be applied to different domains.}
\vskip2pt
\noindent
We begin with the following weak comparison principle (see \cite[Lemma 9]{LL}, \cite[Proposition 2.10]{IMS1}):

\begin{proposition}\label{comp}
{\rm (Comparison principle)} Let $u,v\in\widetilde{W}^{s,p}(\Omega)$ satisfy
\[\begin{cases}
\fpl u\le\fpl v & \text{in $\Omega$} \\
u\le v & \text{in $\Omega^c$.}
\end{cases}\]
Then $u\le v$ in $\R^N$.
\end{proposition}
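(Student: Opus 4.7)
The plan is to test the weak inequality $\fpl u-\fpl v\le 0$ against $\varphi:=(u-v)_+$ and exploit pointwise monotonicity of the nonlocal integrand to force $\varphi\equiv 0$.

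First I would verify that $\varphi\in\w_+$, so that it is an admissible test function. Since $u,v\in\widetilde{W}^{s,p}(\Omega)$, one has $u-v\in W^{s,p}(\Omega')$ for some $\Omega'\Supset\Omega$, hence $(u-v)_+\in W^{s,p}(\Omega')$ by the standard truncation lemma. The hypothesis $u\le v$ in $\Omega^c$ makes $\varphi$ vanish on the collar $\Omega'\setminus\Omega$, so its zero extension to $\R^N$ has compact support inside $\Omega$ and lies in $\w_+$.

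Writing $a=u(x)$, $a'=u(y)$, $b=v(x)$, $b'=v(y)$, subtracting the two weak formulations and testing with $\varphi$ yields
\[\mathcal I:=\iint_{\R^N\times\R^N}\frac{[(a-a')^{p-1}-(b-b')^{p-1}]\,[(a-b)_+-(a'-b')_+]}{|x-y|^{N+ps}}\,dx\,dy\le 0.\]
The core step is the pointwise claim that the integrand is nonnegative. I would split $\R^N\times\R^N$ into four cells according to the signs of $a-b$ and $a'-b'$. On $\{a\ge b,\,a'\ge b'\}$ the integrand reduces to $(X^{p-1}-Y^{p-1})(X-Y)$ with $X=a-a'$, $Y=b-b'$, which is $\ge 0$ by strict monotonicity of $t\mapsto |t|^{p-2}t$. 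On the mixed cells $\{a\ge b,\,a'<b'\}$ and $\{a<b,\,a'\ge b'\}$, the decomposition $a-a'=(a-b)+(b-b')+(b'-a')$ combined with the same monotonicity shows that the two factors carry the same sign. On $\{a<b,\,a'<b'\}$ the positive-part factor vanishes.

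Combined with $\mathcal I\le 0$, nonnegativity of the integrand forces it to vanish almost everywhere. On the mixed cell $\{a>b\}\times\{a'<b'\}$ \emph{both} factors are strictly nonzero, so a.e.\ vanishing forces $|\{u>v\}|\cdot|\{u<v\}|=0$. If $|\{u>v\}|=0$ then $\varphi=0$ a.e.\ and we are done. Otherwise $|\{u<v\}|=0$, so $u\ge v$ a.e., and then the vanishing of the integrand on the first cell together with strict monotonicity of $t\mapsto|t|^{p-2}t$ forces $u-v$ to be a.e.\ constant on $\R^N$; the boundary condition $u\le v$ on $\Omega^c$ pins the constant at $0$.

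The main bookkeeping challenge is the sign analysis on the mixed cells: one must use strict monotonicity of $t\mapsto|t|^{p-2}t$ both to detect the sign of $(a-a')^{p-1}-(b-b')^{p-1}$ and to secure strict positivity of the integrand where required in the concluding step. Beyond this, the argument is essentially algebraic, and the $\widetilde W^{s,p}(\Omega)$ framework of Section \ref{sec2} guarantees that $\mathcal I$ is absolutely convergent and the Fubini–style partition into cells is legitimate.
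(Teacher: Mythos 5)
The paper does not give its own proof of Proposition \ref{comp}, referring instead to \cite[Lemma 9]{LL} and \cite[Proposition 2.10]{IMS1}; your argument is precisely the standard one used there --- test $\fpl u - \fpl v \le 0$ against $(u-v)_+$ (after checking its admissibility in $\w_+$) and exploit strict monotonicity of $t\mapsto|t|^{p-2}t$ to get pointwise nonnegativity of the integrand --- and it is correct. One small streamlining of your conclusion: whenever $(u-v)_+(x)\neq(u-v)_+(y)$ the two bracketed factors in $\mathcal{I}$ have the same strict sign, so the integrand is strictly positive there; hence the a.e.\ vanishing forced by $\mathcal{I}\le 0$ directly makes $(u-v)_+$ a.e.\ constant, and the hypothesis $u\le v$ on the positive-measure set $\Omega^c$ pins that constant at $0$, avoiding your final case split on $|\{u>v\}|$ versus $|\{u<v\}|$.
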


\noindent
Our first result is a simple estimate on the solution to the torsion equation in a ball: for all $R>0$, we denote by $u_R\in W^{s,p}_0(B_R)$ the (unique) solution to
\beq\label{torball}
\begin{cases}
\fpl u_R=1 & \text{in $B_R$} \\
u_R=0 & \text{in $B_R^c$.}
\end{cases}
\eeq

\begin{lemma}\label{torest}
There exists $C_1=C_1(N,p,s)>1$ s.t.\ for all $R>0$, $x\in\R^N$
\[\frac{R^\frac{s}{p-1}}{C_1}{\rm d}^s_{B_R}(x)\le u_R(x)\le C_1R^\frac{s}{p-1}{\rm d}^s_{B_R}(x).\]
\end{lemma}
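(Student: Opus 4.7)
The strategy is to reduce to the unit ball by scaling and then sandwich $u_1$ between explicit multiples of the model function $\eta(x):=(1-|x|^2)_+^s$, using Proposition \ref{comp} and the $(p-1)$-homogeneity of $\fpl$.

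\emph{Scaling reduction.} A direct change of variables in the principal value integral shows that, for any $R>0$, $\fpl(u(R\,\cdot))(x)=R^{ps}\fpl u(Rx)$. Consequently, if $u_1\in W^{s,p}_0(B_1)$ solves \eqref{torball} for $R=1$, then $u(x):=R^{ps/(p-1)}u_1(x/R)$ lies in $W^{s,p}_0(B_R)$ and solves \eqref{torball}. By uniqueness $u=u_R$. Since also ${\rm d}_{B_R}(x)=R\,{\rm d}_{B_1}(x/R)$, one checks that both sides of the target double inequality scale with the common factor $R^{ps/(p-1)}$, the surplus $R^{s/(p-1)}$ in the statement coming precisely from $R^{ps/(p-1)-s}$. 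It is thus sufficient to prove
\[
\frac{1}{C_1}{\rm d}^s_{B_1}(x)\le u_1(x)\le C_1\,{\rm d}^s_{B_1}(x)\qquad \text{for all }x\in\R^N,
\]
with $C_1=C_1(N,p,s)>1$.

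\emph{Barriers and comparison.} The crux is the two-sided estimate
\[
0<a\le \fpl \eta(x)\le b\qquad \text{for all }x\in B_1,
\]
with $a,b$ depending only on $N,p,s$. Once this is in hand, the $(p-1)$-homogeneity of $\fpl$ gives $\fpl\bigl(a^{-1/(p-1)}\eta\bigr)\ge 1\ge \fpl\bigl(b^{-1/(p-1)}\eta\bigr)$ in $B_1$, while both barriers vanish in $B_1^c$ exactly as $u_1$ does. Two applications of Proposition \ref{comp} then yield
\[
b^{-1/(p-1)}\,\eta\le u_1\le a^{-1/(p-1)}\,\eta \qquad \text{in }\R^N.
\]
Since $(1-|x|^2)=(1-|x|)(1+|x|)$ with $1\le 1+|x|\le 2$ on $B_1$, we have ${\rm d}_{B_1}(x)\le (1-|x|^2)_+\le 2\,{\rm d}_{B_1}(x)$; raising to the power $s$ and plugging into the above sandwich concludes the proof with, e.g., $C_1=\max\{b^{1/(p-1)},\,2^s a^{-1/(p-1)}\}$.

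\emph{Main obstacle.} The genuine technical work is the two-sided bound on $\fpl \eta$. By radial symmetry $\fpl \eta$ is a continuous function of $r=|x|\in[0,1)$, so the question reduces to ruling out blow-up and degeneracy as $r\to 1^-$. The function $\eta$ is $C^\infty$ on $B_1$ and factors near $\partial B_1$ as $\eta(x)=(1+|x|)^s\,{\rm d}_{B_1}(x)^s$, with $(1+|x|)^s$ smooth and bounded away from zero on $\overline{B_1}$. Splitting the defining integral into a near-field region (handled by the local $C^{1,1}$ regularity of $\eta$ in compact subsets of $B_1$, together with the classical fact that $\fpl{\rm d}^s$ is bounded in a neighbourhood of the boundary of a $C^{1,1}$ domain, cf.\ the barrier computations in \cite{IMS1}) and a far-field region (trivially integrable against $|x-y|^{-N-ps}$ because $\eta\in L^\infty$), one obtains the upper bound $b$. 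The lower bound $a>0$ follows from the fact that the contribution of the region $\{y\in B_1^c\}$ to $\fpl\eta(x)$ is strictly positive, uniformly in $x\in B_1$, and dominates the (bounded) near-field contribution; a separate compactness argument on $\{|x|\le r_0\}$ and the boundary asymptotics for $r_0\le|x|<1$ then deliver a uniform positive lower bound. In the linear case $p=2$ this step is, of course, the classical explicit identity $(-\Delta)^s\eta\equiv c_{N,s}$, so that $\eta$ itself is a constant multiple of the torsion function.
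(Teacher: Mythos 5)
Your scaling reduction and the overall barrier-plus-comparison template match the paper, but there is a genuine gap at the ``main obstacle'' step, and the heuristic you give for closing it is wrong. You need the two-sided bound $0<a\le\fpl\eta\le b$ on all of $B_1$. The lower bound $\fpl\eta\ge a>0$ (which is what makes $a^{-1/(p-1)}\eta$ a supersolution, hence gives the upper bound $u_1\le C{\rm d}_{B_1}^s$) is not established, and the mechanism you invoke does not work: for $x$ near $\partial B_1$, the far-field contribution $\int_{B_1^c}\eta(x)^{p-1}|x-y|^{-N-ps}\,dy$ is of order ${\rm d}_{B_1}(x)^{-s}$ and so is the near-field contribution, of the opposite sign; the boundedness of $\fpl\eta$ near $\partial B_1$ (the content of \cite[Theorem 3.6]{IMS1}) results from a delicate cancellation between the two, not from a bounded near-field term being dominated by a positive far-field term. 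In particular, nothing in your argument rules out that $\fpl\eta$ is zero or negative near $\partial B_1$ when $p\neq 2$; for a half-space $\fpl{\rm d}^s\equiv 0$, and the sign for a ball is a curvature effect that has to be argued, not asserted. The identity $(-\Delta)^s\eta\equiv c_{N,s}$ is special to $p=2$ and does not transfer.

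The paper sidesteps exactly this. For the upper bound it simply cites the global barrier estimate \cite[Theorem 4.4]{IMS1}, which gives $u_1\le C{\rm d}_{B_1}^s$ directly. For the lower bound it never needs the sign of $\fpl{\rm d}_{B_1}^s$: by the strong maximum principle $u_1>0$ in $B_1$, hence $m:=\inf_{B_r}u_1>0$ for a fixed $r<1$; by \cite[Theorem 3.6]{IMS1} one has only $|\fpl{\rm d}_{B_1}^s|\le M$ on the annulus $B_1\setminus\overline B_r$; one then sets $w=\min\{m,M^{-1/(p-1)}\}{\rm d}_{B_1}^s$ and applies the comparison principle on the annulus alone, where $\fpl w\le M^{-1}\cdot M=1=\fpl u_1$ and $w\le u_1$ on the complement of the annulus (outside $B_1$ both vanish; inside $B_r$ one has $w\le m\le u_1$). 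Restricting the comparison to the annulus is precisely what lets a two-sided bound on $|\fpl{\rm d}_{B_1}^s|$, rather than a one-sided positive bound, do the job. If you want to pursue your route, you would have to actually prove $\inf_{B_1}\fpl\eta>0$, which is an additional nontrivial fact not contained in the cited references.
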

\begin{proof}
First assume $R=1$. By the strong maximum principle (see \cite[Lemma 2.3]{MS}), we have $u_1>0$ in $B_1$, while by \cite[Theorem 4.4]{IMS1} there exists $C>0$ s.t.
\beq\label{torest1}
u_1\le C{\rm d}_{B_1}^s \quad \text{in $\R^N$}.
\eeq
By \cite[Theorem 3.6]{IMS1} we can find $r\in\ ]0,1[$, $M>0$ s.t.\ $|\fpl{\rm d}_{B_1}^s|\le M$ in $B_1\setminus\overline B_r$. Set $m=\inf_{B_r}u_1>0$ and for all $x\in\R^N$
\[w(x)=\min\big\{m,M^{-\frac{1}{p-1}}\big\}{\rm d}_{B_1}^s(x).\]
Then we have
\[\begin{cases}
\fpl w\le\fpl u_1 & \text{in $B_1\setminus\overline B_r$} \\
w\le u_1 & \text{in $(B_1\setminus\overline B_r)^c$.}
\end{cases}\]
Proposition \ref{comp} yields $w\le u_1$ in $\R^N$. So, for $C_1$ even bigger if necessary in \eqref{torest1}, we improve to
\beq\label{torest2}
\frac{{\rm d}_{B_1}^s}{C_1}\le u_1\le C_1{\rm d}_{B_1}^s \quad \text{in $\R^N$}.
\eeq
Now take an arbitrary $R>0$ and set for all $x\in\R^N$
\[v(x)=\frac{u_R(Rx)}{R^{p's}}.\]
Then $v\in W^{s,p}_0(B_1)$ and by the homogeneity and scaling properties of $\fpl$ (see \cite[Proposition 2.9 $(i)$ $(ii)$]{IMS1}) we have
\[\begin{cases}
\fpl v=1 & \text{in $B_1$} \\
v=0 & \text{in $B_1^c$.}
\end{cases}\]
By uniqueness $v=u_1$. Since ${\rm d}_{B_R}(Rx)=R{\rm d}_{B_1}(x)$, by \eqref{torest2} we have for all $x\in\R^N$
\[\frac{{\rm d}_{B_R}^s(Rx)}{C_1R^s}\le\frac{u_R(Rx)}{R^{p's}}\le\frac{C_1{\rm d}_{B_R}^s(Rx)}{R^s},\]
hence the conclusion. 
\end{proof}

\noindent
The previous estimate allows us to use $u_R$ as a barrier to prove a Hopf type lemma for the torsion equation in a general domain:
\beq\label{torsion}
\begin{cases}
\fpl u=1 & \text{in $\Omega$} \\
u=0 & \text{in $\Omega^c$}.
\end{cases}
\eeq

\begin{lemma}\label{hopf}
{\rm (Hopf's lemma)} Let $u\in\w$ solve \eqref{torsion} and $\Omega$ satisfy the interior sphere property \eqref{grho}. Then  
\[u(x)\ge C_1\rho^\frac{s}{p-1}\ds(x)\qquad \text{for all $x\in\R^N$},\]
where  $C_{1}=C_{1}(N, p, s)>1$ is given in the previous Lemma.
\end{lemma}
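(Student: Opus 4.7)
The strategy is to use a translated torsion function as a lower barrier for $u$, and then combine the comparison principle (Proposition~\ref{comp}) with the pointwise estimate of Lemma~\ref{torest}. As a preliminary step I would note that $u\ge 0$ on $\R^{N}$, which follows by applying Proposition~\ref{comp} to $v\equiv 0$ and $u$ (since $\fpl 0=0\le 1=\fpl u$ in $\Omega$ and $u=v=0$ on $\Omega^{c}$). The target inequality is trivial on $\Omega^{c}$, so the whole argument will focus on a fixed $x_{0}\in\Omega$ and split into two regimes according to whether $\ds(x_{0})\ge\rho$ or $\ds(x_{0})<\rho$.

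In the deep-interior regime $\ds(x_{0})\ge\rho$, I would take $R=\ds(x_{0})$, so that $B_{R}(x_{0})\subset\Omega$, and use $v(x):=u_{R}(x-x_{0})$ as a barrier. Since $\fpl v=1=\fpl u$ in $B_{R}(x_{0})$ and $v=0\le u$ on $B_{R}(x_{0})^{c}$, Proposition~\ref{comp} yields $u\ge v$ on $\R^{N}$. Evaluating Lemma~\ref{torest} at $x_{0}$ gives $v(x_{0})\ge R^{s/(p-1)+s}/C_{1}$, which, using $R\ge\rho$ and $s/(p-1)\ge 0$, is bounded below by $\rho^{s/(p-1)}\,\ds^{s}(x_{0})/C_{1}$, as required.

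In the near-boundary regime $\ds(x_{0})<\rho$, I would pick $z_{0}\in\partial\Omega$ with $|x_{0}-z_{0}|=\ds(x_{0})$ and invoke \eqref{grho} to produce $y\in\Omega$ with $B_{2\rho}(y)\subset\Omega$ and $z_{0}\in\partial B_{2\rho}(y)$. Under the standing $C^{1,1}$-regularity of $\partial\Omega$ the inner normal at $z_{0}$ is well defined and coincides both with $(y-z_{0})/(2\rho)$ and with $(x_{0}-z_{0})/\ds(x_{0})$, forcing the alignment
\[x_{0}=z_{0}+\ds(x_{0})\,\frac{y-z_{0}}{2\rho},\qquad {\rm d}_{B_{2\rho}(y)}(x_{0})=2\rho-|x_{0}-y|=\ds(x_{0}).\]
Repeating the barrier/comparison argument with $v(x)=u_{2\rho}(x-y)$ and again invoking Lemma~\ref{torest} would then give
\[u(x_{0})\ge\frac{(2\rho)^{s/(p-1)}}{C_{1}}\,\ds^{s}(x_{0})\ge\frac{\rho^{s/(p-1)}}{C_{1}}\,\ds^{s}(x_{0}).\]

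The main obstacle is the geometric identity ${\rm d}_{B_{2\rho}(y)}(x_{0})=\ds(x_{0})$ in the near-boundary regime, which rests on $x_{0}$ lying on the radius from $y$ to the tangent point $z_{0}$. Under $C^{1,1}$ regularity this is the familiar alignment of the distance vector with the inner normal, but in a pure interior-sphere setting (which is all \eqref{grho} guarantees) it would require a separate argument, e.g.\ uniqueness of $\Pi_{\partial\Omega}(x_{0})$ when $\ds(x_{0})<\rho$, or existence of a supporting hyperplane for $\Omega^{c}$ at each boundary point. Once this geometric fact is secured, everything else is a direct superposition of Proposition~\ref{comp} and Lemma~\ref{torest}.
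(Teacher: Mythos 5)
Your proof is correct and follows essentially the same route as the paper's: for $x_{0}$ near the boundary you compare $u$ with the torsion function of a tangent interior ball of radius $2\rho$ realising $\ds(x_{0})$, and for $x_{0}$ deep inside you compare with the torsion function of $B_{\ds(x_{0})}(x_{0})$, invoking Proposition~\ref{comp} and Lemma~\ref{torest} each time. The only differences are cosmetic: you make explicit the geometric alignment $x_{0}=z_{0}+\ds(x_{0})(y-z_{0})/(2\rho)$ (and correctly note it rests on the standing $C^{1,1}$ assumption, or at least on uniqueness of the metric projection, which the paper has already set up via its remark that $\Pi_\Omega$ is well defined on $\Omega_\rho$), and you observe the preliminary $u\ge 0$, which the paper does not need to state separately. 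Note also that the constant your argument (and the paper's) actually produces is $\rho^{s/(p-1)}/C_{1}$, not $C_{1}\rho^{s/(p-1)}$; the statement as printed has a typo, as is confirmed by the way the lemma is invoked later in the paper.
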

\begin{proof}
First, fix $x\in\Omega_\rho$. Then we can find a ball $B\subseteq\Omega$ of radius $2\rho$, tangent to $\partial\Omega$ at $\Pi_\Omega(x)$ and such that ${\rm d}_{\Omega}(x)={\rm d}_{B}(x)$. Let $v\in W^{s,p}_0(B)$ solve
\[\begin{cases}
\fpl v=1 & \text{in $B$} \\
v=0 & \text{in $B^c$.}
\end{cases}\]
So we have
\[\begin{cases}
\fpl v\le\fpl u & \text{in $B$} \\
v\le u & \text{in $B^c$.}
\end{cases}\]
By Proposition \ref{comp} we have $v\le u$ in $\R^N$. By Lemma \ref{torest} and  ${\rm d}_\Omega(x)={\rm d}_B(x)$, we infer
\beq\label{hopf1}
u(x)\ge v(x)\ge\frac{(2\rho)^\frac{s}{p-1}}{C_1}\ds(x).
\eeq
Now assume $x\in\Omega\setminus\overline\Omega_\rho$, and set $R={\rm d}_\Omega(x)\ge\rho$. The ball $B'=B_{R}(x)$ is contianed in $\Omega$ and  ${\rm d}_{B'}(x)=R={\rm d}_{\Omega}(x)$. Considering the torsion function $v'$ of $B'$ and applying Proposition \ref{comp}, we deduce through Lemma \ref{torest}
\beq\label{hopf2}
u(x)\ge v'(x)\ge \frac{R^\frac{s}{p-1}}{C_1}{\rm d}_{B'}^s(x)=\frac{R^\frac{s}{p-1}}{C_1}R^{s}\ge\frac{\rho^\frac{s}{p-1}}{C_1} \ds.
\eeq
From \eqref{hopf1} and \eqref{hopf2} we conclude.
\end{proof}

\noindent
Another property of problem \eqref{torsion} is that its solution is a subsolution all over $\R^N$:

\begin{lemma}\label{subglobal}
Let $\Omega\subseteq\R^{N}$ be bounded and $u\in\w$ solve \eqref{torsion}. Then $\fpl u\le 1$ in $\R^N$.
\end{lemma}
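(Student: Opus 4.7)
The plan rests on the non-negativity of $u$ and a direct pointwise computation in $\Omega^c$. First, applying Proposition \ref{comp} to the pair $(0,u)$ — since $\fpl 0=0\le 1=\fpl u$ weakly in $\Omega$ and $0=u$ in $\Omega^c$ — yields $u\ge 0$ throughout $\R^N$. Since $u\equiv 0$ on $\Omega^c$, for every $x\in\Omega^c$ the integrand defining $\fpl u(x)$ has no singularity (the would-be singularity at $y=x$ is neutralised by $u(x)=u(y)=0$), and the Lebesgue integral
\[\fpl u(x)=-2\int_\Omega\frac{u(y)^{p-1}}{|x-y|^{N+ps}}\,dy\le 0\le 1\]
is well defined. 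Together with the weak equation $\fpl u=1$ in $\Omega$, this already gives $\fpl u\le 1$ both pointwise on $\Omega^c$ and weakly in $\Omega$.

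To assemble this into one weak inequality on any bounded open $U\supseteq\overline\Omega$, I would fix $\varphi\in W^{s,p}_0(U)_+$ and split $\R^N\times\R^N$ into the four quadrants determined by $\Omega$ and $\Omega^c$. The $\Omega^c\times\Omega^c$ piece vanishes because $u\equiv 0$ there, while in each cross piece $(u(x)-u(y))^{p-1}=u(x)^{p-1}\ge 0$. A symmetrisation plus Fubini then yields the identity
\[\iint_{\R^N\times\R^N}\!\frac{(u(x)-u(y))^{p-1}(\varphi(x)-\varphi(y))}{|x-y|^{N+ps}}\,dx\,dy=\int_\Omega\varphi(x)\,dx+\int_{U\setminus\Omega}(\fpl u)_{\mathrm{pw}}(x)\,\varphi(x)\,dx,\]
whose second term is $\le 0$ by the pointwise bound above, so the left-hand side is $\le\int_\Omega\varphi\le\int_U\varphi$: this is exactly $\fpl u\le 1$ weakly in $U$.

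The only subtle step is the justification of the displayed Fubini identity for a general $\varphi\in W^{s,p}_0(U)_+$. It is immediate on the subspace of $\varphi\in C^\infty_c(U)$ vanishing on a neighbourhood of $\partial\Omega$, since then $\varphi\chi_\Omega\in \w_+$ is admissible in the weak equation for $u$ while the complement $\varphi\chi_{U\setminus\overline\Omega}$ is handled by a direct computation using $u\equiv 0$ outside $\Omega$. For general $\varphi$, both sides of the identity define continuous linear functionals on $W^{s,p}_0(U)$ (using $u\in L^\infty(\R^N)$ and the local $L^1$-bound $|(\fpl u)_{\mathrm{pw}}(x)|\le C\,\mathrm{dist}(x,\partial\Omega)^{-ps}$ on $U\setminus\Omega$), and one concludes by density of $C^\infty_c(U)$. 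I expect this density/continuity step to be the only real technicality in the argument.
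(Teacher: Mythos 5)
The opening moves are fine: comparison with the zero function gives $u\ge 0$, and for $x\notin\overline\Omega$ the pointwise formula indeed reduces to $-2\int_\Omega u(y)^{p-1}|x-y|^{-N-ps}\,dy\le 0$. The splitting of $\mathcal{E}(u,\varphi)$ into the quadrant contributions, together with the weak equation in $\Omega$, gives the displayed identity as long as $\varphi\chi_\Omega$ is a legitimate test function — that is, for $\varphi$ vanishing near $\partial\Omega$, as you observe.

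The gap is in the density step, which is not a technicality but the crux, and in fact your proposed fix fails for half of the parameter range. When $sp\ge 1$ the hypersurface $\partial\Omega$ has positive $(s,p)$-capacity, so smooth functions vanishing in a neighbourhood of $\partial\Omega$ are \emph{not} dense in $W^{s,p}_0(U)$: the closure of $C^\infty_c(U\setminus\partial\Omega)$ is the proper subspace of functions with zero trace on $\partial\Omega$. Concretely, a cutoff $\theta_n$ equal to $1$ on $\{\,\mathrm{d}_\Omega<1/n\,\}$, supported in $\{\,\mathrm{d}_\Omega<2/n\,\}$ and Lipschitz with constant $n$ satisfies $[\theta_n]_{s,p}^p\sim n^{sp-1}$, which blows up for $sp>1$; so you cannot peel off a boundary collar. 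Independently, the bound you invoke, $|(\fpl u)_{\rm pw}(x)|\le C\,\mathrm{dist}(x,\partial\Omega)^{-ps}$, is not locally integrable near $\partial\Omega$ when $ps\ge 1$, so as written the right-hand side of your identity does not even define a functional on $W^{s,p}_0(U)$. (This particular defect could be repaired by using the sharper decay $u\lesssim\ds$, which improves the bound to $\mathrm{dist}(x,\partial\Omega)^{-s}\in L^1_{\rm loc}$; but the density obstruction above would remain.) Note also that the lemma is stated for an arbitrary bounded $\Omega$, with no boundary regularity, which makes any boundary-collar argument even more delicate.

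The paper sidesteps all of this with a purely variational argument that never touches $\partial\Omega$: it shows that $u$ coincides with the unique minimiser of $J_1(v)=\tfrac1p[v]_{s,p}^p-\int_{\R^N}v$ over the convex set $\{v\le u\}\cap W^{s,p}(\R^N)\cap L^1(\R^N)$ (using $v\mapsto v_+$ to force nonnegativity and hence $v_0\in\w$, then comparing with $u$ as a competitor), and reads off $\langle J_1'(u),\varphi\rangle\le 0$ for all $\varphi\in C^\infty_c(\R^N)_+$ directly from the variational inequality. That argument is uniform in $s,p$, needs no pointwise representation of $\fpl u$ in $\Omega^c$, no $L^\infty$ bound on $u$, and no capacity considerations. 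I would either adopt that route, or, if you want to salvage the direct computation, restrict your claim to $sp<1$ and prove the needed density there explicitly.
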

\begin{proof}
Set for all $v\in W^{s,p}(\R^N)\cap L^1(\R^N)$
\[J_1(v)=\frac{[v]_{s,p}^p}{p}-\int_{\R^N} v(x)\,dx,\]
and consider the minimization problem
\[\min_{v\le u}J_1(v).\]
By strict convexity and coercivity, it admits a unique solution $v_0\in W^{s,p}(\R^N)\cap L^1(\R^N)$. Since $J_1(v_+)\le J_1(v)$ for all admissible $v$, we have $v_0\ge 0$ in $\R^N$. Moreover, $v_0\le u=0$ in $\Omega^c$, so $v_0\in\w$ and it is readily seen that $v_{0}$ solves \eqref{torsion}. Recalling that $u$ is the only weak solution of \eqref{torsion}, we conclude that $v_0=u$. From the minimality property of $v_{0}$, we deduce
\[\langle J'_{1}(v_{0}), v-u\rangle\ge 0\quad \text{for all $v\le u$, $v\in W^{s,p}(\R^N)\cap L^1(\R^N)$},\]
and setting $v=u-\varphi$, we get
\[\langle J'_1(u),\varphi\rangle\le 0\quad \text{for all $\varphi\in C^{\infty}_{c}(\R^{N})$, $\varphi\ge 0$},\]
i.e., $\fpl u\le 1$ in all of $\R^N$.
\end{proof}

\noindent
We introduce a partial ordering on the dual space $W^{-s,p'}(\Omega)$ by defining the positive cone
\[W^{-s,p'}(\Omega)_+=\big\{L\in W^{-s,p'}(\Omega):\,\langle L,\varphi\rangle\ge 0 \ \text{for all $\varphi\in\w_+$}\big\}.\]
By the Riesz theorem and the density of $C^\infty_c(\Omega)$ in $\w$, any $L\in W^{-s,p'}(\Omega)_+$ can be faithfully represented as a (positive) Radon measure on $\Omega$ (see the discussion in \cite[p.\ 265]{GM}). Then, the {\em order dual} of $w$ is defined as
\[W^{-s,p'}_\le(\Omega)=\big\{L_1-L_2:\, L_1,L_2\in W^{-s,p'}(\Omega)_+\big\}.\]
Such space inherits a lattice structure defined by duality through the lattice structure of $\w$, as shown in \cite[p.\ 260]{GM}. We now give a slight generalization of the Lewy-Stampacchia type inequality \cite[Theorem 2.4]{GM} which is needed to treat double obstacle problems with obstacle not lying in $W^{s,p}_{0}(\Omega)$. The proof is well known and we describe it for sake of completeness, specializing to the case of the operator $\fpl$.

\begin{lemma}\label{dobstacle}
{\rm (Lewy-Stampacchia)} Let $\Omega\subseteq \R^{N}$ be bounded, $\varphi, \psi\in W^{s,p}_{\rm loc}(\R^{N})$ be s.t.\ 
\begin{enumroman}
\item
\label{order1}
$\fpl\varphi, \fpl \psi\in W^{-s, p'}_{\le}(\Omega)$ 
\item
\label{order2}
$[\varphi, \psi]:=\big\{v\in W^{s,p}_{0}(\Omega): \varphi\le v\le \psi\big\}\neq \emptyset$
\end{enumroman}
Then there exists a unique solution $u\in W^{s, p}_{0}(\Omega)$ to the problem
\[\min_{v\in [\varphi, \psi]}\frac{[v]^{p}_{s,p}}{p},\]
and it satisfies
\[0\wedge\fpl\psi\le\fpl u\le 0\vee\fpl\varphi \quad \text{in $\Omega$}.\]
\end{lemma}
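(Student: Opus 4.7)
The strategy is to run the classical Lewy-Stampacchia argument adapted to our nonlocal nonlinear setting, following \cite{GM} but handling obstacles lying only in $W^{s,p}_{\rm loc}(\R^N)$. Existence and uniqueness of $u$ are standard: the Gagliardo energy $J(v)=[v]_{s,p}^p/p$ is strictly convex, coercive and weakly lower semicontinuous on $\w$, while hypothesis \ref{order2} makes $K:=[\varphi,\psi]$ a nonempty closed convex subset. A unique minimizer $u\in K$ thus exists, characterized by the variational inequality $\langle\fpl u,v-u\rangle\ge 0$ for every $v\in K$.

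For the upper bound $\fpl u\le 0\vee\fpl\varphi$, the plan is to introduce $T:=0\vee\fpl\varphi\in W^{-s,p'}_\le(\Omega)$, well-defined thanks to the lattice structure on the order dual recalled just above, and to consider the auxiliary single upper-obstacle problem
\[\min\bigl\{J(v)-\langle T,v\rangle:\,v\in\w,\ v\le\psi\bigr\}.\]
Its unique minimizer $\bar u$ satisfies $\langle\fpl\bar u-T,v-\bar u\rangle\ge 0$ for all admissible $v$. Testing with $v=\bar u-\eta$, $\eta\in\w_+$, gives $\fpl\bar u\le T$ in $\Omega$; testing with $v=\bar u+\eta$ for $\eta\in\w_+$ compactly supported where $\bar u<\psi$ gives $\fpl\bar u=T$ on the noncoincidence set. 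The central claim is then $\bar u\ge\varphi$; together with $u\le\psi$ this makes $u$ and $\bar u$ admissible for each other's problem, and pairing the two variational inequalities while invoking the strict monotonicity of $\fpl$ forces $u=\bar u$, whence $\fpl u\le T$ in $\Omega$. The lower bound $\fpl u\ge 0\wedge\fpl\psi$ is obtained by the mirror argument, using the lower-obstacle problem with forcing $S:=0\wedge\fpl\psi$ and competitor $\bar u\wedge\psi$.

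The main obstacle is the comparison $\bar u\ge\varphi$. I plan to prove it by contradiction on the set $E:=\{\bar u<\varphi\}$, which is contained in $\{\bar u<\psi\}$ since $\varphi\le\psi$ in $\Omega$: on $E$ one then has $\fpl\bar u=T\ge\fpl\varphi$, while on $E^c$ trivially $\bar u\ge\varphi$. Applying the comparison principle of Proposition \ref{comp} on the bounded open set $E$ with the pair $(\varphi,\bar u)$, both of which lie in $\widetilde W^{s,p}(E)$ thanks to the $W^{s,p}_{\rm loc}$-regularity of $\varphi$ and the tail control inherited from $\bar u\in\w$, will produce $\varphi\le\bar u$ throughout $\R^N$, contradicting $E\neq\emptyset$. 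The delicate technical point is that all orderings involving $\fpl\varphi$ and $T$ must be interpreted through the $W^{-s,p'}_\le(\Omega)$ pairing rather than pointwise, which is exactly why hypothesis \ref{order1} is phrased in terms of the order dual; once this framework is in place, everything else reduces to routine convex-analysis manipulations.
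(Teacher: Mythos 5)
Your overall strategy — reducing to a single‑obstacle problem with forcing $T=0\vee\fpl\varphi$ and invoking a comparison against $\varphi$ — is in the right spirit and close to the Gigli--Mosconi framework the paper actually uses. However, there are two genuine gaps, one of which is fatal.

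\textbf{The ceiling must be $u$, not $\psi$.} You take the auxiliary problem
\[\min\bigl\{J(v)-\langle T,v\rangle:\,v\in\w,\ v\le\psi\bigr\}\]
and claim that once $\bar u\ge\varphi$ is established, pairing the two variational inequalities forces $u=\bar u$. This is false. Pairing gives
\[\langle\fpl u,\bar u-u\rangle\ge 0,\qquad \langle\fpl\bar u-T,u-\bar u\rangle\ge 0,\]
whose sum is $\langle\fpl u-\fpl\bar u,\bar u-u\rangle+\langle T,\bar u-u\rangle\ge 0$. By monotonicity the first term is $\le 0$, so all one can deduce is $\langle T,\bar u-u\rangle\ge 0$ — nothing close to $u=\bar u$. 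And indeed $u=\bar u$ fails generically: if the obstacle constraints on $u$ are both inactive then $\fpl u=0$, while if $\psi$ is large enough that the $\bar u$-constraint is inactive then $\fpl\bar u=T$, which is not $0$ once $\fpl\varphi$ is positive somewhere. The paper instead minimizes $J_2(v)=J(v)-\langle T,v\rangle$ over $(-\infty,u]=\{v\in\w:v\le u\}$, so that $w\le u$ holds \emph{by construction}; the reverse inequality $u\le w$ then follows by testing $u$'s variational inequality with $v=u\wedge w$ and $w$'s with $v=u\vee w$ (the latter is admissible precisely because the ceiling is $u$, not $\psi$), adding, and invoking strict $\mathcal T$-monotonicity together with $T\ge 0$. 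With $\psi$ as ceiling, the test function $u\vee\bar u$ is only $\le\psi$, not $\le\bar u$'s obstacle, so the cancellation that yields $0\le\langle\fpl w-\fpl u,(u-w)_+\rangle$ never happens.

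\textbf{The comparison-principle step is not in the right framework.} To get $\bar u\ge\varphi$ you work on the set $E=\{\bar u<\varphi\}$ and apply Proposition \ref{comp} there. But $E$ is only a measurable set (neither $\bar u$ nor $\varphi$ is assumed continuous), not an open domain, so the weak formulation of $\fpl$ on $E$ and the equality $\fpl\bar u=T$ ``on the noncoincidence set'' are not directly available; making the statement ``$\fpl\bar u=T$ where $\bar u<\psi$'' precise for a general constrained minimizer requires work that the comparison principle doesn't do for you. The paper avoids this entirely with a two-line variational-inequality computation: test $w$'s inequality with $v=w\vee\varphi=w+(\varphi-w)_+$ (admissible because $\varphi\le 0$ in $\Omega^c$ and, since the ceiling is $u\ge\varphi$, also $w\vee\varphi\le u$), use $T\ge\fpl\varphi$, and conclude $\varphi\le w$ from strict $\mathcal T$-monotonicity. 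You should replace your comparison-on-$E$ argument with this test-function computation — it is both simpler and actually valid in the weak setting, and together with switching the ceiling from $\psi$ to $u$ it closes both gaps.
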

\begin{proof}
The existence and uniqueness statements for the minimization problem follow from the strict convexity and coercivity of $v\mapsto [v]_{s,p}^{p}$. The function $u\in[\varphi,\psi]$ is a minimizer iff it satisfies for all $v\in [\varphi, \psi]$
\beq
\label{minu}
\langle \fpl u, v-u\rangle\ge 0.
\eeq
We prove now that
\beq\label{lsin}
\fpl u\le 0\vee\fpl\varphi \quad \text{in $\Omega$.}
\eeq
Recall from \cite[Remark 3.3 and p.\ 261]{GM} that $v\mapsto [v]_{s,p}^{p}/p$ is sub-modular and strictly convex, hence its differential $\fpl$ is a strictly ${\mathcal T}$-monotone map, i.e.
\beq
\label{tmon}
\langle \fpl u-\fpl v, (u-v)_{+}\rangle >0 \quad \text{unless $v\le u$}.
\eeq
By condition \ref{order1}, the strictly convex, coercive functional  
\[J_2:W^{s, p}_{0}(\Omega)\to \R,\quad J_2(v)=\frac{[v]_{s,p}^{p}}{p}- \langle\fpl \varphi\vee 0, v\rangle,\]
is well defined, and we thus let $w$ be the unique solution of the following problem
\[\min_{v\in\,  (\infty, u]}J_2(v),\quad (-\infty, u]:= \big\{ v\in W^{s,p}_{0}(\Omega): v\le u\big\},\]
which therefore solves for all $v\in (-\infty, u]$
\beq
\label{minw}
\langle J_2'(w), v-w\rangle\ge 0.
\eeq
We claim that $u\ge w$, and then necessarily $u=w$. Condition \ref{order2} forces $\varphi\le 0$ in $\Omega^{c}$, therefore $w\vee \varphi\in W^{s,p}_{0}(\Omega)$. Choosing $v=w\vee\varphi=w+(\varphi-w)_{+}$  gives
\[
0 \le \langle J_2'(w), (\varphi-w)_{+}\rangle = \langle \fpl w-(0\vee\fpl\varphi), (\varphi-w)_{+}\rangle \le \langle \fpl w-\fpl\varphi, (\varphi-w)_{+}\rangle.
\]
By \eqref{tmon}, this implies $\varphi\le w$ and, by $w\le u$, {\em a fortiori} $w\in [\varphi,\psi]$. Choosing $v=w\lor u=w+(u-w)_{+}$ as a test function in \eqref{minw} gives 
\[
0 \le \langle J_2'(w), (u-w)_{+}\rangle = \langle \fpl w-(0\vee\fpl\varphi),(u-w)_{+}\rangle \le \langle \fpl w, (u-w)_{+}\rangle, 
\]
while letting $v=w\land u=u-(u-w)_{+}$ in \eqref{minu}, provides
\[0\le \langle \fpl u, - (u-w)_{+}\rangle.\]
Summing up we obtain
\[0\le \langle \fpl w-\fpl u, (u-w)_{+}\rangle,\]
thus \eqref{tmon} entails $u\le w$ and therefore $w=u$. This enforces \eqref{minw} for $u$, then setting $v=u-z\in \, (-\infty, u]$ we get for all $z\in W^{s,p}_{0}(\Omega)_{+}$
\[\langle \fpl u, z\rangle\le \langle \fpl\varphi\lor 0, z\rangle,\]
proving \eqref{lsin}. The first inequality of the thesis is achieved through a similar argument.
\end{proof}

\noindent
A major tool in our proofs is the following nonlocal superposition principle:

\begin{proposition}\label{spp}
{\rm (Superposition principle)} Let $\Omega$ be bounded, $u\in\widetilde{W}^{s,p}(\Omega)$, $v\in L^1_{\rm loc}(\R^N)$, $V={\rm supp}(u-v)$ satisfy
\begin{enumroman}
\item $\Omega\Subset \R^{N}\setminus V$;
\item $\displaystyle\int_V\frac{|v(x)|^{p-1}}{(1+|x|)^{N+ps}}\,dx<\infty$.
\end{enumroman}
Set for all $x\in\R^N$
\[w(x)=\begin{cases}
u(x) & \text{if $x\in V^c$} \\
v(x) & \text{if $x\in V$.}
\end{cases}\]
Then $w\in\widetilde{W}^{s,p}(\Omega)$ and satisfies in $\Omega$
\[\fpl w(x)=\fpl u(x)+2\int_V\frac{(u(x)-v(y))^{p-1}-(u(x)-u(y))^{p-1}}{|x-y|^{N+ps}}\,dy.\]
\end{proposition}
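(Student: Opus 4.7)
The underlying intuition is that $u$ and $w$ coincide on $V^{c}$ and, in particular, agree on the open set $\Omega\Subset V^{c}$; hence for $x\in\Omega$ the difference $\fpl w(x)-\fpl u(x)$ is produced solely by the disagreement of $u$ and $v$ on $V$, where the integration kernel is regular since $\delta:=\mathrm{dist}(\Omega,V)>0$. The plan is to make this rigorous via the weak formulation, splitting the defining double integral along the partition $\{V,V^{c}\}\times\{V,V^{c}\}$ and using that any $\varphi\in C^{\infty}_{c}(\Omega)$ vanishes on $V$.

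First I would verify $w\in\widetilde W^{s,p}(\Omega)$. By (i) pick $\Omega'$ with $\Omega\Subset\Omega'\Subset\R^{N}\setminus V$; on $\Omega'$ one has $w\equiv u\in W^{s,p}(\Omega')$. The tail is split as
\[
\int_{V^{c}}\frac{|u(x)|^{p-1}}{(1+|x|)^{N+ps}}\,dx+\int_{V}\frac{|v(x)|^{p-1}}{(1+|x|)^{N+ps}}\,dx,
\]
finite by, respectively, the tail bound built into $u\in\widetilde W^{s,p}(\Omega)$ and hypothesis (ii).

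Next, for $\varphi\in C^{\infty}_{c}(\Omega)$, condition (i) forces $\varphi\equiv 0$ on $V$. Splitting the double integral defining $\langle\fpl w,\varphi\rangle$ into the four pieces $V\times V$, $V\times V^{c}$, $V^{c}\times V$, $V^{c}\times V^{c}$, the first vanishes, on $V^{c}\times V^{c}$ one has $w=u$, and the two cross pieces coincide after the swap $x\leftrightarrow y$ using $(-a)^{p-1}=-a^{p-1}$. Assembling,
\[
\langle\fpl w,\varphi\rangle=\iint_{V^{c}\times V^{c}}\frac{(u(x)-u(y))^{p-1}(\varphi(x)-\varphi(y))}{|x-y|^{N+ps}}\,dx\,dy+2\iint_{V^{c}\times V}\frac{(u(x)-v(y))^{p-1}\varphi(x)}{|x-y|^{N+ps}}\,dx\,dy.
\]
Repeating the computation for $\fpl u$ (only the last integral changes, with $u(y)$ in place of $v(y)$) and subtracting yields
\[
\langle\fpl w-\fpl u,\varphi\rangle=2\int_{\Omega}\varphi(x)\int_{V}\frac{(u(x)-v(y))^{p-1}-(u(x)-u(y))^{p-1}}{|x-y|^{N+ps}}\,dy\,dx,
\]
which is the announced identity in the sense of distributions on $\Omega$, hence pointwise a.e.

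The main obstacle is justifying Fubini and showing that the inner $y$-integral defines a locally finite function of $x\in\Omega$. On $\Omega\times V$ the kernel is bounded by $\delta^{-(N+ps)}$ locally and behaves like $|y|^{-(N+ps)}$ at infinity. To exploit the cancellation in the numerator I would use, for $p\ge 2$, the elementary estimate $|a^{p-1}-b^{p-1}|\le(p-1)(|a|^{p-2}+|b|^{p-2})|a-b|$ with $a=u(x)-v(y)$, $b=u(x)-u(y)$, which reduces matters to the integrability on $V$ of $(1+|u(y)|^{p-1}+|v(y)|^{p-1})/(1+|y|)^{N+ps}$; for $p<2$ the cruder bound $|a^{p-1}-b^{p-1}|\le C(|a|^{p-1}+|b|^{p-1})$ suffices. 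Finiteness follows from the tail of $u$ and from hypothesis (ii), which validates the computation above and completes the argument.
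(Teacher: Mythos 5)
Your argument is correct, and it makes explicit what the paper delegates entirely to the cited \cite[Lemmas 2.3, 2.8]{IMS1}: the paper's proof simply rewrites $w=u+(v-u)\chi_V$, records $w\in\widetilde W^{s,p}(\Omega)$, and then invokes those lemmas to obtain both that $\fpl w\in W^{-s,p'}(\Omega)$ and the displayed identity. Your four-piece decomposition of the double integral over $\{V,V^c\}\times\{V,V^c\}$ — using that $\varphi$ vanishes on $V$, the symmetry of the kernel, and $(-a)^{p-1}=-a^{p-1}$ to fold the two cross terms into a single $V^c\times V$ term — is precisely the computation underlying the cited Lemma 2.8, so in spirit this is the same route, written out self-containedly rather than by reference. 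Your verification that $w\in\widetilde W^{s,p}(\Omega)$ is also the natural one.

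Two small refinements would tighten the presentation. First, for the integrability of the inner $y$-integral you do not actually need the mean-value cancellation: since $\mathrm{dist}(\Omega,V)=\delta>0$ and $\Omega$ is bounded, the kernel satisfies $|x-y|^{-(N+ps)}\le C(1+|y|)^{-(N+ps)}$ uniformly in $x\in\Omega$ and $y\in V$, so the crude convexity bound $|a^{p-1}-b^{p-1}|\le C(|a|^{p-1}+|b|^{p-1})$ combined with $|a\pm b|^{p-1}\le C(|a|^{p-1}+|b|^{p-1})$ reduces everything to $\int_V\frac{|u(y)|^{p-1}+|v(y)|^{p-1}}{(1+|y|)^{N+ps}}\,dy<\infty$ and $|u(x)|^{p-1}\in L^{1}_{\rm loc}$, which already follow from $u\in\widetilde W^{s,p}(\Omega)$ and hypothesis (ii) for every $p>1$. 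Second, since you only test against $\varphi\in C^\infty_c(\Omega)$, it is worth remarking (as the paper does implicitly through its Lemma 2.3 citation) that the uniform kernel bound gives $h\in L^{p'}(\Omega)\subset W^{-s,p'}(\Omega)$, so the identity extends by density to all $\varphi\in W^{s,p}_0(\Omega)$ and the equality holds as elements of $W^{-s,p'}(\Omega)$, which is the sense in which $\fpl w$ and $\fpl u$ live.
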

\begin{proof}
We can rephrase $w=u+(v-u)\chi_V$, which implies $w\in\widetilde{W}^{s,p}(\Omega)$. By \cite[Lemmas 2.3, 2.8]{IMS1} we have $\fpl w\in W^{-s,p'}(\Omega)$, moreover for all $\varphi\in\w$
\[\langle\fpl w,\varphi\rangle=\langle\fpl u,\varphi\rangle+\int_\Omega h(x)\varphi(x)\,dx,\]
where for all Lebesgue point $x\in V$ of $u$ we have set
\[h(x)=2\int_V\frac{(u(x)-v(y))^{p-1}-(u(x)-u(y))^{p-1}}{|x-y|^{N+ps}}\,dy.\]
This concludes the proof.
\end{proof}

\noindent
We conclude this section with a key estimate  for a function which is {\em locally} bounded by a suitable multiple of $\ds$ (here we first require that $p\ge 2$). The passage from a global bound to a local bound can be delicate for a nonlocal operator such as $\fpl$. While technical, the next proposition shows the main reason why the degeneracy of the operator forces, in the following sections, a peculiar decomposition of the right hand side (see Remark \ref{rem00} below).

\begin{proposition}\label{updown}
Let $\Omega$ be bounded, $p\ge 2$ and $u\in\widetilde{W}^{s,p}_0(D_R)$ satisfy $\fpl u\in W^{-s,p'}_\le(D_R)$:
\begin{enumroman}
\item\label{updown1} if there exists $m\in\R$ s.t.\ $u\ge m\ds$ in $D_{2R}$, then for all $\eps>0$ there exist $C_\eps=C_\eps(N,p,s,\eps)>0$ and another constant $C_3=C_3(N,p,s)>0$ s.t.\ in $D_R$
\begin{align*}
\fpl\big(u\vee m\ds) &\ge \fpl u-\frac{\eps}{R^s}\Big\|\frac{u}{\ds}-m\Big\|_{L^\infty(D_R)}^{p-1}-C_\eps{\rm tail}_{p-1}\,\Big(\Big(m-\frac{u}{\ds}\Big)_+,2R\Big)^{p-1} \\
&\quad -C_3|m|^{p-2}{\rm tail}_1\,\Big(\Big(m-\frac{u}{\ds}\Big)_+,2R\Big);
\end{align*}
\item\label{updown2} if there exists $M\in\R$ s.t.\ $u\le M\ds$ in $D_{2R}$, then for all $\eps>0$ there exist $C'_\eps=C'_\eps(N,p,s,\eps)>0$ and another constant $C'_3=C'_3(N,p,s)>0$ s.t.\ in $D_R$
\begin{align*}
\fpl\big(u\wedge M\ds) &\le \fpl u+\frac{\eps}{R^s}\Big\|M-\frac{u}{\ds}\Big\|_{L^\infty(D_R)}^{p-1}+C'_\eps{\rm tail}_{p-1}\,\Big(\Big(\frac{u}{\ds}-M\Big)_+,2R\Big)^{p-1} \\
&\quad +C'_3|M|^{p-2}{\rm tail}_1\,\Big(\Big(\frac{u}{\ds}-M\Big)_+,2R\Big).
\end{align*}
\end{enumroman}
\end{proposition}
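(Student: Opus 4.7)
\vskip2pt
\noindent
\textbf{Proof plan.} The strategy is to view $u\vee m\ds$ (resp.\ $u\wedge M\ds$) as a perturbation of $u$ outside $D_{2R}$ and to apply the nonlocal superposition principle of Proposition \ref{spp}. I detail (i); the part (ii) is proved by the same argument with all signs reversed. Set $w:=u\vee m\ds$ and $V:=\{m\ds>u\}$. The hypothesis $u\ge m\ds$ on $D_{2R}$, together with the fact that both $u$ and $\ds$ vanish on $\Omega^c$, yields $V\subseteq \Omega\setminus \overline{D_{2R}}$; moreover $m\ds$ has the tail integrability required by Proposition \ref{spp} since $\Omega$ is bounded, and $D_{R}\Subset\R^{N}\setminus V$. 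Thus Proposition \ref{spp} applied with $v=m\ds$ gives, for $x\in D_R$,
\[
\fpl w(x)=\fpl u(x)-2\int_V \frac{(u(x)-u(y))^{p-1}-(u(x)-m\ds(y))^{p-1}}{|x-y|^{N+ps}}\,dy,
\]
and the integrand is pointwise non-negative on $V$ by monotonicity of $t\mapsto t^{p-1}$; the task reduces to bounding this correction from above by the three summands on the claimed right-hand side.

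The pointwise step rests on the elementary convexity inequality, valid for $p\ge 2$ and $\delta\ge 0$,
\[
(a+\delta)^{p-1}-a^{p-1}\le C(p)\bigl[|a|^{p-2}\delta+\delta^{p-1}\bigr],
\]
applied with $a=u(x)-m\ds(y)$ and $\delta=m\ds(y)-u(y)=(m\ds-u)_+(y)$, followed by the further split $|a|^{p-2}\le C(p)\bigl(|u(x)|^{p-2}+|m|^{p-2}\ds(y)^{p-2}\bigr)$. This produces three bulk terms in the integrand: $|u(x)|^{p-2}\delta$, $|m|^{p-2}\ds(y)^{p-2}\delta$, and $\delta^{p-1}$. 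To handle the first I use the hypothesis to write, for $x\in D_R$,
\[
|u(x)|\le \ds(x)\bigl(|m|+\|u/\ds-m\|_{L^\infty(D_R)}\bigr)\le CR\bigl(|m|+\|u/\ds-m\|_{L^\infty(D_R)}\bigr),
\]
hence $|u(x)|^{p-2}\le CR^{p-2}\bigl(|m|^{p-2}+\|u/\ds-m\|_{L^\infty(D_R)}^{p-2}\bigr)$.

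For the geometric reduction, for $x\in D_R$ and $y\in V\subseteq \Omega\setminus D_{2R}$ one has $|x-y|\ge R$ and $|x-y|\ge |y|/2$; also $\ds(y)\le|y|$ in the boundary-centered setting where the proposition is applied. Writing $\delta(y)=\ds(y)(m-u/\ds)_+(y)$ and substituting, the three bulk contributions convert to integrals weighted by $|y|^{-(N+s)}$, namely multiples of $\mathrm{tail}_1((m-u/\ds)_+,2R)$ (from the first two bulk terms) and $\mathrm{tail}_{p-1}((m-u/\ds)_+,2R)^{p-1}$ (from the $\delta^{p-1}$ term), up to explicit powers of $R$. The cross-term $R^{p-2-(p-1)s}\|u/\ds-m\|_{L^\infty(D_R)}^{p-2}\,\mathrm{tail}_1$ coming from the $|u(x)|^{p-2}\delta$ piece is dispatched by Young's inequality with conjugate exponents $\tfrac{p-1}{p-2}$ and $p-1$: choosing the Young parameter proportional to $\eps R^{-(p-2)(1-s)}$ produces precisely $\tfrac{\eps}{R^s}\|u/\ds-m\|_{L^\infty(D_R)}^{p-1}$ plus a $C_\eps\,\mathrm{tail}_{p-1}^{p-1}$ residue, matching the stated form; the remaining bulk contributions assemble the $C_3|m|^{p-2}\mathrm{tail}_1$ and a further $C_\eps\,\mathrm{tail}_{p-1}^{p-1}$. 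Part (ii) follows verbatim with $w=u\wedge M\ds$, $V=\{u>M\ds\}\subseteq\Omega\setminus D_{2R}$, and the obvious sign changes.

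\vskip2pt
\noindent
The main obstacle is the final paragraph: tracking the correct powers of $R$ through the geometric reductions and then calibrating the Young parameter so that the hybrid $\|u/\ds-m\|^{p-2}\cdot\mathrm{tail}_1$ is redistributed into exactly the form $\tfrac{\eps}{R^s}\|u/\ds-m\|^{p-1}+C_\eps\mathrm{tail}_{p-1}^{p-1}$—the $\eps$-term being destined for reabsorption into the $L^\infty$-norm on the left-hand side in later applications. This is precisely the step that requires $p\ge 2$, since the convexity inequality invoked in the second paragraph degenerates in the singular range $p\in(1,2)$ due to the non-integrable singularity of $|a|^{p-2}$ at the origin.
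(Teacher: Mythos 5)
Your proposal follows essentially the paper's own route: Proposition \ref{spp} with $v=m\ds$ and $V=\{m\ds>u\}\subseteq\Omega\cap B_{2R}^c$ (bounded away from $D_R$, so the superposition principle applies), a pointwise convexity bound which reproduces the paper's \eqref{abc} after relabeling (your $a$, $\delta$ are $b-a$, $a-c$ in the paper's notation, and by oddness of $t\mapsto t^{p-1}$ the two left-hand sides coincide), the same split of $|a|^{p-2}$ producing the three bulk terms $|u(x)|^{p-2}\delta$, $|m|^{p-2}\ds^{p-2}(y)\delta$, $\delta^{p-1}$, and a Young step for the cross term. Two points in your $R$-bookkeeping need fixing. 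First, for $x\in D_R$ one has $\ds(x)\le R^s$, not $CR$, so the correct bound is $|u(x)|^{p-2}\le C R^{(p-2)s}\bigl(|m|^{p-2}+\|u/\ds-m\|_{L^\infty(D_R)}^{p-2}\bigr)$; with this exponent the $\|u/\ds-m\|^{p-2}$-piece of the cross term integrates (via $\ds(y)\le|y|^s$, $|x-y|\ge|y|/2\ge R$) to $C\,\|u/\ds-m\|_{L^\infty(D_R)}^{p-2}\,{\rm tail}_1\bigl((m-u/\ds)_+,2R\bigr)$ with no leftover power of $R$. Second, Young with conjugate exponents $\tfrac{p-1}{p-2},\,p-1$ and weight calibrated so that the first summand is $\tfrac{\eps}{R^s}\|u/\ds-m\|^{p-1}$ produces a residue $C_\eps R^{(p-2)s}\,{\rm tail}_1^{p-1}$, which is \emph{not} yet ${\rm tail}_{p-1}^{p-1}$: you still need the H\"older/Jensen estimate ${\rm tail}_1(v,2R)\le C R^{-s(p-2)/(p-1)}\,{\rm tail}_{p-1}(v,2R)$ (using the finite mass of $|x|^{-N-s}\,dx$ on $\Omega\cap B_{2R}^c$), after which the powers of $R$ cancel and the residue is $C_\eps\,{\rm tail}_{p-1}^{p-1}$ as claimed. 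The paper avoids this last conversion by applying Young pointwise to the product $(u(x)-m\ds(x))^{p-2}\bigl(m\ds(y)-u(y)\bigr)$ \emph{before} integrating in $y$, so its Young residue is $(m\ds(y)-u(y))^{p-1}$, which integrates directly to ${\rm tail}_{p-1}^{p-1}$. Both routes are sound and essentially equivalent; the paper's is marginally shorter.
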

\begin{proof}
We prove \ref{updown1}. We may assume $u/\ds-m\in L^\infty(D_R)$, otherwise there is nothing to prove. We will use the following elementary inequality: since $p\ge 2$, there exists $C_p>0$ s.t.
\beq\label{abc}
(a-b)^{p-1}-(c-b)^{p-1}\le C_p(|a|^{p-2}+|b|^{p-2})|a-c|+C_p|a-c|^{p-1}\qquad \text{for all $a,b,c\in\R$}.
\eeq
Indeed, by Lagrange's theorem and convexity, we have
\begin{align*}
(a-b)^{p-1}-(c-b)^{p-1} &\le C_p(|a|^{p-2}+|b|^{p-2}+|c|^{p-2})|a-c| \\
&\le C_p\big(|a|^{p-2}+|b|^{p-2}+C'_{p}(|c-a|^{p-2}+|a|^{p-2})\big)|a-c|,
\end{align*}
which implies \eqref{abc}. Set $w=u\vee m\ds$. Since $\{u<m\ds\}\subseteq D_{2R}^c$ is bounded away from $D_R$, we can apply Proposition \ref{spp} and get for all $x\in D_R$
\beq\label{ud1}
\begin{split}
\fpl w(x) &= \fpl u(x)+2\int_{\{u<m\ds\}}\frac{(u(x)-m\ds(y))^{p-1}-(u(x)-u(y))^{p-1}}{|x-y|^{N+ps}}\,dy \\
&= \fpl u(x)-2\int_{\{u<m\ds\}}\frac{(m\ds(y)-u(x))^{p-1}-(u(y)-u(x))^{p-1}}{|x-y|^{N+ps}}\,dy.
\end{split}
\eeq
We use \eqref{abc} to estimate the numerator of the integrand, recalling also that ${\rm d}_\Omega(x)\le R$, $u(x)\ge m\ds(x)$, $R<{\rm d}_\Omega(y)\le |y|$, and $u(y)<m\ds(y)$:
\begin{align*}
 (m\ds(y)&-u(x))^{p-1}-(u(y)-u(x))^{p-1} \\
&\le C_p\big(|m\ds(y)|^{p-2}+|u(x)|^{p-2}\big)|m\ds(y)-u(y)|+C_p|m\ds(y)-u(y)|^{p-1} \\
&\le C_p\big(|m|^{p-2}{\rm d}_\Omega^{(p-2)s}(y)+|m|^{p-2}R^{(p-2)s}+(u(x)-m\ds(x))^{p-2}\big)(m\ds(y)-u(y))\\
&\quad+C_p(m\ds(y)-u(y))^{p-1} \\
&\le C|m|^{p-2}|y|^{(p-2)s}(m\ds(y)-u(y))+\eps(u(x)-m\ds(x))^{p-1}+C_\eps(m\ds(y)-u(y))^{p-1},
\end{align*}
where in the end we have also used Young's inequality with exponents $q=(p-1)(p-2)^{-1}$ and $q'=p-1$. Here $C>0$ depends only on $N$, $p$, $s$, while $C_\eps>0$ also depends on $\eps>0$. Now, by means of the inequality above and the relations $|x-y|\ge |y|/2 \ge R$, we can estimate the integral in \eqref{ud1}, getting
\begin{align*}
&\int_{\{u<m\ds\}}\frac{(m\ds(y)-u(x))^{p-1}-(u(y)-u(x))^{p-1}}{|x-y|^{N+ps}}\,dy \\
&\quad\le \eps\int_{\{u<m\ds\}}\frac{(u(x)-m\ds(x))^{p-1}}{|x-y|^{N+ps}}\,dy+C_\eps\int_{\{u<m\ds\}}\frac{(m\ds(y)-u(y))^{p-1}}{|x-y|^{N+ps}}\,dy \\
&\quad\quad+ C|m|^{p-2}\int_{\{u<m\ds\}}\frac{|y|^{(p-2)s}(m\ds(y)-u(y))}{|x-y|^{N+ps}}\,dy \\
&\quad\le \eps\Big\|m-\frac{u}{\ds}\Big\|_{L^\infty(D_R)}^{p-1}\int_{D_{2R}^c}\frac{R^{(p-2)s}}{|y|^{N+ps}}\,dy + C_\eps\int_{D_{2R}^c}\Big(m-\frac{u(y)}{\ds(y)}\Big)_+^{p-1}\,\frac{dy}{|y|^{N+s}} \\
&\quad\quad+ C|m|^{p-2}\int_{D_{2R}^c}\Big(m-\frac{u(y)}{\ds(y)}\Big)_+\,\frac{dy}{|y|^{N+s}} \\
&\quad\le \frac{\eps}{R^s}\Big\|\frac{u}{\ds}-m\Big\|_{L^\infty(D_R)}^{p-1}+C_\eps{\rm tail}_{p-1}\Big(\Big(m-\frac{u}{\ds}\Big)_+,\,2R\Big)^{p-1}+C_3{\rm tail}_1\Big(\Big(m-\frac{u}{\ds}\Big)_+,\,2R\Big),
\end{align*}
where we may take, if necessary, $\eps>0$ even smaller and $C_\eps>0$ even bigger, plus some $C_3(N, p, s)$. Plugging the last inequality into \eqref{ud1} (and replacing $\eps$ with $\eps/2$), we achieve \ref{updown1}.
\vskip2pt
\noindent
The argument for \ref{updown2} is immediate, by replacing $u$ with $-u$ and $m$ with $-M$.
\end{proof}

\begin{remark}
\label{rem00}
Before going further, a short discussion is in order. Proposition \ref{updown} provides bounds of the fractional $p$-Laplacians of truncated functions, which involve {\em two} tail terms with different exponents, namely ${\rm tail}_{p-1}$ and ${\rm tail}_1$. One of the main issues in the forthcoming sections will be to estimate inductively such tail terms, taking into account that they behave differently when $R\to 0^+$, with ${\rm tail}_{1}$ being asymptotically larger than ${\rm tail}_{p-1}$. In adjusting those estimates, the quantities $|m|^{p-2}$, $|M|^{p-2}$ multiplying the term ${\rm tail}_1$ in \ref{updown1}, \ref{updown2} respectively, will play a fundamental r\^ole. That is why we will emphasize the $m$-dependence of the right hand side for supersolutions (respectively, its $M$-dependence for subsolutions). Precisely, we shall prove a lower bound for a function $u$ satisfying
\[\begin{cases}
\fpl u\ge -K-m^{p-2}H & \text{in $D_R$} \\
u\ge m\ds & \text{in $\R^N$,}
\end{cases}\]
and an upper bound for a function $u$ satisfying
\[\begin{cases}
\fpl u\le K+M^{p-2}H & \text{in $D_R$} \\
u\le M\ds & \text{in $\R^N$,}
\end{cases}\]
respectively, with convenient $K,H,m,M>0$. As we will see, the upper and lower bounds require substantially different approaches.
\end{remark}

\section{The lower bound}\label{sec3}

\noindent
This section is devoted to the study of supersolutions of \eqref{dp} type problems, locally bounded from below by a multiple of $\ds$. For such supersolutions we aim at proving a lower bound for the quotient $u/\ds$ near the boundary (see Proposition \ref{lower} below).
\vskip2pt
\noindent
First, we assume that the supersolution $u$ is {\em globally} bounded from below by $m\ds$ and rephrase the lower bound on $\fpl u$ as $-K-m^{p-2}H$. Precisely, we assume $p\ge 2$, $0\in\partial\Omega$ (for simplicity of notation), $R\in\ ]0,\rho/4[$, and consider $u\in\widetilde{W}^{s,p}(D_R)$ satisfying for some $K,H,m\ge 0$
\beq\label{super}
\begin{cases}
\fpl u\ge -K-m^{p-2}H & \text{in $D_R$} \\
u\ge m\ds & \text{in $\R^N$.}
\end{cases}
\eeq
A major r\^ole in determining the behavior of $u/\ds$ in a semi-disc $D_{R}(x_{0})$ is played by the following nonlocal excess
\beq\label{lq}
\Ex(u, k, R, x_{0}) = \dashint_{\tilde B_R(x_{0})}\left|\frac{u(x)}{\ds(x)}-k\right|\,dx,
\eeq
where $\tilde B_R(x_{0})$ is defined as in \eqref{bt}. As we will frequently assume $x_{0}=0$, the dependence on the latter will be omitted. We begin by proving a lower bound for the case of large values of the excess, which highlights the nonlocal feature of the equation.

\begin{lemma}\label{lowerbig}
Let $u\in\widetilde{W}^{s,p}(D_R)$ solve \eqref{super}, $p\ge 2$ and $\Omega$ satisfy \eqref{grho}. Then there exist $\theta_1=\theta_1(N,p,s)\ge 1$, $C_4=C_4(N,p,s)>1$, $\sigma_1=\sigma_1(N,p,s)\in (0,1]$ s.t.\ for all $R\in \ ]0, \rho/4[$
\[\Ex(u, m, R)\ge m\theta_1 \ \Longrightarrow \ \inf_{D_{R/2}}\Big(\frac{u}{\ds}-m\Big) \ge \sigma_1\Ex (u, m, R)-C_4(KR^s)^\frac{1}{p-1}-C_4HR^s.\]
\end{lemma}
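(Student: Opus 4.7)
The plan is to implement, in the large-excess regime, the three-step scheme outlined in the Introduction: build an explicit lower barrier $w_\lambda$ from the torsion function of $D_{R/2}$; modify it via the nonlocal superposition principle (Proposition \ref{spp}) on the ball $\tilde B_R$, so as to inject the excess information into the equation; and close with the weak comparison principle (Proposition \ref{comp}). To set up the barrier, I would let $v \in W^{s,p}_0(D_{R/2})$ denote the torsion function of $D_{R/2}$, i.e., $\fpl v = 1$ in $D_{R/2}$. Rescaling the bounds of Lemma \ref{torest} and invoking Lemma \ref{hopf} for the domain $D_{R/2}$, whose interior sphere radius is comparable to $R$ under $R<\rho/4$, yield a two-sided estimate $v(x) \asymp R^{s/(p-1)} \ds(x)^s$ on the portion of $D_{R/2}$ close to $\partial\Omega$. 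The basic barrier is then
\[
w_\lambda := m\,\ds + \lambda\,v,
\]
with $\lambda>0$ to be fixed below proportionally to $\Ex(u, m, R)$.

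Next I form $\widetilde w_\lambda := w_\lambda\,\chi_{\R^N\setminus \tilde B_R} + u\,\chi_{\tilde B_R}$. The hypothesis $\Ex(u, m, R) \ge m\theta_1$ combined with $u \ge m\,\ds$ ensures $u \ge w_\lambda$ on $\tilde B_R$ as long as $\lambda \le c\,\Ex(u, m, R)$ with $\theta_1$ chosen large enough. Proposition \ref{spp} then gives, for $x\in D_R$,
\[
\fpl \widetilde w_\lambda(x) = \fpl w_\lambda(x) + 2\int_{\tilde B_R}\frac{(w_\lambda(x)-u(y))^{p-1} - (w_\lambda(x)-w_\lambda(y))^{p-1}}{|x-y|^{N+ps}}\,dy.
\]
Since $a\mapsto a^{p-1}$ is monotone and $u \ge w_\lambda$ on $\tilde B_R$, the integrand is nonpositive; a pointwise convexity inequality for $a\mapsto a^{p-1}$ (valid for $p\ge 2$), together with uniform control on the kernel $|x-y|^{-N-ps}$ restricted to $\tilde B_R$, upgrades this into the quantitative gap
\[
\fpl \widetilde w_\lambda \le \fpl w_\lambda - c_0\,\Ex(u, m, R)^{p-1} \quad \text{in }D_R,
\]
for some $c_0 = c_0(N, p, s) > 0$. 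In parallel, boundedness of $\fpl\,\ds$ near $\partial\Omega$ (the key input in the proof of Lemma \ref{torest}) together with $\fpl v = 1$ in $D_{R/2}$ produce an upper bound $\fpl w_\lambda \le C_\ast(m^{p-1} R^{-s} + \lambda^{p-1})$ in $D_R$.

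Choosing $\lambda = \sigma_1 \Ex(u, m, R)$ with $\sigma_1$ so small that $C_\ast \sigma_1^{p-1} \le c_0/4$, and $\theta_1$ so large that $C_\ast m^{p-1} R^{-s}$ is absorbed by $c_0\,\Ex(u, m, R)^{p-1}/4$ via $m \le \Ex(u, m, R)/\theta_1$, one obtains $\fpl \widetilde w_\lambda \le -c_0\,\Ex(u, m, R)^{p-1}/2$ in $D_R$. Further dominating the residual lower bound $-K - m^{p-2}H \le \fpl u$ by Young-type inequalities produces exactly the two corrective terms $C_4 (KR^s)^{1/(p-1)}$ and $C_4 HR^s$ that appear in the statement. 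Since $\widetilde w_\lambda = w_\lambda = m\,\ds \le u$ on $\R^N\setminus D_R$ (as $v \equiv 0$ there), Proposition \ref{comp} yields $\widetilde w_\lambda \le u$ in $\R^N$, and in particular $w_\lambda \le u$ in $D_{R/2}$; the Hopf-type lower bound $v \ge c R^{s/(p-1)}\ds^s$ together with $\ds \le R/2$ on $D_{R/2}$ then transports this into $u/\ds - m \ge \sigma_1 \Ex(u, m, R)$, modulo the announced corrections.

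The main obstacle I anticipate is the quantitative step in the superposition argument: turning the pointwise non-positivity of the integrand into a lower bound of order $\Ex(u, m, R)^{p-1}$ requires invoking the $p\ge 2$ convexity inequality $a^{p-1}-b^{p-1} \ge c_p(|a|\vee|b|)^{p-2}(a-b)$ along the pair $(w_\lambda(x) - u(y),\,w_\lambda(x) - w_\lambda(y))$, together with sharp geometric control over how $\tilde B_R$ sits inside $D_R$ relative to the fractional kernel. A secondary difficulty is the bookkeeping in the choice of constants $\sigma_1, \theta_1$, where the contributions $m^{p-1} R^{-s}$, $K$, and $m^{p-2} H$ must be played off against the leading $\Ex^{p-1}$ term in a dimensionally correct manner.
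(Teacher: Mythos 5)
Your high-level scheme (torsion barrier, superposition on $\tilde B_R$, comparison) matches the paper's, but two of the concrete steps don't go through as written.

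First, you take the torsion function of $D_{R/2}=B_{R/2}\cap\Omega$ and claim its interior sphere radius is comparable to $R$ so that Lemma \ref{hopf} applies. This is false: $D_{R/2}$ has a ``corner'' along $\partial B_{R/2}\cap\partial\Omega$, so $\rho(D_{R/2})=0$ and the Hopf bound $v\gtrsim R^{s/(p-1)}\,{\rm d}_{D_{R/2}}^s$ cannot be obtained from Lemma \ref{hopf}. In fact the torsion function of a domain with a re-entrant Lipschitz corner vanishes faster than ${\rm d}^s$ at the corner, so the bound fails there even in principle. The paper avoids this precisely by replacing $D_R$ with the chamfered set $A_R=\bigcup\{B_r(y):r\ge R/8,\ B_r(y)\subset D_R\}$, which has $\rho_{A_R}\ge R/16$, and then proving the separate geometric estimate ${\rm d}_\Omega\le C\,{\rm d}_{A_R}$ on $D_{R/2}$; you need some analogous regularization and cannot work directly on $D_{R/2}$.

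Second, your basic barrier is $w_\lambda = m\,\ds + \lambda v$, and you assert $\fpl w_\lambda \le C_\ast(m^{p-1}R^{-s}+\lambda^{p-1})$ by combining the bound on $\fpl\ds$ with $\fpl v=1$. But $\fpl$ is nonlinear, so $\fpl(m\ds+\lambda v)$ is \emph{not} controlled by $m^{p-1}\fpl\ds+\lambda^{p-1}\fpl v$; there is no subadditivity of this kind for the fractional $p$-Laplacian, and no argument is offered to bridge the gap. The paper sidesteps this entirely by taking the purely homogeneous barrier $w=\lambda R^{-s/(p-1)}v$ (equal to $u$ on $\tilde B_R$), for which $\fpl w\le \lambda^{p-1}/R^s$ plus the negative superposition term; the comparison then yields $u/\ds\ge\lambda/C$ in $D_{R/2}$, and the $-m$ in the conclusion is absorbed at the very end using the large-excess hypothesis $m\le\Ex(u,m,R)/\theta_1$. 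That is the structural reason the hypothesis $\Ex\ge m\theta_1$ appears: it is what lets one avoid putting $m\ds$ inside the barrier, rather than (as you write) ensuring $u\ge w_\lambda$ on $\tilde B_R$ -- that inequality is automatic since $v\equiv 0$ there and $u\ge m\ds$ globally.

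Minor points: your quantitative superposition gap is missing the factor $R^{-s}$, which comes from the geometry of $\tilde B_R$ relative to $D_R$ ($|x-y|\asymp R$, ${\rm d}_\Omega(y)\asymp R$, $|\tilde B_R|\asymp R^N$), and the inequality actually used is $(a+b)^{p-1}-a^{p-1}\ge 2^{2-p}b^{p-1}$ for $b\ge 0$ (which your stated convexity inequality does imply, after noting $|a|\vee|b|\ge|a-b|/2$, but the simpler form is what makes the estimate run cleanly).
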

\begin{proof}
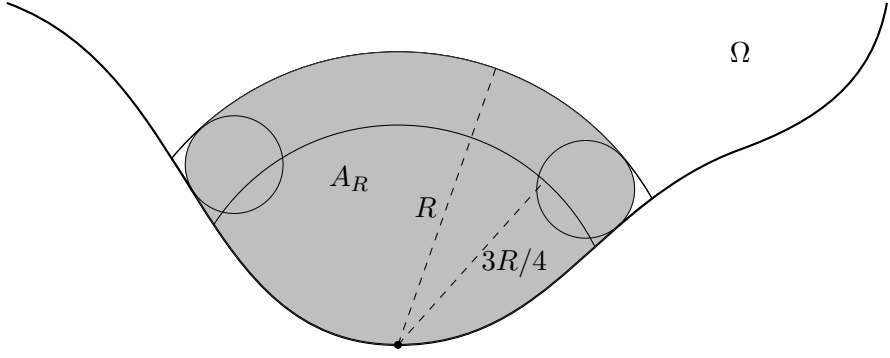
\begin{figure}
\centering
\begin{tikzpicture}[scale=1.3]
\filldraw[lightgray] (-1.675,1.845) circle (0.5);
\filldraw[lightgray] (1.92,1.592) circle (0.5);
\filldraw (0, 0) circle (1pt);
\draw (3.5,3) node{$\Omega$};
\draw[thick] (-4,3.5) to [out=-20, in=180] (0, 0) to [out=0, in=200] (3.5,2) to [out=20, in=-100] (5, 3.5);
\draw[clip] (-4,3.5) to [out=-20, in=180] (0, 0) to [out=0, in=200] (3.5,2) to [out=20, in=-100] (5, 3.5);
\draw (0, 0) circle (3);
\filldraw[lightgray] (0, 0) circle (2.56);
\draw[clip] (0, 0) circle (3);
\filldraw[lightgray]  (0, 0) -- (2.5, 2.05) -- (0, 4) -- (-2.26, 2.5) -- (0, 0);
\draw[very thin] (1.92,1.592) circle (0.5);
\draw[very thin] (-1.675,1.845) circle (0.5);
\draw (0, 0) circle (2.25);
\filldraw (0, 0) circle (1pt);
\draw (-0.5, 1.7) node{$A_{R}$};
\draw[dashed] (0,0) -- node[left]{$R$} (1, 2.83);
\draw[dashed] (0,0) -- node[right]{$3R/4$} (1.5,1.68);
\draw[thick] (-4,3.5) to [out=-20, in=180] (0, 0) to [out=0, in=200] (3.5,2) to [out=20, in=-100] (5, 3.5);
\draw (0, 0) circle (3);
\end{tikzpicture}
\caption{The regularized set $A_{R}$ in gray.}
\label{fig2}
\end{figure}
Set
\[A_R=\bigcup\Big\{B_r(y):\,y\in\R^N,\,r\ge\frac{R}{8},\,B_r(y)\subset D_R\Big\}.\]
By the regularity of $\partial\Omega$ stated in \eqref{grho} and $R<\rho/4$, $A_R\subset\R^N$ is a bounded domain satisfying the interior sphere condition with radius $\rho_{A_R}\ge R/16$ (see figure \ref{fig2}). Moreover we claim that 
\beq\label{lb1}
{\rm d}_\Omega\le C{\rm d}_{A_R} \quad \text{in $D_{R/2}$.}
\eeq
First note that $D_{3R/4}\subseteq A_R$ implies ${\rm d}_{D_{3R/4}}\le{\rm d}_{A_R}$ in $\R^N$. Furthermore, for all $x\in D_{R/2}$  we have
\[{\rm d}_\Omega(x) \le |x-\Pi_\Omega(\Pi_{D_{3R/4}}(x))| \le |x-\Pi_{D_{3R/4}}(x)|+|\Pi_{D_{3R/4}}(x)-\Pi_\Omega(\Pi_{D_{3R/4}}(x))|.\]
To proceed, we distinguish two cases:
\begin{itemize}[leftmargin=0.7cm]
\item[$(a)$] if $\Pi_{D_{3R/4}}(x)\in\partial\Omega$, then $\Pi_\Omega(\Pi_{D_{3R/4}}(x))=\Pi_{D_{3R/4}}(x)$ and so
\[{\rm d}_\Omega(x) \le {\rm d}_{D_{3R/4}}(x) \le {\rm d}_{A_R}(x);\]
\item[$(b)$] if $\Pi_{D_{3R/4}}(x)\notin\partial\Omega$, then we have $|\Pi_{D_{3R/4}}(x)|,\,|\Pi_\Omega(\Pi_{D_{3R/4}}(x))|\le R$ and ${\rm d}_{D_{3R/4}}(x)\ge R/4$, which in turn implies
\[{\rm d}_\Omega(x) \le {\rm d}_{D_{3R/4}}(x)+|\Pi_{D_{3R/4}}(x)|+|\Pi_\Omega(\Pi_{D_{3R/4}}(x))| \le 9{\rm d}_{D_{3R/4}}(x) \le 9{\rm d}_{A_R}(x).\]
\end{itemize}
Both cases lead to \eqref{lb1}. We will also use the following elementary inequality from \cite[Eq.\ (2.7)]{IMS1}:  since $p\ge 2$, for all $a\in\R$, $b\ge 0$ we have
\beq\label{ab}
(a+b)^{p-1}-a^{p-1}\ge 2^{2-p}b^{p-1}.
\eeq
Let $v\in W^{s,p}_0(A_R)$ be the solution of the torsion problem
\[\begin{cases}
\fpl v=1 & \text{in $A_R$} \\
v=0 & \text{in $A_R^c$.}
\end{cases}\]
By Lemma \ref{subglobal} we have $\fpl v\le 1$ in $\R^N$. Besides, by Lemma \ref{hopf} and \eqref{lb1} we have
\beq\label{lb2}
v\ge \frac{R^\frac{s}{p-1}}{C}\ds \quad \text{in $D_{R/2}$.}
\eeq
Pick $\lambda>0$ (to be  determined later) and set
\[w(x)=\begin{cases}
\displaystyle\frac{\lambda}{R^\frac{s}{p-1}}v(x) & \text{if $x\in\tilde B_R^c$} \\
u(x) & \text{if $x\in\tilde B_R$,}
\end{cases}\]
where $\tilde B_R$ is defined as in \eqref{bt}. We note that ${\rm dist}\,(\tilde B_R,\,D_R)>0$, so we can apply Proposition \ref{spp}. Also using homogeneity of $\fpl$, \eqref{ab}, and the relations ${\rm d}_\Omega(y)<3R/2$, $|x-y|>3R/4$, we get for all $x\in D_R$
\begin{align*}
\fpl w(x) &= \fpl \Big(\frac{\lambda}{R^\frac{s}{p-1}}v(x)\Big)+2\int_{\tilde B_R}\frac{(w(x)-u(y))^{p-1}-w^{p-1}(x)}{|x-y|^{N+ps}}\,dy \\
&\le \frac{\lambda^{p-1}}{R^s}-\frac{1}{C}\int_{\tilde B_R}\frac{u(y)^{p-1}}{|x-y|^{N+ps}}\,dy \\
&\le \frac{\lambda^{p-1}}{R^s}-\frac{1}{C}\int_{\tilde B_R}\frac{(u(y)-m\ds(y))^{p-1}}{|x-y|^{N+ps}}\,dy.
\end{align*}
Observe that by the property \eqref{bt} of $\tilde B_{R}$ 
\begin{align*}
\int_{\tilde B_R}\frac{(u(y)-m\ds(y))^{p-1}}{|x-y|^{N+ps}}\,dy &\ge \int_{\tilde B_R}\big(\frac{u(y)}{\ds(y)}-m\big)^{p-1}\frac{{\rm d}_\Omega^{s(p-1)}(y)}{|x-y|^{N+ps}}\,dy \\
&\ge \frac{(3/2R)^{s(p-1)}}{(R/2)^{N+ps}}\int_{\tilde B_R}\big(\frac{u(y)}{\ds(y)}-m\big)^{p-1}\,dy,
\end{align*}
and thus by H\"older inequality and the fact that $u\ge m\ds$ in $\tilde B_{R}$,
\[\fpl w(x)\le\frac{\lambda^{p-1}}{R^s}-\frac{1}{C R^{s}}\dashint_{\tilde B_R}\Big(\frac{u(y)}{\ds(y)}-m\Big)^{p-1}\,dy\le \frac{\lambda^{p-1}}{R^s}-\frac{\Ex(u, m, R)^{p-1}}{C R^{s}}.\]
Choosing 
\beq
\label{lbdeflambda}
\lambda=\frac{\Ex(u, m, R)}{(2C^2)^\frac{1}{p-1}},
\eeq
 we have
\beq\label{lb3}
\fpl w\le -\frac{\Ex(u, m, R)^{p-1}}{2CR^s} \quad \text{in $D_R$.}
\eeq
Now we choose the constants, setting
\[\theta_1=\frac{1}{\sigma_1}=2C(2C^2)^\frac{1}{p-1}, \quad C_4=\sigma_1\max\,\big\{(4C)^\frac{1}{p-1},\,4C\theta_1^{2-p}\big\}.\]
Clearly $\theta_1,\,C_4\ge 1\ge\sigma_1>0$ only depend on $N$, $p$, $s$. Assuming
\beq\label{lb4}
\Ex(u, m, R)\ge m\theta_1,
\eeq
we claim that
\[\inf_{D_{R/2}}\Big(\frac{u}{\ds}-m\Big) \ge \sigma_1\Ex(u, m, R)-C_4(KR^s)^\frac{1}{p-1}-C_4HR^s.\]
Two cases may occur:
\begin{itemize}[leftmargin=0.6cm]
\item[$(a)$] If $\sigma_1\Ex(u, m, R)\le C_4(KR^s)^\frac{1}{p-1}+C_4HR^s$, then the claim is immediate being the left hand side non-negative.
\item[$(b)$] If $\sigma_1\Ex(u, m, R)>C_4(KR^s)^\frac{1}{p-1}+C_4HR^s$, then from the definitions above and \eqref{lb4} we have
\[\Ex(u, m, R)^{p-1} \ge \begin{cases}
\displaystyle\Big(\frac{C_4}{\sigma_1}\Big)^{p-1}KR^s \ge 4CKR^s \\[2pt]
\displaystyle (m\theta_1)^{p-2}\Ex(u, m, R) \ge (m\theta_1)^{p-2}\frac{C_4}{\sigma_1}HR^s \ge 4Cm^{p-2}HR^s,
\end{cases}\]
and by summing up
\[\Ex(u, m, R)^{p-1}\ge 2CR^s(K+m^{p-2}H).\]
Now by \eqref{super}, \eqref{lb3}, and recalling that $w=\chi_{\tilde B_R}u$ in $D_R^c$, we have
\[\begin{cases}
\fpl w \le K+m^{p-2}H\le \fpl u & \text{in $D_R$} \\
w\le u & \text{in $D_R^c$.}
\end{cases}\]
By Proposition \ref{comp} we have $w\le u$ in $\R^N$. In particular, for all $x\in D_{R/2}$, recalling \eqref{lb2} and the definition of $\lambda$ in \eqref{lbdeflambda}, we have
\[u(x) \ge \frac{\lambda }{R^\frac{s}{p-1}}v(x) \ge \frac{\Ex(u, m , R)}{C(2C^2)^\frac{1}{p-1}}\ds(x).\]
Thus, by \eqref{lb4} again
\[\inf_{D_{R/2}}\Big(\frac{u}{\ds}-m\Big) \ge \Ex(u, m, R)\Big(\frac{1}{C(2C^2)^\frac{1}{p-1}}-\frac{1}{\theta_1}\Big) = \sigma_1\Ex(u, m , R).\]
\end{itemize}
In both cases the proof is concluded.
\end{proof}

\begin{remark}
In Lemma \ref{lowerbig} we bound $u/\ds$ from below by means of the sum of three terms, one of which depends on $u$ while the others do not, and the latter are in fact dropped unless the sum is negative. This strategy will be used several times in the following results.
\end{remark}

\noindent
The next result is a change of variables lemma for $\fpl$, strictly related to the discussion on the boundedness of the fractional $p$-Laplacians of distance functions developed in \cite[Section 3]{IMS1}. Here $\mathcal{GL}_N$ denotes the group of all invertible matrices in $\R^{N\times N}$, and $|A|$ denotes any matrix norm. For all $A\in\mathcal{GL}_N$, $x\in\Omega$, and $\eps>0$ we set
\beq\label{geps}
g_\eps(A, x)=\int_{B_\eps^c(x)}\frac{(\ds(x)-\ds(y))^{p-1}}{|A(x-y)|^{N+ps}}\,dy.
\eeq
We need some more notation for this result: for all $U,V\subset\R^N$ we denote the Hausdorff distance between $U$ and $V$ by
\[{\rm dist}_{\mathcal H}(U,V)=\max\Big\{\sup_{x\in U}{\rm dist}(x,V),\,\sup_{y\in V}{\rm dist}(y,U)\Big\},\]
and the symmetric difference by
\[U\Delta V=(U\setminus V)\cup (V\setminus U).\]
Finally, for all $U\subset\R^N$ we denote by ${\mathcal H}^{N-1}(U)$ the $(N-1)$-dimensional Hausdorff measure of $U$.

\begin{lemma}\label{change}
{\rm (Change of variables)} If $\partial\Omega$ is $C^{1,1}$, there exist $\delta=\delta(N)>0$, $C_5=C_5(N,p,s,\Omega)>0$ s.t.\
\begin{enumroman}
\item $g_\eps\to g_0$ in $L^\infty_{\rm loc}(B_{\delta}(I)\times\Omega_{\rho/2})$, as $\eps\to 0^+$;
\item $\|g_0\|_{L^\infty(B_{\delta}(I)\times\Omega_{\rho/2})}\le C_5$.
\end{enumroman}
\end{lemma}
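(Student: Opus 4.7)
The plan is to reduce to the case $A = I$, for which $g_0(I, \cdot)$ coincides (up to a constant) with $\fpl \ds$, whose boundedness on $\Omega_{\rho/2}$ was established in \cite{IMS1}, and then to perturb in $A$. After the translation $z = y - x$ we can rewrite
$$g_\eps(A,x)=\int_{|z|>\eps}\frac{(\ds(x)-\ds(x+z))^{p-1}}{|Az|^{N+ps}}\,dz.$$
For $\delta \in (0, 1/2)$ and any $A \in B_\delta(I)$ one has $(1-\delta)|z|\le |Az|\le (1+\delta)|z|$ for all $z \in \R^N$, so the matrix-valued kernel $|Az|^{-N-ps}$ is pointwise comparable to $|z|^{-N-ps}$, uniformly in $A$.

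Fix a cutoff $r_0 \in (0, \rho/4)$ and split $g_\eps = I_1 + I_2$ across the radius $|z|=r_0$. The outer contribution $I_2$ is bounded uniformly in $(A, x) \in B_\delta(I) \times \Omega_{\rho/2}$, since $\ds$ is bounded on $\R^N$ and the kernel is integrable at infinity with a constant depending only on $r_0, N, p, s$; moreover $I_2$ is independent of $\eps$ and depends continuously on $A$. For the singular part $I_1$, exploit the symmetry $|Az| = |A(-z)|$ to pair up $z$ and $-z$:
$$I_1 = \frac{1}{2}\int_{\eps<|z|<r_0}\frac{(\ds(x)-\ds(x+z))^{p-1}+(\ds(x)-\ds(x-z))^{p-1}}{|Az|^{N+ps}}\,dz.$$
The cancellation of the odd-in-$z$ component of the numerator, combined with the $C^{1,1}$-regularity of ${\rm d}_\Omega$ on $\overline{\Omega_\rho}$, is exactly what yields existence of the principal value and a uniform bound on $I_1$ in \cite{IMS1} when $A = I$; the very same argument carries over to $A \in B_\delta(I)$, because the kernel $|Az|^{-N-ps}$ is still even in $z$ and pointwise comparable to $|z|^{-N-ps}$.

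Local uniform convergence $g_\eps(A, x) \to g_0(A, x)$ as $\eps \to 0^+$ then follows by dominated convergence with a majorant provided by the above absolute bounds, and (ii) is read off the uniform bound derived for $I_1 + I_2$. The genuinely delicate step—already present, and handled, in \cite{IMS1}—is the uniform control of $I_1$ as $x \to \partial \Omega$: there $|\nabla \ds| \sim {\rm d}_\Omega^{s-1}$ blows up, so a naive Taylor expansion of $\ds$ fails, and one must instead rescale about $x$ at scale ${\rm d}_\Omega(x)$ and locally compare with the half-space model $\ds(y) = y_N^s$, for which the corresponding singular integral vanishes identically. Once this uniform bound at the boundary is imported from \cite{IMS1}, the perturbative transfer to $A \in B_\delta(I)$ described above completes the proof.
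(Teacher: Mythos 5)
Your proposed route — translate, symmetrize in $z$, and then transfer the boundary estimate for $\ds$ from \cite{IMS1} to anisotropic kernels by ``comparability'' — has a genuine gap at the transfer step. The uniform bound on $I_1$ near $\partial\Omega$, as you correctly note, hinges on comparing $\ds$ locally with the half-space model $(y_N)_+^s$ and on the exact vanishing of the corresponding principal value. This vanishing is \emph{not} preserved under replacement of $|z|^{-N-ps}$ by a merely pointwise-comparable, even kernel: comparability gives two-sided absolute bounds, but the half-space identity is an exact cancellation that depends on the homogeneity and rotational invariance of $|z|^{-N-ps}$. What saves the day for $A\in B_\delta(I)$ is not comparability but the substitution $w=Az$: one computes
\[
\mathrm{p.v.}\int\frac{\big((x_N)_+^s-(x_N-z_N)_+^s\big)^{p-1}}{|Az|^{N+ps}}\,dz
=|\det A|^{-1}|A^{-T}e_N|^{ps}\,\mathrm{p.v.}\int\frac{\big((x_N)_+^s-(x_N-u_N)_+^s\big)^{p-1}}{|u|^{N+ps}}\,du=0,
\]
using rotational invariance after aligning $A^{-T}e_N$ with $e_N$ and then rescaling. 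This change of variables is exactly the crux of the lemma, and your proposal omits it; without it, ``import the bound from \cite{IMS1} and perturb'' is not justified.

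Two further points. First, after the half-space comparison one must re-run the error estimates of \cite{IMS1} (the difference between $\ds$ and its half-space model at scale ${\rm d}_\Omega(x)$) with the kernel $|Az|^{-N-ps}$ and check uniformity in $A$; this is not automatic. The paper circumvents this entirely by straightening the boundary with a $C^{1,1}$ diffeomorphism $\Phi$ so that $\ds\circ\Phi^{-1}=(x'_N)_+^s$ is the \emph{exact} half-space model, then linearizing $\Phi$ at $x$, splitting the integral over $\Phi(B_\eps^c(x))$ versus its linearization $L_x(B_\eps^c(x))$, and pushing all the error into the measure of the symmetric difference and into the Lipschitz remainder of the Jacobian kernel $K(x',y')-|\det M_x|$, which is integrable against $|x'-y'|^{1-N-s}$. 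That decomposition is cleaner and makes the $A$-dependence fully explicit. Second, local-uniform convergence of $g_\eps$ is not obtained by dominated convergence alone; the paper tracks the $\eps$-dependence of each piece (e.g. $|h_\eps^1|\le C\eps^{1-s}$, $|h_\eps^5-h^5|\le C\eps^{1-s}$) to get the uniform rate, and only the half-space piece $h^3_\eps$ converges merely locally uniformly via \cite[Lemma 3.2]{IMS1}. You would need an analogous quantitative argument.
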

\begin{proof}
Since $\mathcal{GL}_N$ is an open subset of $\R^{N\times N}$, we can find $\delta>0$ (only depending on $N$) s.t.\ $B_{2\delta}(I)\subset\mathcal{GL}_N$. Choose $A\in B_\delta(I)$, $C=C(N)>0$ s.t.\ $|A|,|A^{-1}|\le C$. By translation invariance and boundedness of $\Omega_{\rho/2}$, we may assume $0\in\partial\Omega$ and prove that $g_\eps\to g_0$ locally uniformly in $B_{\delta}(I)\times D_{\rho/2}$ as $\eps\to 0^+$, for some $g_0$ with $\|g_0\|_{L^\infty(B_{\delta}(I)\times D_{\rho/2})}\le C$ (allowing $C>0$ to grow bigger and eventually depend on $N$, $p$, $s$, and $\Omega$). As the estimates will be uniform with respect to $A\in B_{\delta}(I)$, we will omit the dependence on $A$ for simplicity.
\vskip2pt
\noindent
Observe that restricting the domain of integration in \eqref{geps} to $D_{3\rho/4}\cap B_{\eps}^{c}(x)$ has the sole effect of adding an equi-bounded term to both $g_{\eps}$ and $g_{0}$, so that we can actually prove the statement for 
\[\tilde{g}_{\eps}(x)=\int_{D_{3\rho/4}\cap B_\eps^c(x)}\frac{(\ds(x)-\ds(y))^{p-1}}{|A(x-y)|^{N+ps}}\,dy.\]
\vskip2pt
\noindent
Since $\partial\Omega$ is of class $C^{1,1}$, there exists a diffeomorphism $\Phi\in C^{1,1}(\R^N,\R^N)$ s.t.\ $\Phi(0)=0$, ${\rm d}_\Omega(x)=(x'_N)_+$ for all $x\in D_{\rho/2}$, $x'=\Phi(x)$, and $\Phi(D_{3\rho/4})\subseteq D'_{\rho'}$ where $\rho'=\rho'(\Omega)>0$ and
\[D'_{\rho'}=\big\{x'\in\R^N:\,|x'|<\rho',\,x'_N>0\big\}.\]
Moreover we may assume (taking $C>0$ bigger if necessary) that for all $x\in D_\rho$, $x'\in D'_{\rho'}$
\beq\label{unif}
\frac{1}{C}\le |D\Phi(x)|,\,|D\Phi^{-1}(x')|\le C.
\eeq
Now fix $x\in D_{\rho/2}$ and set 
\[x'=\Phi(x),\quad  M_x=D\Phi^{-1}(x')=(D\Phi(x))^{-1}.\]
Fix as well $\eps\in\ ]0, \rho/4[$. We act on $\tilde{g}_\eps(x)$ with the change of variables $y'=\Phi(y)$ and we get
\begin{align*}
\tilde{g}_\eps(x) &= \int_{\Phi(D_{3\rho/4}\cap B_\eps^c(x))}\frac{((x'_N)_+^s-(y'_N)_+^s)^{p-1}}{|A(\Phi^{-1}(x')-\Phi^{-1}(y'))|^{N+ps}} |{\rm det}D\Phi^{-1}(y')|\,dy' \\
 &= \int_{\Phi(D_{3\rho/4}\cap B_\eps^c(x))}\frac{((x'_N)_+^s-(y'_N)_+^s)^{p-1}}{|AM_x(x'-y')|^{N+ps}}K(x',y')\,dy',
\end{align*}
where we have set
\[K(x',y')=\frac{|AM_x(x'-y')|^{N+ps}|{\rm det}D\Phi^{-1}(y')|}{|A(\Phi^{-1}(x')-\Phi^{-1}(y'))|^{N+ps}}.\]
Again we can add a bounded term to $\tilde{g}_{\eps}$ and instead consider
\beq\label{cv1}
h_{\eps}(x)=\int_{\Phi(B_\eps^c(x))}\frac{((x'_N)_+^s-(y'_N)_+^s)^{p-1}}{|AM_x(x'-y')|^{N+ps}}K(x',y')\,dy'.
\eeq
By \eqref{unif} we have for all $x'\in D'_{\rho'}$, $y'\in\Phi(B_\eps^c(x))$
\[\frac{1}{C}\le K(x',y')\le C.\]
We introduce a linearized operator $L_x:\R^N\to\R^N$ by setting for all $y\in\R^N$
\[L_x(y)=\Phi(x)+D\Phi(x)(y-x),\]
which by Taylor expansion and $\Phi\in C^{1,1}(\R^N)$ satisfies for all $y\in\R^N$
\[|L_x(y)-\Phi(y)|\le C|x-y|^2.\]
In turn, this implies the geometric inequality
\[d_\eps:={\rm dist}_{\mathcal H}\big(\Phi(B_\eps(x)),\,L_x(B_\eps(x))\big)\le C\eps^2.\]
Set
\[\Delta_\eps(x)=\Phi(B_\eps^c(x))\Delta L_x(B_\eps^c(x)),\]
then by the inequality above and $\Phi(B_\eps(x))\Delta L_x(B_\eps(x))\subseteq B_{d_\eps}(\partial L_x(B_\eps(x)))$ we have 
\beq\label{cv2}
|\Delta_\eps(x)|\le C{\mathcal H}^{N-1}(\partial L_x(B_\eps^c(x)))\eps^2 \le C\eps^{N+1}.
\eeq
We split \eqref{cv1} as:
\begin{align*}
h_\eps(x) &= \Big[\int_{\Phi(B_\eps^c(c))\setminus L_x(B_\eps^c(x))}\frac{((x'_N)_+^s-(y'_N)_+^s)^{p-1}}{|AM_x(x'-y')|^{N+ps}}K(x',y')\,dy' \\
&\quad - \int_{L_x(B_\eps^c(x))\setminus\Phi(B_\eps^c(x))}\frac{((x'_N)_+^s-(y'_N)_+^s)^{p-1}}{|AM_x(x'-y')|^{N+ps}}K(x',y')\,dy'\Big] \\
&\quad +\int_{L_x(B_\eps^c(x))}\frac{((x'_N)_+^s-(y'_N)_+^s)^{p-1}}{|AM_x(x'-y')|^{N+ps}}K(x',y')\,dy' \\
&= h_\eps^1(x)+h_\eps^2(x),
\end{align*}
and we estimate separately $h_\eps^1(x)$ and $h_\eps^2(x)$. By $s$-H\"older continuity of the function $y'\to (y'_N)_+^s$, estimates on $|M_x|$, \eqref{cv2}, and direct integration we have
\begin{align*}
|h_\eps^1(x)| &\le C\int_{\Delta_\eps(x)}\frac{|(x'_N)_+^s-(y'_N)_+^s|^{p-1}}{|AM_x(x'-y')|^{N+ps}}\,dy' \\
&\le C|A^{-1}|^{N+ps}\int_{\Delta_\eps(x)}\frac{dy'}{|x'-y'|^{N+s}} \le \frac{C|{\rm d}_{\eps}(x)|}{\eps^{N+s}}\le C\eps^{1-s},
\end{align*}
which by $s\in\ ]0,1[$ implies
\beq\label{cv3}
h_\eps^1\to 0 \quad \text{in $L^\infty(D_{\rho/2})$ as $\eps\to 0^{+}$.}
\eeq
Now we turn to $h_\eps^2(x)$, which we split further:
\begin{align*}
h_\eps^2(x) &= \int_{L_x(B_\eps^c(x))}\frac{((x'_N)_+^s-(y'_N)_+^s)^{p-1}}{|AM_x(x'-y')|^{N+ps}}|{\rm det}M_x|\,dy' \\
&\quad+ \int_{L_x(B_\eps^c(x))}\frac{((x'_N)_+^s-(y'_N)_+^s)^{p-1}}{|AM_x(x'-y')|^{N+ps}}(K(x',y')-|{\rm det}M_x|)\,dy' \\
&= h_\eps^3(x)+h_\eps^4(x).
\end{align*}
We first deal with $h_\eps^3(x)$, using the change of variables $z'=M_x(x'-y')$ and setting $x''=M_x x'$, $\lambda=|M_x^{-T}e_N|$, and $v'=\lambda^{-1}M_x^{-T}e_N$:
\begin{align*}
h_\eps^3(x) &= \int_{B_\eps^c}\frac{\big(((M_x^{-1}x'')\cdot e_N)_+^s-(M_x^{-1}(x''-z')\cdot e_N)_+^s\big)^{p-1}}{|Az'|^{N+ps}}\,dz' \\
&= \lambda^{(p-1)s}\int_{B_\eps^c}\frac{\big((x'\cdot v')_+^s-((x'-z')\cdot v')_+^s\big)^{p-1}}{|Az'|^{N+ps}}\,dz'.
\end{align*}
By rotational invariance and \cite[Lemma 3.2]{IMS1}, we have
\beq\label{cv4}
h_\eps^3\to 0 \quad \text{in $L^\infty_{\rm loc}(D_{\rho/2})$ as $\eps\to 0^{+}$}
\eeq
(this is where the convergence turns locally uniform instead of uniform).
To estimate $h_\eps^4(x)$, we can again add a bounded term and consider instead
\[h_{\eps}^{5}(x):=\int_{L_x(B_1(x))\setminus L_x(B_\eps(x))}\frac{((x'_N)_+^s-(y'_N)_+^s)^{p-1}}{|AM_x(x'-y')|^{N+ps}}(K(x',y')-|{\rm det}M_x|)\,dy'.\]
By \cite[Eq.\ (3.7)]{IMS1} we have
\[\big|K(x',y')-|{\rm det}M_x|\big|\le C|x'-y'| \quad \text{for all $x'\in D'_{\rho'}$, $y'\in L_x(B_\eps^c(x))$}\]
and using H\"older continuity, we have
\[\frac{|(x'_N)_+^s-(y'_N)_+^s|^{p-1}}{|AM_x(x'-y')|^{N+ps}}|K(x',y')-|{\rm det}M_x||\le C\frac{|x'-y'|^{(p-1)s+1}}{|AM_x(x'-y')|^{N+ps}} \le \frac{C}{|x'-y'|^{N+s-1}},\]
and the latter function lies in $L^1(L_x(B_1(x)))$. Now, letting
\[h^{5}(x):=\int_{L_x(B_1(x))\setminus L_x(B_\eps(x))}\frac{((x'_N)_+^s-(y'_N)_+^s)^{p-1}}{|AM_x(x'-y')|^{N+ps}}(K(x',y')-|{\rm det}M_x|)\,dy'.\]
we have, via direct integration and $L_{x}(B_{1}(x))\subseteq B_{C}(x')$,
\[|h^{5}(x)|\le C\int_{L_x(B_1(x))\setminus L_x(B_\eps(x))}\frac{dy'}{|x'-y'|^{N+s-1}},\]
and similarly, by $L_{x}(B_{\eps}(x))\subseteq B_{C\eps}(x')$ (see \eqref{unif}),
\[|h^{5}_{\eps}(x)-h^{5}(x)|\le C\int_{L_x(B_\eps(x))}\frac{dy'}{|x'-y'|^{N+s-1}}\le C\eps^{1-s}.\]
Again by $s\in(0,1)$, we deduce that $h^5_\eps\to h^5$ in $L^\infty_{\rm loc}(D_{\rho/2})$ as $\eps\to 0^+$. Taking into account the several splittings and \eqref{cv3}, \eqref{cv4}, we obtain the claim.
\end{proof}

\noindent
By virtue of the previous result, we are able to construct our first  barrier:

\begin{lemma}\label{lowerbarrier}
{\rm (Barrier/1)} Let $\partial\Omega$ be $C^{1,1}$, $R\in\ ]0,\rho/4[$, $\varphi\in C^\infty_c(B_1)$ s.t.\ $0\le\varphi\le 1$ in $B_1$, and for all $\lambda\in\R$, $x\in\R^N$ set
\[w_\lambda(x)=\Big(1+\lambda\varphi\Big(\frac{2x}{R}\Big)\Big)\ds(x).\]
Then, there exist $\lambda_1=\lambda_1(N,p,s,\Omega,\varphi)>0$, $C_6=C_6(N,p,s,\Omega,\varphi)>0$ s.t.\ for all $|\lambda|\le\lambda_1$
\[|\fpl w_\lambda|\le C_6\Big(1+\frac{|\lambda|}{R^s}\Big) \quad \text{in $D_{R/2}$.}\]
\end{lemma}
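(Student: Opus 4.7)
My strategy is to realize $w_\lambda$ as a $C^{1,1}$-small deformation of $\ds$, so that the computation of $\fpl w_\lambda$ reduces via change of variables to the bounded quantity $g_0$ from Lemma~\ref{change}. Extend $\nabla{\rm d}_\Omega$ to a globally defined Lipschitz vector field $\eta\in C^{0,1}(\R^N,\R^N)$ coinciding with $\nabla{\rm d}_\Omega$ on $\Omega_{\rho/2}$, and define
\[\Phi_\lambda(x):=x+\tau_\lambda(x)\,\eta(x),\qquad \tau_\lambda(x):={\rm d}_\Omega(x)\big[(1+\lambda\varphi(2x/R))^{1/s}-1\big].\]
Since $\varphi$ is supported in $B_1$ and ${\rm d}_\Omega$ vanishes on $\Omega^c$, $\Phi_\lambda$ equals the identity off $D_{R/2}$. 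For $x\in D_{R/2}\subset\Omega_{\rho/2}$ the eikonal identity ${\rm d}_\Omega(x+t\nabla{\rm d}_\Omega(x))={\rm d}_\Omega(x)+t$ (valid for $|t|\le{\rm d}_\Omega(x)$), together with the definition of $\tau_\lambda$, gives the pointwise identity $w_\lambda(x)=\ds(\Phi_\lambda(x))$, which extends to all of $\R^N$.

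A direct computation, using $|\nabla^j\varphi(2\cdot/R)|\le CR^{-j}$ and ${\rm d}_\Omega\le R/2$ on the support of the correction, yields
\[\|D\Phi_\lambda-I\|_{L^\infty(\R^N)}\le C|\lambda|,\qquad \|D^2\Phi_\lambda\|_{L^\infty(\R^N)}\le C|\lambda|/R.\]
Choosing $\lambda_1=\lambda_1(N,p,s,\Omega,\varphi)$ sufficiently small, $\Phi_\lambda$ becomes a global $C^{1,1}$-diffeomorphism of $\R^N$ with $D\Phi_\lambda(x)\in B_\delta(I)$ everywhere ($\delta$ from Lemma~\ref{change}).

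Now fix $x\in D_{R/2}$, set $x_0:=\Phi_\lambda(x)\in D_{3R/4}\subset\Omega_{\rho/2}$, and perform the change of variables $z=\Phi_\lambda(y)$ in the singular integral defining $\fpl w_\lambda(x)$. Following the argument in the proof of Lemma~\ref{change}, the Hausdorff-distance term between $\Phi_\lambda(B_\eps^c(x))$ and $B_\eps^c(x_0)$ produces a vanishing contribution; setting $M_x:=D\Phi_\lambda^{-1}(x_0)\in B_\delta(I)$ and using the Taylor expansions $\Phi_\lambda^{-1}(z)=\Phi_\lambda^{-1}(x_0)+M_x(z-x_0)+O(|\lambda|/R)|z-x_0|^2$ together with $|\det D\Phi_\lambda^{-1}(z)|=|\det M_x|+O(|\lambda|/R)|z-x_0|$, I obtain the decomposition
\[\fpl w_\lambda(x)=2|\det M_x|\,g_0(M_x,x_0)+E_\lambda(x),\]
where $g_0$ is the function of Lemma~\ref{change}, uniformly bounded by $C_5$ on $B_\delta(I)\times\Omega_{\rho/2}$.

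For the remainder $E_\lambda(x)$, the corrections vanish outside $B_R(x_0)$ (since $\Phi_\lambda={\rm Id}$ there) and are pointwise dominated by $C(|\lambda|/R)|z-x_0|$ times the singular kernel. Using $|\ds(x_0)-\ds(z)|\le|x_0-z|^s$ and direct integration,
\[|E_\lambda(x)|\le C\frac{|\lambda|}{R}\int_{B_R(x_0)}\frac{dz}{|x_0-z|^{N+s-1}}\le C\frac{|\lambda|}{R^s},\]
which, combined with $2|\det M_x|\,|g_0(M_x,x_0)|\le C$, yields the required estimate. The main technical obstacle is to realize $\Phi_\lambda$ so that both the identity $w_\lambda=\ds\circ\Phi_\lambda$ and the quantitative scaling $\|D^2\Phi_\lambda\|_\infty\le C|\lambda|/R$ hold; that scaling is precisely what produces the correct $R^{-s}$ weight in the perturbation term.
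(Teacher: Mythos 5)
Your proposal follows essentially the same route as the paper's proof: you build the same $C^{1,1}$-small diffeomorphism (the paper writes $\Phi_\lambda(x)=x+\lambda\psi_\lambda(x)(x-\Pi_\Omega(x))$, which agrees with your $x+\tau_\lambda(x)\eta(x)$ on $\Omega_{\rho/2}$), verify the key identity $w_\lambda=\ds\circ\Phi_\lambda$, prove the derivative bounds $|D\Phi_\lambda-I|\le C|\lambda|$ and $|D^2\Phi_\lambda|\le C|\lambda|/R$ supported in $B_{R/2}$, and then change variables in the singular integral and appeal to Lemma~\ref{change} for the main term while estimating the kernel correction by the second-derivative bound. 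One small inaccuracy to flag: the kernel corrections do \emph{not} vanish for $|z-x_0|>R$, since $\Psi_\lambda(x_0)=x\ne x_0$ even though $\Psi_\lambda(z)=z$; they are merely $O(|\lambda|)$ there, and you still need the far-field contribution $\int_{B_R(x_0)^c}|\lambda|\,|z-x_0|^{-(N+s)}\,dz\le C|\lambda|/R^s$, which is what the paper's bound $|K(x',y')-|\det D\Psi_\lambda(x')||\le C|\lambda|\min\{|x'-y'|/R,\,1\}$ achieves in one stroke.
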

\begin{proof}
For $\lambda=0$, the conclusion follows from \cite[Theorem 3.6]{IMS1}. So, let $\lambda\in\R$ satisfy
\[0<|\lambda|\le\frac{1}{2\|\varphi\|_{L^\infty(B_1)}}.\]
Set for all $x\in\R^N$
\[\psi_\lambda(x)=\frac{1}{\lambda}\Big(\Big(1+\lambda\varphi\Big(\frac{2x}{R}\Big)\Big)^\frac{1}{s}-1\Big),\]
so $\psi_\lambda\in C^\infty_c(B_{R/2})$ and for all $x\in\R^N$
\[1+\lambda\varphi\Big(\frac{2x}{R}\Big)=(1+\lambda\psi_\lambda(x))^s.\]
Moreover, by the chain rule there exists $C>0$ (depending on $N$, $p$, $s$, $\Omega$, and $\varphi$) s.t.\ for all $x\in\R^N$
\beq\label{lbar1}
|\psi_\lambda(x)|+R|D\psi_\lambda(x)|+R^2|D^2\psi_\lambda(x)|\le C\chi_{B_{R/2}}(x).
\eeq
Since $\Pi_\Omega\in C^{1,1}(B_R, \partial\Omega)$, by taking $|\lambda|>0$ even smaller if necessary, we may set for all $x\in\R^N$
\[\Phi_\lambda(x)=x+\lambda\psi_\lambda(x)(x-\Pi_\Omega(x)),\]
thus defining a diffeomorphism $\Phi_\lambda\in C^{1,1}(\R^N,\R^N)$ s.t.\ $\Phi_\lambda(\Omega)=\Omega$, $\Phi_\lambda(x)=x$ for all $x\in B_{R/2}^{c}$, and $\Pi_\Omega(\Phi_\lambda(x))=\Pi_\Omega(x)$ for all $x\in D_R$. Besides we define $\Psi_\lambda=\Phi_\lambda^{-1}\in C^{1,1}(\R^N,\R^N)$. The key point is that $w_{\lambda}$ is actually equivalent to a distance function, up to the diffeomorphism $\Phi_{\lambda}$ of the domain. Indeed, with these notations, we have for all $x\in D_R$
\beq\label{lbar4}
\begin{split}
w_\lambda(x) &= (1+\lambda\psi_\lambda(x))^s|x-\Pi_\Omega(x)|^s = |\Phi_\lambda(x)-\Pi_\Omega(x)|^s \\
&= |\Phi_\lambda(x)-\Pi_\Omega(\Phi_\lambda(x))|^s = \ds(\Phi_\lambda(x)),
\end{split}
\eeq
We begin collecting some estimates on the first and second order derivatives of $\Phi_\lambda$, $\Psi_\lambda$ that will be used later.  For all $x, x'\in\R^N$ we claim
\beq\label{lbar2}
|D\Phi_\lambda(x)-I|\le C|\lambda|\chi_{B_{R/2}}(x),\quad |D\Psi_\lambda(x')-I|\le C|\lambda|\chi_{B_{R/2}}(x').
\eeq
Indeed, recall that $\Phi_\lambda=\Psi_\lambda=I$ in $B_{R/2}^c$. Instead, for all $x\in B_{R/2}$, $i,k\in\{1,\ldots N\}$ we have
\[\partial_i\Phi_\lambda^k(x)-\delta_{ik} = \lambda \Big[\partial_i\psi_\lambda(x)(x-\Pi_\Omega(x))^k+\psi_\lambda(x)(\delta_{ik}-\partial_i\Pi_\Omega^k(x))\Big]\]
(where $\xi^k$ denotes the $k$-th component of $\xi\in\R^N$, $\delta_{ik}$ is the Kronecker symbol and $\partial_{i}$ is the partial derivative with respect to $x_{i}$). By \eqref{lbar1}, this implies the first inequality in \eqref{lbar2}.
By further reducing $|\lambda|>0$ if necessary, the latter yields $|(D\Phi_\lambda(x))^{-1}|\le C$, hence, setting $x'=\Phi_\lambda(x)$,
\[|D\Psi_\lambda(x')-I| = |(D\Phi_\lambda(x))^{-1}(I-D\Phi_\lambda(x))| \le C|\lambda|\chi_{B_{R/2}}(x)=C|\lambda|\chi_{B_{R/2}}(x'),\]
(since $\Phi_{\lambda}(B_{R/2})=B_{R/2}$), which concludes the proof \eqref{lbar2}. 

\noindent
Regarding the second-order derivatives, for a. e. $x,x'\in\R^N$ we claim
\beq\label{lbar3}
|D^2\Phi_\lambda(x)|\le\frac{C|\lambda|}{R}\chi_{B_{R/2}}(x), \quad |D^2\Psi_\lambda(x')|\le\frac{C|\lambda|}{R}\chi_{B_{R/2}}(x').
\eeq
Indeed, for a. e.\ $x\in B_{R/2}$, $i,j,k\in\{1,\ldots N\}$ we have
\begin{align*}
\partial_{ij}\Phi^k_\lambda(x) &= \lambda \Big[ \partial_{ij}\psi_\lambda(x)(x-\Pi_\Omega(x))^k+ \partial_i\psi_\lambda(x)(\delta_{jk}-\partial_j\Pi_\Omega^k(x)) \\
&\qquad + \partial_j\psi_\lambda(x)(\delta_{ik}-\partial_i\Pi_\Omega^k(x))+\psi_\lambda(x)\partial_{ij}\Pi_\Omega^k(x)\Big],
\end{align*}
which by \eqref{lbar1} implies the first estimate in \eqref{lbar3}.
Regarding the second one, observe that $D^{2}\Psi_{\lambda}\equiv 0$ in $B_{R/2}^{c}$, while for $\Phi_{\lambda}(x)=x'\in B_{R/2}$, the chain rule gives, almost everywhere, 
\[
\partial_{ij}\Psi^{k}_{\lambda}(x')=-\partial_{\beta\gamma}\Phi^{\alpha}_{\lambda}(x)\partial_{i}\Psi^{\beta}_{\lambda}(x')\partial_{j}\Psi^{\gamma}_{\lambda}(x')\partial_{\alpha}\Psi^{k}_{\lambda}(x'),
\]
with the sum over repeated indexes convention. Due to the estimate for $D^{2}\Phi_{\lambda}$ and (from \eqref{lbar2}) $\|D\Psi_{\lambda}\|_{\infty}\le C$ when $|\lambda|$ is sufficiently small, we infer the second inequality in \eqref{lbar3}.

\noindent
Now set for all $\eps>0$, $x\in\R^N$
\[g_{\eps, \lambda}(x)=\int_{\{|\Phi_\lambda(x)-\Phi_\lambda(y)|\ge\eps\}}\frac{(\ds(\Phi_\lambda(x))-\ds(\Phi_\lambda(y)))^{p-1}}{|x-y|^{N+ps}}\,dy.\]
We claim that there exist $\lambda_{1},C>0$, depending only $N$, $p$, $s$, $\Omega$, and $\varphi$, s.t.\ for every $0<|\lambda|<\lambda_{1}$ there exists $g_{0, \lambda}\in L^\infty(D_{R/2})$ s.t.\
\beq\label{lbar5}
\|g_{0, \lambda}\|_{L^\infty(D_{R/2})}\le C\Big(1+\frac{|\lambda|}{R^s}\Big),\quad g_{\eps, \lambda}\to g_{0, \lambda} \ \text{in $L^\infty_{\rm loc}(D_{R/2})$, as $\eps\to 0^+$}.
\eeq
The path to \eqref{lbar5} begins with the change of variables $x'=\Phi_\lambda(x)$, $y'=\Phi_\lambda(y)$ (note that by the previous discussion $x'\in D_{R/2}$ whenever $x\in D_{R/2}$) and defining 
\[K(x',y'):=\frac{|D\Psi_\lambda(x')(x'-y')|^{N+ps}}{|\Psi_\lambda(x')-\Psi_\lambda(y')|^{N+ps}} |{\rm det}D\Psi_\lambda(y')|,\]
so that for all $x\in D_{R/2}$ we can rephrase
\begin{align*}
g_{\eps, \lambda}(x) &= \int_{B_\eps^c(x')}\frac{(\ds(x')-\ds(y'))^{p-1}}{|D\Psi_\lambda(x')(x'-y')|^{N+ps}}K(x',y')\,dy' \\
&= \int_{B_\eps^c(x')}\frac{(\ds(x')-\ds(y'))^{p-1}}{|D\Psi_\lambda(x')(x'-y')|^{N+ps}}|{\rm det}D\Psi_\lambda(x')|\,dy' \\
&\quad+ \int_{B_\eps^c(x')}\frac{(\ds(x')-\ds(y'))^{p-1}}{|D\Psi_\lambda(x')(x'-y')|^{N+ps}}(K(x',y')-|{\rm det}D\Psi_\lambda(x')|)\,dy' \\
&= g_{\eps,\lambda}^1(x)+g_{\eps,\lambda}^2(x).
\end{align*}
By \eqref{lbar2} and Lemma \ref{change}, taking if necessary $|\lambda|>0$ even smaller, the claim \eqref{lbar5} is true for $g_{\eps,\lambda}^1$, with corresponding $g_{0,\lambda}^{1}$ obeying $\|g_{0,\lambda}^{1}\|_{L^{\infty}(D_{R/2})}\le C$. Regarding $g_{\eps,\lambda}^2$, we split as follows:
\begin{align*}
K(x',y')-|{\rm det}D\Psi_\lambda(x')| &= \frac{|D\Psi_\lambda(x')(x'-y')|^{N+ps}}{|\Psi_\lambda(x')-\Psi_\lambda(y')|^{N+ps}}\big(|{\rm det}D\Psi_\lambda(y')|-|{\rm det}D\Psi_\lambda(x')|\big) \\
&\quad+ \Big(\frac{|D\Psi_\lambda(x')(x'-y')|^{N+ps}}{|\Psi_\lambda(x')-\Psi_\lambda(y')|^{N+ps}}-1\Big)|{\rm det}D\Psi_\lambda(x')| \\
&= K_1(x',y')+K_2(x',y').
\end{align*}
We first estimate $K_1(x',y')$, by applying the triangle inequality, Jacobi's formula for the derivative of a determinant, and estimates \eqref{lbar2}, \eqref{lbar3}:
\begin{align*}
|K_1(x',y')| &\le \|D\Phi_\lambda\|_{L^\infty(\R^N)}^{N+ps}\|D\Psi_\lambda\|_{L^\infty(\R^N)}^{N+ps}\big|{\rm det}D\Psi_\lambda(y')-{\rm det}D\Psi_\lambda(x')\big| \\
&\le C\Big|\int_0^1\frac{d}{dt}{\rm det}D\Psi_\lambda(x'+t(y'-x'))\,dt\Big| \\
&\le C\int_0^1\|D\Psi_\lambda\|_{L^\infty(\R^N)}^{N-1}|D^2\Psi_\lambda(x'+t(y'-x'))||x'-y'|\,dt \\
&\le \frac{C|\lambda|}{R}|x'-y'|\int_0^1\chi_{B_{R/2}}(x'+t(y'-x'))\,dt \\
&\le \frac{C|\lambda|}{R}|x'-y'|\min\Big\{1,\frac{R}{|x'-y'|}\Big\} \le C|\lambda|\min\Big\{\frac{|x'-y'|}{R},1\Big\},
\end{align*}
where the calculations above are justified for a.e.\ $y'\in \R^{N}$ since, by a well known property, Sobolev functions (${\rm det}D\Psi_\lambda$ in our case) are absolutely continuous on almost every line. 
Similarly, to estimate $K_2(x',y')$ we argue as in \cite[Lemma 3.4]{IMS1}, applying \eqref{lbar2}, \eqref{lbar3}, and Taylor's expansion with integral remainder:
\begin{align*}
|K_2(x',y')| &\le C\Big(\frac{|D\Psi_\lambda(x')(x'-y')|^2}{|\Psi_\lambda(x')-\Psi_\lambda(y')|^2}-1\Big) \\
&\le C\frac{\big|\Psi_\lambda(x')-\Psi_\lambda(y')+D\Psi_\lambda(x')(x'-y')\big|\,\big|\Psi_\lambda(x')-\Psi_\lambda(y')-D\Psi_\lambda(x')(x'-y')\big|}{|\Psi_\lambda(x')-\Psi_\lambda(y')|^2} \\
&\le \frac{C\|D\Phi_\lambda\|_{L^\infty(\R^N)}^2}{|x'-y'|^2}\big(2\|D\Psi_\lambda\|_{L^\infty(\R^N)}|x'-y'|\big)\Big|\int_0^1(1-t)\frac{d^2}{dt^2}\Psi_\lambda(x'+t(y'-x'))\,dt\Big| \\
&\le \frac{C}{|x'-y'|}\int_0^1\frac{|\lambda||x'-y'|^2}{R}\chi_{B_{R/2}}(x'+t(y'-x'))\,dt \\
&\le \frac{C|\lambda||x'-y'|}{R}\min\Big\{1,\frac{R}{|x'-y'|}\Big\} \le C|\lambda|\min\Big\{\frac{|x'-y'|}{R},1\Big\}.
\end{align*}
Summing up the last relations, we have for all $x'\in D_{R/2}$ and a.e.\ $y'\in\R^N$
\[\big|K(x',y')-|{\rm det}D\Psi_\lambda(x')|\big|\le C|\lambda|\min\Big\{\frac{|x'-y'|}{R},1\Big\}.\]
Set
\[h(x',y')=\frac{|\ds(x')-\ds(y')|^{p-1}}{|D\Psi_\lambda(x')(x'-y')|^{N+ps}}\big|K(x',y')-|{\rm det}D\Psi_\lambda(x')|\big|,\]
so that
\[g_{\eps,\lambda}^2(x)=\int_{B_\eps^c(x')}h(x',y')\,dy'.\]
Using the previous estimate and the $s$-H\"older continuity of $\ds$, we get for all $x'\in D_{R/2}$
\begin{align*}
\|h(x',\cdot)\|_{L^1(\R^N)} 
&\le C|\lambda|\int_{\R^N}\frac{|\ds(x')-\ds(y')|^{p-1}}{|D\Psi_\lambda(x')(x'-y')|^{N+ps}}\min\Big\{\frac{|x'-y'|}{R},1\Big\}\,dy' \\
&\le C|\lambda|\|D\Phi_\lambda\|_{L^\infty(\R^N)}^{N+ps}\int_{\R^N}\frac{1}{|x'-y'|^{N+s}}\min\Big\{\frac{|x'-y'|}{R},1\Big\}\,dy' \\
&\le C|\lambda|\Big[\int_{\{|z'|<R\}}\frac{dz'}{|z'|^{N+s-1}}+\int_{\{|z'|\ge R\}}\frac{dz'}{|z'|^{N+s}}\Big] \le \frac{C|\lambda|}{R^s}.
\end{align*}
By an entirely similar argument to the one used to deal with $h^{5}$ in the previous Lemma, we obtain the claim \eqref{lbar5} for $g^{2}_{\eps,\lambda}$ as well, with corresponding $g^{2}_{0,\lambda}$ obeying $\|g^{2}_{0,\lambda}\|_{L^{\infty}(D_{R/2})}\le C|\lambda|/R^{s}$. Finally, recalling \eqref{lbar4} and applying \cite[Corollary 2.7]{IMS1}, we conclude that, whenever $|\lambda|\le\lambda_1$,
\[\fpl w_{\lambda}(x)=\lim_{\eps\to 0^{+}} g_{\eps}(x),\]
and therefore 
\[|\fpl w_\lambda|\le \|g_{0}\|_{L^{\infty}(D_{R/2})}\le C_6\Big(1+\frac{|\lambda|}{R^s}\Big) \quad \text{in $D_{R/2}$,}\]
for convenient $\lambda_1,C_6>0$ depending on $N$, $p$, $s$, $\Omega$, and $\varphi$.
\end{proof}

\begin{remark}
In the case when $\Omega$ is a half space, we get the cleaner estimate
\[|\fpl w_{\lambda}|\le \frac{C}{R^{s}}|\lambda| \quad \text{in $D_{R/2}$,}\]
for all sufficiently small $|\lambda|$ depending on $\varphi$.
\end{remark}

\noindent
The next result yields a lower bound on the supersolution of \eqref{super} similar to that given in Lemma \ref{lowerbig}, but for small excess (defined in \eqref{lq}):

\begin{lemma}\label{lowersmall}
Let $\partial\Omega$ be $C^{1,1}$, $u\in\widetilde{W}^{s,p}(D_R)$ solve \eqref{super} and $p\ge 2$. Then, for all $\theta\ge 1$ there exist $C_\theta=C_\theta(N,p,s,\Omega,\theta)>1$, $\sigma_\theta=\sigma_\theta(N,p,s,\Omega,\theta)\in(0,1]$ s.t.\ for all $R\in \ ]0, \rho/4[$
\[\Ex(u, m , R)\le m\theta \quad \Longrightarrow \quad \inf_{D_{R/2}}\Big(\frac{u}{\ds}-m\Big)\ge\sigma_\theta\Ex(u, m , R)-C_\theta(m^{p-1}+K)^\frac{1}{p-1}R^\frac{s}{p-1}-C_\theta HR^s.\]
\end{lemma}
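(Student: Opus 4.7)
Let $\ell := \Ex(u,m,R)$. Since $u \ge m\ds$ in $\R^N$, the claimed inequality is automatic whenever its right-hand side is nonpositive, so I may assume the \emph{clearing condition}
\[
\sigma_\theta \ell > C_\theta (m^{p-1} + K)^{1/(p-1)} R^{s/(p-1)} + C_\theta HR^s,
\]
for $\sigma_\theta, C_\theta > 0$ to be fixed. This supplies the pointwise bounds $m \le (\sigma_\theta/C_\theta)\ell\,R^{-s/(p-1)}$, $K \le (\sigma_\theta/C_\theta)^{p-1}\ell^{p-1}/R^s$, and $H \le (\sigma_\theta/C_\theta)\ell\,R^{-s}$. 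Under this condition I build a lower barrier inspired by Lemma~\ref{lowerbarrier}, modify it on the satellite ball $\tilde B_R$ via the nonlocal superposition principle, and conclude via Proposition~\ref{comp}.

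Fix $\varphi\in C_c^\infty(B_1)$ with $0\le\varphi\le 1$ and $\varphi\equiv 1$ on $B_{1/2}$, and set
\[
w(x) = m\Big(1 + \tfrac{\lambda}{m}\,\varphi(x/R)\Big)\ds(x) = \bigl(m + \lambda\,\varphi(x/R)\bigr)\ds(x), \quad \lambda := \sigma_\theta\, \ell.
\]
Applying Lemma~\ref{lowerbarrier} at radius $2R$ in place of $R$ (so that its $D_{R/2}$ becomes our $D_R$), and using the $(p-1)$-homogeneity of $\fpl$, yields $|\fpl w| \le Cm^{p-1} + Cm^{p-2}\lambda/R^s$ in $D_R$, provided $\lambda/m \le \lambda_1$; since $\ell \le m\theta$, this reduces to $\sigma_\theta\theta \le \lambda_1$, constraining $\sigma_\theta$ in terms of $\theta$. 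By~\eqref{bt}, $\tilde B_R\subset D_{2R}\setminus D_{3R/2}$ lies outside $B_R \supset \mathrm{supp}\,\varphi(\cdot/R)$, so $w = m\ds \le u$ on $\tilde B_R$. Setting $\widetilde w := w + (u-w)\chi_{\tilde B_R}$, Proposition~\ref{spp} combined with inequality~\eqref{ab}, applied to $a = w(x)-u(y)$, $b = u(y) - m\ds(y) \ge 0$, gives
\[
\fpl \widetilde w(x) \le \fpl w(x) - 2^{3-p}\int_{\tilde B_R}\frac{(u(y) - m\ds(y))^{p-1}}{|x-y|^{N+ps}}\,dy \quad \text{in $D_R$.}
\]
Since $|x-y|\simeq R$ and $\ds(y)\simeq R$ on $\tilde B_R$ for $x\in D_R$, Jensen's inequality (exactly as in the proof of Lemma~\ref{lowerbig}) bounds the integral below by $c\,\ell^{p-1}/R^s$.

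Since $\fpl u \ge -K-m^{p-2}H$, the comparison step amounts, after multiplying through by $R^s$, to
\[
c\,\ell^{p-1} \ge CR^s\,m^{p-1} + C\,m^{p-2}\lambda + K R^s + m^{p-2} H R^s.
\]
The terms $Cm^{p-1}R^s$ and $KR^s$ follow at once from the clearing condition raised to the $(p-1)$-th power. The main obstacle is the mixed tail contribution $m^{p-2}HR^s$: a naive Young splitting produces $H^{p-1}R^s$, which would force the strictly stronger assumption $\ell\ge cHR^{s/(p-1)}$ rather than $\ell\ge cHR^s$. I bypass this by substituting the pointwise bounds extracted from the clearing condition, which gives
\[
m^{p-2}HR^s \le \Bigl(\tfrac{\sigma_\theta}{C_\theta}\Bigr)^{\!p-1}\ell^{p-1}\,R^{-s(p-2)/(p-1)},
\]
absorbable into $c\,\ell^{p-1}/R^s$ once $(\sigma_\theta/C_\theta)^{p-1}\le c'\rho^{s/(p-1)}$, a bound depending only on $\theta$, $N$, $p$, $s$, $\Omega$ via $\rho=\rho(\Omega)$ and $R\le\rho/4$. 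The remaining cross term $Cm^{p-2}\lambda = C\sigma_\theta m^{p-2}\ell \le C\sigma_\theta\theta\, m^{p-1}$ is disposed of by the same device. Once all terms are absorbed, $\widetilde w \le u$ on $D_R^c$ (since $w=m\ds\le u$ there, while $\widetilde w = u$ on $\tilde B_R$), hence Proposition~\ref{comp} delivers $\widetilde w \le u$ in $\R^N$; on $B_{R/2}\subset\{\varphi(\cdot/R)\equiv 1\}$ this reads $u/\ds \ge m + \sigma_\theta\ell$, which is the desired lower bound on $D_{R/2}$.
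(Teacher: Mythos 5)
Your overall scaffolding — the multiplicatively perturbed distance barrier from Lemma~\ref{lowerbarrier}, the nonlocal superposition on $\tilde B_R$, the comparison principle, and splitting on whether the right-hand side is already nonpositive — is exactly the paper's. The gap is in the one elementary inequality you use to estimate the superposition integral, and it is fatal for $p>2$.

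You apply \eqref{ab} to the integrand, giving a gain of order $\ell^{p-1}/R^s$ with $\ell=\Ex(u,m,R)$. The paper instead applies Lagrange's theorem to $t\mapsto t^{p-1}$, exploiting that \emph{both} arguments $u(y)-w_\lambda(x)$ and $w_\lambda(y)-w_\lambda(x)$ are bounded below by $\tfrac{mR^s}{2}\bigl((3/2)^s-1\bigr)$ (this is where \eqref{ls1} and the smallness restriction $\lambda_1\le\tfrac{(3/2)^s-1}{2}$ enter). That produces the much larger gain
\[
(u(y)-w_\lambda(x))^{p-1}-(w_\lambda(y)-w_\lambda(x))^{p-1}\ \ge\ \frac{m^{p-2}R^{(p-2)s}}{C}\,\bigl(u(y)-m\ds(y)\bigr),
\]
leading to $\fpl v_\lambda \le Cm^{p-1} - \tfrac{m^{p-2}\ell}{CR^s} + \tfrac{Cm^{p-2}\lambda}{R^s}$ with the crucial common factor $m^{p-2}$ in front of both the gain and the barrier's cross term $Cm^{p-2}\lambda/R^s = C\sigma_\theta m^{p-2}\ell/R^s$. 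The cross term is then absorbed by choosing $\sigma_\theta$ small in a way independent of $m$, $\ell$, $R$, and the $m^{p-2}H$ part of the right-hand side cancels directly against the $m^{p-2}$ in the gain, yielding the linear-in-$H$ conclusion.

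With your weaker gain $c\ell^{p-1}/R^s$, absorbing the cross term $C\sigma_\theta m^{p-2}\ell/R^s$ requires $\ell^{p-2}\gtrsim \sigma_\theta\,m^{p-2}$, i.e.\ a \emph{lower} bound on $\ell/m$. The hypothesis $\ell\le m\theta$ and the clearing condition together only give $m\lesssim (\sigma_\theta/C_\theta)\,\ell\,R^{-s/(p-1)}$ and $\ell\le m\theta$: $m$ can be larger than $\ell$ by a factor of order $R^{-s/(p-1)}$. Substituting this into the cross term (or into $m^{p-2}HR^s$, as you do) leaves a residual factor $R^{-s(p-2)/(p-1)}$, and the absorption condition becomes $(\sigma_\theta/C_\theta)^{p-1}\le c\,R^{s(p-2)/(p-1)}$. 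For $p>2$ the right-hand side vanishes as $R\to 0^+$, so no choice of $\sigma_\theta,C_\theta$ depending only on $N,p,s,\Omega,\theta$ can work for all $R\in\ ]0,\rho/4[$. (Your sign is also off: you wrote $c'\rho^{s/(p-1)}$, but the constraint actually scales like $R^{s(p-2)/(p-1)}$, which goes the wrong way.) For $p=2$ the exponent is zero and your argument is fine, but the lemma is asserted for all $p\ge 2$. The fix is to replace \eqref{ab} by the mean-value bound above, which restores the $m^{p-2}$ factor in the gain and makes the rest of your plan go through unchanged.
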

\begin{proof}
Let $\varphi\in C^\infty_c(B_1)$ be s.t.\ $0\le\varphi\le 1$, $\varphi= 1$ in $ B_{1/2}$, and set for all $\lambda>0$, $x\in\R^N$
\[w_\lambda(x)=m\Big(1+\lambda\varphi\Big(\frac{x}{R}\Big)\Big)\ds(x).\]
Then $w_\lambda\in W^{s,p}(D_R)$ and satisfies
\beq\label{ls1}
\inf_{\tilde B_R}w_\lambda\ge m\Big(\frac{3R}{2}\Big)^s, \quad \sup_{D_R}w_\lambda\le m(1+\lambda)R^s,
\eeq
where $\tilde B_R$ is defined as in \eqref{bt}. By homogeneity and Lemma \ref{lowerbarrier} (with $R$ in the place of $R/2$) we can find $\lambda_1>0$ and $C_6>1$ s.t.\ for all $\lambda\in(0,\lambda_1]$
\beq\label{ls2}
\fpl w_\lambda\le C_6 m^{p-1}\Big(1+\frac{\lambda}{R^s}\Big) \quad \text{in $D_R$.}
\eeq
With no loss of generality we may assume
\[0<\lambda_1\le\min\Big\{1,\frac{(3/2)^s-1}{2}\Big\}.\]
Now set for all $x\in\R^N$
\[v_\lambda(x)=\begin{cases}
w_\lambda(x) & \text{if $x\in\tilde B_R^c$} \\
u(x) & \text{if $x\in\tilde B_R$.}
\end{cases}\]
Clearly, since $\tilde B_R$ is bounded and at a positive distance from $D_R$, we can apply Proposition \ref{spp} and deduce that $v_\lambda\in\widetilde{W}^{s,p}(D_R)$ and for all $x\in D_R$
\beq\label{ls3}
\fpl v_\lambda(x) = \fpl w_\lambda(x)+2\int_{\tilde B_R}\frac{(w_\lambda(x)-u(y))^{p-1}-(w_\lambda(x)-w_\lambda(y))^{p-1}}{|x-y|^{N+ps}}\,dy.
\eeq
We need to estimate the integral in \eqref{ls3}. We note that, for all $x\in D_R$ and $y\in\tilde B_R$, by \eqref{super} we have $u(y)\ge m\ds(y)\ge w_\lambda(y)$. Using \eqref{ls1}, we have as well
\[u(y)-w_\lambda(x) \ge w_\lambda(y)-w_\lambda(x) \ge \frac{mR^s}{2}\Big(\Big(\frac{3}{2}\Big)^s-1\Big).\]
By Lagrange's theorem we deduce
\[(u(y)-w_\lambda(x))^{p-1}-(w_\lambda(y)-w_\lambda(x))^{p-1} \ge \frac{m^{p-2}R^{(p-2)s}}{C}(u(y)-m\ds(y)).\]
Plugging \eqref{ls2} and these estimates into \eqref{ls3} and recalling the properties \eqref{bt} of $\tilde B_{R}$, we get 
\begin{align*}
\fpl v_\lambda(x) &\le C_6m^{p-1}\Big(1+\frac{\lambda}{R^s}\Big)-2\frac{m^{p-2}R^{(p-2)s}}{C}\int_{\tilde B_R}\frac{u(y)-m\ds(y)}{|x-y|^{N+ps}}\,dy \\
&\le C_6m^{p-1}\Big(1+\frac{\lambda}{R^s}\Big)-\frac{2m^{p-2}}{CR^s}\dashint_{\tilde B_R}\Big(\frac{u(y)}{\ds(y)}-m\Big)\,dy \\
&\le Cm^{p-1}+\frac{m^{p-2}}{R^s}\Big(C\lambda m-\frac{\Ex(u, m , R)}{C}\Big),
\end{align*}
for all $x\in D_R$. We then want to find suitable $\sigma_{\theta}, C_{\theta}, \lambda$ s.t.\ either the thesis is trivial, or
\[Cm^{p-1}+\frac{m^{p-2}}{R^s}\Big(C\lambda m-\frac{\Ex(u, m , R)}{C}\Big)\le -K-m^{p-2}H,\]
allowing by comparsion to infer $u\ge w_{\lambda}$ in $D_{R}$. As it turns out, this reduces to an elementary set of inequalities, which can be solved for $\lambda$ being the right quantity to get the conclusion.
\vskip2pt
\noindent
We thus fix $\theta\ge 1$, set
\[\sigma_\theta=\frac{\lambda_1}{2\theta C^2},  \quad C_\theta=\sigma_\theta\max\big\{4C,\,(4C^2\theta^{p-2})^\frac{1}{p-1}\big\}, \quad \lambda=\frac{\sigma_\theta \Ex(u, m , R)}{m},\]
and assume
\beq\label{ls4}
\Ex(u, m , R)\le m\theta.
\eeq
By the choice of constants and \eqref{ls4} we have
\[\lambda\le\frac{\lambda_1}{2C^2}<\lambda_1, \quad C\lambda m\le\frac{\Ex(u, m , R)}{2C},\]
so by the estimate above
\beq\label{ls5}
\fpl v_\lambda\le Cm^{p-1}-\frac{m^{p-2}\Ex(u, m , R)}{2CR^s} \quad \text{in $D_R$.}
\eeq
Being the left-hand side of the thesis non-negative by assumption, we can suppose
\[\sigma_\theta\Ex(u, m , R)\ge C_\theta(m^{p-1}+K)^\frac{1}{p-1}R^\frac{s}{p-1}+C_\theta HR^s.\]
In particular, by the choice of $C_{\theta}$ and \eqref{ls4} (recall that $C\ge 1\ge \sigma_{\theta}$)
\begin{align*}
\Ex(u, m , R)^{p-1} &\ge \frac{C_\theta^{p-1}(m^{p-1}+K)R^s}{\sigma_\theta^{p-1}} \ge 4C^{2}\theta^{p-2}(m^{p-1}+K)R^s \\
&\ge \frac{4C\Ex(u, m , R)^{p-2}}{m^{p-2}}(Cm^{p-1}+K)R^s.
\end{align*}
The last two inequalities lead to
\[m^{p-2}\Ex(u, m , R)\ge\begin{cases}
4C(Cm^{p-1}+K)R^s \\[3pt]
\displaystyle \frac{C_\theta}{\sigma_\theta}m^{p-2}HR^s \ge 4Cm^{p-2}HR^s,
\end{cases}\]
and by summing up to
\[m^{p-2}\Ex(u, m , R)\ge 2C(Cm^{p-1}+K+m^{p-2}H)R^s.\]
Thus, by \eqref{ls5} and \eqref{super} we have
\[\begin{cases}
\fpl v_\lambda\le -K-m^{p-2}H \le\fpl u & \text{in $D_R$} \\
v_\lambda\le u & \text{in $D_R^c$,}
\end{cases}\]
which by Proposition \ref{comp} implies $v_\lambda\le u$ in $\R^N$. In particular, recalling the definitions of $w_\lambda$ and $\lambda$, for all $x\in D_{R/2}$ we have
\[\frac{u(x)}{\ds(x)}-m \ge \frac{w_\lambda(x)}{\ds(x)}-m = m\lambda \ge \sigma_\theta\Ex(u, m , R),\]
which gives the conclusion.
\end{proof}

\noindent
Finally, we localize  the global bound from below in \eqref{super} and prove the main result of this section, i.e., the lower bound on supersolutions of \eqref{dp} type problems {\em locally} bounded from below by a multiple of $\ds$. Precisely, we deal, for some $\tilde K,m\ge 0$, with the problem
\beq\label{superloc}
\begin{cases}
\fpl u\ge -\tilde K & \text{in $D_R$} \\
u\ge m\ds & \text{in $D_{2R}$.}
\end{cases}
\eeq

\begin{proposition}\label{lower}
{\rm (Lower bound)} Let $\partial\Omega$ be $C^{1,1}$, $u\in\widetilde{W}^{s,p}_0(D_R)$ solve \eqref{superloc} and $p\ge 2$. Then, for all $\eps>0$ there exist $\tilde C_\eps=\tilde C_\eps(N,p,s,\Omega,\eps)>0$ and two more constants $\sigma_2=\sigma_2(N,p,s,\Omega)\in(0,1]$, $C_7=C_7(N,p,s,\Omega)>1$ s.t.\ for all $R\in \ ]0, \rho/4[$
\begin{align*}
\inf_{D_{R/2}}\Big(\frac{u}{\ds}-m\Big) &\ge \sigma_2\Ex(u, m , R)-\eps\Big\|\frac{u}{\ds}-m\Big\|_{L^\infty(D_R)}  - C_7{\rm tail}_1\Big(\Big(m-\frac{u}{\ds}\Big)_+,2R\Big)R^s\\
&\quad-\tilde C_\eps\Big[m+\tilde K^\frac{1}{p-1}+{\rm tail}_{p-1}\Big(\Big(m-\frac{u}{\ds}\Big)_+,2R\Big)\Big]R^\frac{s}{p-1}.
\end{align*}
\end{proposition}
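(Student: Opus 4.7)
The plan is to reduce the \emph{local} lower bound hypothesis $u\ge m\ds$ in $D_{2R}$ to the \emph{global} situation of \eqref{super}, by replacing $u$ with its truncation
\[
v=u\vee m\ds,
\]
so that $v\ge m\ds$ in all of $\R^N$ (recall $\ds=0$ outside $\Omega$). Since by \eqref{superloc} one has $u\ge m\ds$ in $D_{2R}$, we get $v\equiv u$ on $D_{2R}$, and in particular $v\equiv u$ on the ball $\tilde B_R\subset D_{2R}\setminus D_{3R/2}$ and on $D_{R/2}$. This identity has two pleasant consequences: the excess is unchanged, $\Ex(v,m,R)=\Ex(u,m,R)$, and $\inf_{D_{R/2}}(v/\ds-m)=\inf_{D_{R/2}}(u/\ds-m)$.

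Next I apply Proposition \ref{updown}\ref{updown1} to $u$ in $D_R$ with parameter $\eps^{p-1}$ (in place of $\eps$), to bound
\[
\fpl v \;\ge\; \fpl u-\frac{\eps^{p-1}}{R^s}\Big\|\tfrac{u}{\ds}-m\Big\|_{L^\infty(D_R)}^{p-1}-C_{\eps}\,{\rm tail}_{p-1}\!\big((m-\tfrac{u}{\ds})_+,2R\big)^{p-1}-C_3\,m^{p-2}\,{\rm tail}_1\!\big((m-\tfrac{u}{\ds})_+,2R\big).
\]
Combined with $\fpl u\ge -\tilde K$, this shows that $v$ satisfies the \emph{global} condition \eqref{super} with
\[
K=\tilde K+\frac{\eps^{p-1}}{R^s}\Big\|\tfrac{u}{\ds}-m\Big\|_{L^\infty(D_R)}^{p-1}+C_\eps\,{\rm tail}_{p-1}\!\big((m-\tfrac{u}{\ds})_+,2R\big)^{p-1},\qquad H=C_3\,{\rm tail}_1\!\big((m-\tfrac{u}{\ds})_+,2R\big).
\]

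Now I split according to the size of the excess. If $\Ex(u,m,R)\ge m\theta_1$, with $\theta_1$ given by Lemma \ref{lowerbig}, that lemma applied to $v$ (which satisfies \eqref{super}) yields
\[
\inf_{D_{R/2}}\!\Big(\tfrac{u}{\ds}-m\Big)\ge \sigma_1\Ex(u,m,R)-C_4(KR^s)^{\frac{1}{p-1}}-C_4 HR^s.
\]
If instead $\Ex(u,m,R)\le m\theta_1$, Lemma \ref{lowersmall} (applied with $\theta=\theta_1$) gives
\[
\inf_{D_{R/2}}\!\Big(\tfrac{u}{\ds}-m\Big)\ge \sigma_{\theta_1}\Ex(u,m,R)-C_{\theta_1}(m^{p-1}+K)^{\frac{1}{p-1}}R^{\frac{s}{p-1}}-C_{\theta_1}HR^s.
\]
Setting $\sigma_2=\min(\sigma_1,\sigma_{\theta_1})$ and taking the larger constants, in both cases we obtain a bound of the same form. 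To reach the statement, I invoke the elementary inequality $(a+b+c+d)^{1/(p-1)}\le a^{1/(p-1)}+b^{1/(p-1)}+c^{1/(p-1)}+d^{1/(p-1)}$ (valid for $p\ge 2$ because $1/(p-1)\le 1$), applied to
\[
(m^{p-1}+K)R^s=m^{p-1}R^s+\tilde K R^s+\eps^{p-1}\Big\|\tfrac{u}{\ds}-m\Big\|_{L^\infty(D_R)}^{p-1}+C_\eps\,{\rm tail}_{p-1}(\cdots)^{p-1}R^s,
\]
which produces exactly the four summands appearing in the thesis: $\eps\|u/\ds-m\|_{L^\infty(D_R)}$, plus $\tilde C_\eps$ times each of $m R^{s/(p-1)}$, $\tilde K^{1/(p-1)}R^{s/(p-1)}$, and ${\rm tail}_{p-1}(\cdots)R^{s/(p-1)}$. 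The term $-C_4 HR^s$ (or its $C_{\theta_1}$ analogue) already has the form $-C_7\,{\rm tail}_1(\cdots)R^s$.

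The main obstacle, and the reason for the preparatory Proposition \ref{updown}, is the conversion of the superlinear quantity $\eps^{p-1}\|u/\ds-m\|_{L^\infty(D_R)}^{p-1}/R^s$ arising from the local-to-global truncation into the linear $\eps\|u/\ds-m\|_{L^\infty(D_R)}$ term appearing in the conclusion; the matching works only because the final bound involves $(KR^s)^{1/(p-1)}$, so that the $1/R^s$ factor cancels and the $p-1$ exponent collapses. A secondary delicate point is keeping track of the $m^{p-2}$ factor in front of the ${\rm tail}_1$ contribution: this is exactly the split $K$ versus $m^{p-2}H$ required by Lemmas \ref{lowerbig} and \ref{lowersmall}, and is the reason why, as noted in Remark \ref{rem00}, the two tail terms must be treated separately rather than merged.
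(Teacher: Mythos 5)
Your proposal is correct and follows essentially the same route as the paper: truncate to $v=u\vee m\ds$, invoke Proposition \ref{updown}\ref{updown1} with $\eps^{p-1}$ to pass to the global hypothesis \eqref{super} with the stated $K$ and $H$, then dichotomize on the size of $\Ex$ via Lemmas \ref{lowerbig} and \ref{lowersmall}, and finally unpack $(m^{p-1}+K)^{1/(p-1)}R^{s/(p-1)}$ by subadditivity of $t\mapsto t^{1/(p-1)}$. Your closing remarks about the cancellation of the $1/R^s$ factor and the role of the $m^{p-2}$ weight on $\mathrm{tail}_1$ are exactly the mechanism the paper is relying on.
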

\begin{proof}
We may assume $u/\ds-m\in L^\infty(D_R)$, otherwise there is nothing to prove. We set $v=u\vee m\ds$ and fix $\eps>0$. By \eqref{superloc} and Proposition \ref{updown} \ref{updown1} (with $\eps^{p-1}$ replacing $\eps$) there exists $C_\eps,C_3>0$ with $C_3$ depending on $N$, $p$, $s$, and $C_\eps$ also depending on $\eps$, s.t.\ in $D_R$
\begin{align*}
\fpl v &\ge -\tilde K-\frac{\eps^{p-1}}{R^s}\Big\|\frac{u}{\ds}-m\Big\|_{L^\infty(D_R)}^{p-1}-C_\eps{\rm tail}_{p-1}\Big(\Big(m-\frac{u}{\ds}\Big)_+,2R\Big)^{p-1} \\
&\quad -C_3m^{p-2}{\rm tail}_1\Big(\Big(m-\frac{u}{\ds}\Big)_+,2R\Big) =: -K-m^{p-2}H,
\end{align*}
where we have set
\[K=\tilde K+\frac{\eps^{p-1}}{R^s}\Big\|\frac{u}{\ds}-m\Big\|_{L^\infty(D_R)}^{p-1}+C_\eps{\rm tail}_{p-1}\Big(\Big(m-\frac{u}{\ds}\Big)_+,2R\Big)^{p-1},\]
\[H=C_3{\rm tail}_1\Big(\Big(m-\frac{u}{\ds}\Big)_+,2R\Big).\]
Thus, $v$ satisfies \eqref{super} with $K,H,m\ge 0$ defined as above. By Lemma \ref{lowerbig} we can find constants $0<\sigma_1\le 1\le\theta_1,\,C_4$ (depending on $N$, $p$, $s$) s.t.\
\[\Ex(v, m , R)\ge m\theta_1 \ \Longrightarrow \ \inf_{D_{R/2}}\Big(\frac{v}{\ds}-m\Big) \ge \sigma_1\Ex(v, m , R)-C_4(KR^s)^\frac{1}{p-1}-C_4HR^s.\]
Next, choose $\theta=\theta_1\ge 1$ in Lemma \ref{lowersmall}. Then, there exist constants $0<\sigma_{\theta_1}\le 1\le C_{\theta_1}$  s.t.\
\[\Ex(v, m , R)\le m\theta_1 \ \Longrightarrow \ \inf_{D_{R/2}}\Big(\frac{v}{\ds}-m\Big) \ge \sigma_{\theta_1}\Ex(v, m , R)-C_{\theta_1}(m^{p-1}+K)^\frac{1}{p-1}R^\frac{s}{p-1}-C_{\theta_1}HR^s.\]
Set $\sigma_{2}\in \ ]0, 1[$, $C>1$ defined as
\[\sigma_2=\min\big\{\sigma_1,\,\sigma_{\theta_1}\big\}, \quad C=\max\big\{C_4,\,C_{\theta_1}\big\},\]
hence depending only on $N$, $p$, $s$, and $\Omega$. 
In both cases, since $v=u$ in $D_{2R}\supset \tilde B_{R}$, we have 
\[\inf_{D_{R/2}}\Big(\frac{u}{\ds}-m\Big) \ge \sigma_{2}\Ex(u, m , R)-C(m^{p-1}+K)^\frac{1}{p-1}R^\frac{s}{p-1}-CHR^s.\]
By \eqref{superloc} and the definitions of $K$, $H$, we have
\begin{align*}
\inf_{D_{R/2}}\Big(\frac{u}{\ds}-m\Big) &\ge \sigma_2\Ex(u, m , R)- CK^\frac{1}{p-1}R^\frac{s}{p-1}- CHR^s\\
&\ge \sigma_2\Ex(u, m , R)-C\eps\Big\|\frac{u}{\ds}-m\Big\|_{L^\infty(D_R)} -  C{\rm tail}_1\Big(\Big(m-\frac{u}{\ds}\Big)_+,2R\Big)R^s\\
&\quad -C\Big[m+\tilde K^\frac{1}{p-1}+C_\eps{\rm tail}_{p-1}\Big(\Big(m-\frac{u}{\ds}\Big)_+,2R\Big)\Big]R^\frac{s}{p-1}, 
\end{align*}
which gives the claim (by renaming $\eps$ and the constants involved)
\end{proof}

\section{The upper bound}\label{sec4}

\noindent
This section is devoted to proving an upper bound for the quotient $u/\ds$, where $u$ is a subsolution of a \eqref{dp} type problem, locally bounded from above by a multiple of $\ds$. The upper bound differs substantially from the lower one, as for large values of the corresponding nonlocal excess, the function $u$ will change sign along the boundary, which of course agrees with $u$ being bounded {\em from above} by a positive multiple of $\ds$. The difficulty comes then from the degeneracy of $\fpl$, as $u$ will have vanishing normal $s$-derivative at some boundary point, and any barrier for $u$ forcing such transition will present the same phenomenon and thus require a more delicate construction. 
\vskip2pt
\noindent
Throughout, we will assume $0\in\partial\Omega$, $R\in\ ]0,\rho/4[$ with $\rho$ defined in \eqref{grho}. As in Section \ref{sec3}, we first consider a function $u\in\widetilde{W}^{s,p}(D_R)$ satisfying
\beq\label{sub}
\begin{cases}
\fpl u\le K+M^{p-2}H & \text{in $D_R$} \\
u\le M\ds & \text{in $\R^N$,}
\end{cases}
\eeq
for some $M,K,H\ge 0$. We begin by constructing an explicit barrier:

\begin{lemma}\label{upperbarrier}
{\rm (Barrier/2)} Let $\partial\Omega$ be $C^{1,1}$, $R\in \ ]0, \rho/4[$ and $\bar x\in D_{R/2}$. Then there exist $v\in\w\cap C(\R^N)$ and $C'_4=C'_4(N,p,s,\Omega)>1$ s.t.
\begin{enumroman}
\item\label{upperbarrier1} $\displaystyle |\fpl v|\le\frac{C'_4}{R^s}$ in $D_{2R}$;
\item\label{upperbarrier2} $v(\bar x)=0$;
\item\label{upperbarrier3} $\displaystyle v\ge\frac{\ds}{C'_4}$ in $D_R^c$;
\item\label{upperbarrier4} $|v|\le C'_4R^s$ in $D_{2R}$.
\end{enumroman}
\end{lemma}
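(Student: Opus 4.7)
My plan is a diffeomorphism-based construction, in the spirit of Lemma \ref{lowerbarrier}. I would define
\[
v(x)=\frac{1}{C_*}\,\ds(\Phi(x)),
\]
where $\Phi \in C^{1,1}(\R^N,\R^N)$ is a diffeomorphism equal to the identity outside a ball $B_r(\bar x)\subset B_R$ (with $r\le R/2$, so that $B_r(\bar x)\subset B_R$), and sending $\bar x$ to its boundary projection $\Pi_\Omega(\bar x)\in\partial\Omega$. A concrete prototype is $\Phi(x)=x-{\rm d}_\Omega(\bar x)\,\eta((x-\bar x)/r)\,\nu$, with $\eta\in C^\infty_c(B_1)$, $\eta(0)=1$, and $\nu$ the inner unit normal at $\Pi_\Omega(\bar x)$. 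Since $\Phi$ displaces points in the direction $-\nu$, which is outward with respect to $\partial\Omega$ near $\Pi_\Omega(\bar x)$, it preserves $\Omega^c$, so $v$ vanishes on $\Omega^c$ and $v\in\w$, with continuity inherited from the $s$-H\"older continuity of $\ds$ composed with the $C^{1,1}$ map $\Phi$.

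Once $\Phi$ is in place, the four required properties follow in short order. Property \ref{upperbarrier2}: $v(\bar x)=\ds(\Phi(\bar x))/C_*=0$ since $\Phi(\bar x)\in\partial\Omega$. Property \ref{upperbarrier3}: if $x\in D_R^c$, either $x\in\Omega^c$ (so $v(x)=0={\rm d}_\Omega^s(x)/C_*$) or $x\in B_R^c\cap\Omega$, in which case $x\notin B_r(\bar x)$ (as $r\le R/2$ and $|\bar x|<R/2$), whence $\Phi(x)=x$ and $v(x)=\ds(x)/C_*$. Property \ref{upperbarrier4}: for $x\in D_{2R}$, $|\Phi(x)|\le|x|+{\rm d}_\Omega(\bar x)\le 3R$, so $v(x)\le (3R)^s/C_*\le C R^s$. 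Property \ref{upperbarrier1} follows from the change-of-variables analysis of Lemma \ref{change} applied as in the proof of Lemma \ref{lowerbarrier}: the fractional $p$-Laplacian of $\ds\circ\Phi$ splits into a leading piece comparable to $\fpl\ds$, bounded on $\Omega_{\rho/2}$ by \cite[Theorem 3.6]{IMS1}, plus error terms controlled by $|D\Phi-I|$; this yields $|\fpl v|\le C$ with $C=C(N,p,s,\Omega)$ independent of $R$, and since $R<\rho/4$ one has $C\le C(\rho/4)^s/R^s$, which is property \ref{upperbarrier1} with $C'_4=C(\rho/4)^s$.

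The main technical point will be controlling $|D\Phi-I|$, which scales as ${\rm d}_\Omega(\bar x)/r$ and can be of order $1$ when ${\rm d}_\Omega(\bar x)\simeq R/2$, possibly beyond the tolerance $\delta(N)$ of Lemma \ref{change}. This may be addressed either by a minor refinement of Lemma \ref{change} admitting a larger matrix neighbourhood of the identity, or by a two-regime construction: the diffeomorphism $\Phi$ above when ${\rm d}_\Omega(\bar x)$ is small compared to $R$ (which covers all points $\bar x$ close to $\partial\Omega$), and a different construction when $\bar x$ is deep inside $D_{R/2}$ -- for instance, based on a torsion-type function supported in the ball $B_{{\rm d}_\Omega(\bar x)/2}(\bar x)\subset\Omega$, rescaled and combined with $\ds/C_*$ outside $B_R$ so as to realize all four properties simultaneously.
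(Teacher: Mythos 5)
The paper's construction is entirely different from what you propose: it builds the barrier $v$ as the unique solution of a double obstacle problem $\varphi\le v\le\psi$ in $\w$, where the lower obstacle $\varphi$ is a rescaled torsion function of an annular regularization $E_R$ of $D_{4R}\setminus D_{3R/4}$ (forcing \ref{upperbarrier3}), and the upper obstacle $\psi$ is a downward-shifted, rescaled torsion function of $B_{R/8}$ centered at $\bar x$ (forcing \ref{upperbarrier2} and \ref{upperbarrier4}). The two-sided bound \ref{upperbarrier1} then comes for free from the Lewy--Stampacchia inequality (Lemma \ref{dobstacle}): $0\wedge\fpl\psi\le\fpl v\le 0\vee\fpl\varphi$, together with Lemma \ref{subglobal} applied to both obstacles. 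That construction is uniform over all $\bar x\in D_{R/2}$ and never needs to straighten or dent the boundary.

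Your diffeomorphism-based plan is a genuinely different route, but it has a gap precisely where you flag it, and the fix you sketch does not close it. With $r\le R/2$, the displacement magnitude is ${\rm d}_\Omega(\bar x)$, which can be as large as $R/2$; then $|D\Phi-I|\simeq{\rm d}_\Omega(\bar x)/r$ is of order one and $\Phi$ may fail to be invertible. Taking instead $r\simeq{\rm d}_\Omega(\bar x)$ keeps $|D\Phi-I|$ small but inflates the second-derivative scale to $|D^2\Phi|\simeq 1/r$, and the error analysis mirroring $g^2_{\eps,\lambda}$ in Lemma \ref{lowerbarrier} then returns a bound of order $1/r^s$ rather than $1/R^s$ near $B_r(\bar x)$; since $r$ can be arbitrarily smaller than $R$, this exceeds the required $C'_4/R^s$. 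Taking $r=R/2$ fixed gives a workable bound only in the regime ${\rm d}_\Omega(\bar x)\lesssim R$ with small constant, leaving the ``deep'' case ${\rm d}_\Omega(\bar x)\gtrsim R$ unaddressed. For that case your proposal offers only a one-line suggestion (``torsion-type function rescaled and combined with $\ds/C_*$ outside $B_R$ so as to realize all four properties''), which merely restates the goal: the gluing of the inner torsion profile to $\ds/C_*$ would itself need a superposition-principle estimate and would reintroduce tail terms, and there is no indication of how \ref{upperbarrier1} would then be verified for the glued function. In short, the close-to-boundary regime is a plausible but unproven sketch, and the deep regime is not a construction at all. The paper's obstacle approach sidesteps both difficulties because the Lewy--Stampacchia inequality transfers $\fpl$-bounds from the explicit obstacles (each a genuine torsion function, with bounds from Lemmas \ref{torest}, \ref{hopf}, \ref{subglobal}) to the implicit minimizer, with no need to control a composed distance function or split into cases.
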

\begin{proof}
We will construct the barrier as a solution of a double obstacle problem, and to this end we divide the proof in several steps.
\begin{figure}
\centering
\begin{tikzpicture}[scale=1.3]
\usetikzlibrary{patterns}
\filldraw (0, 0) circle (1pt);
\draw (3.5,3) node{$\Omega$};
\draw (-4,3.5) to [out=-20, in=180] (0, 0) to [out=0, in=200] (3.5,2) to [out=20, in=-100] (5, 3.5);
\draw[clip] (-4,3.5) to [out=-20, in=180] (0, 0) to [out=0, in=200] (3.5,2) to [out=20, in=-100] (5, 3.5);
\filldraw[lightgray] (0, 0) circle (2.6);
\filldraw[white] (0, 0) circle (1.6);
\draw (-4,3.5) to [out=-20, in=180] (0, 0) to [out=0, in=200] (3.5,2) to [out=20, in=-100] (5, 3.5);
\draw[clip] (0, 0) circle (3);
\filldraw[lightgray] (0,0) -- (-1.1,1.03) -- (-3.35, 3.69) -- (3.84, 3.184) -- (1.17,0.94) -- (0,0);
\filldraw[lightgray] (-1.1,1.03) circle (0.5);
\filldraw[lightgray] (1.17,0.94) circle (0.5);
\filldraw[lightgray](1.92,1.592) circle (0.5);
\filldraw[lightgray] (-1.675,1.845) circle (0.5);
\fill[pattern=dots, even odd rule] (0, 0) circle (2.4) (0, 0) circle (1.8);
\draw[very thin] (-1.1,1.03) circle (0.5);
\draw[very thin] (1.17,0.94) circle (0.5);
\draw[very thin] (1.92,1.592) circle (0.5);
\draw[very thin] (-1.675,1.845) circle (0.5);
\filldraw[white] (0, 0) circle (1);
\draw (0, 0) circle (3);
\draw (0, 0) circle (1);
\filldraw (0, 0) circle (1pt);
\draw (0, 2.6) node{$E_{R}$};
\draw[dashed] (0,0) -- node[above right]{$4R$} (-1, 2.83);
\draw[clip] (0, 0) circle (1);
\draw[dashed] (0,0) -- node[right]{$3R/4$} (0,1);
\draw[thick] (-4,3.5) to [out=-20, in=180] (0, 0) to [out=0, in=200] (3.5,2) to [out=20, in=-100] (5, 3.5);
\end{tikzpicture}
\caption{The regularized set $E_{R}$ in gray; in the dotted part we have ${\rm d}_\Omega\le C{\rm d}_{E_{R}}$.}
\label{fig3}
\end{figure}
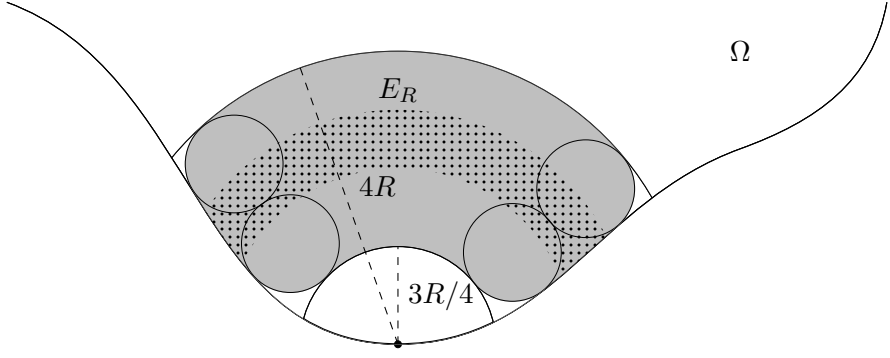

\noindent 
{\bf Step 1 (geometry).} Set
\[E_R=\bigcup\Big\{B_r(y):\,y\in\Omega,\,r\ge\frac{R}{8},\,B_r(y)\subset D_{4R}\setminus D_{3R/4}\Big\}.\]
By the regularity of $\partial\Omega$ stated in \eqref{grho} and $R<\rho/4$, $E_R\subset\Omega$ is a bounded domain with the interior sphere property with radius $\rho_{E_R}\ge R/16$ (see figure \ref{fig3}). We claim that
\beq\label{ubar3}
{\rm d}_\Omega\le C{\rm d}_{E_R} \quad \text{in $D_{3R}\setminus D_R$.}
\eeq
Indeed, fix a point $x\in D_{3R}\setminus D_R$. Since ${\rm d}_{E_R}(x)\ge R/8$ and
\[{\rm d}_{D_{7R/8}^c}(x)\le 3R+\frac{R}{4}\le 26\, {\rm d}_{E_R}(x),\]
we have ${\rm d}_{D_{7R/8}^c}(x)\le C{\rm d}_{E_R}(x)$.
By the triangle inequality and $R<\rho/4$ we have
\begin{align*}
{\rm d}_\Omega(x) &= |x-\Pi_\Omega(x)| \le |x-\Pi_\Omega(\Pi_{D_{7R/8}^c}(x))| \\
&\le |x-\Pi_{D_{7R/8}^c}(x)|+|\Pi_{D_{7R/8}^c}(x)-\Pi_\Omega(\Pi_{D_{7R/8}^c}(x))|.
\end{align*}
We distinguish two cases:
\begin{itemize}[leftmargin=0.6cm]
\item[$(a)$] if $\Pi_{D_{7R/8}^c}(x)\in\partial\Omega$, then
\[{\rm d}_\Omega(x) \le{\rm d}_{D_{7R/8}^c}(x)\le C{\rm d}_{E_R}(x);\]
\item[$(b)$] if $\Pi_{D_{7R/8}^c}(x)\notin\partial\Omega$, then $|\Pi_{D_{7R/8}^c}(x)|=7R/8$, which in turn implies $|\Pi_\Omega(\Pi_{D_{7R/8}^c}(x))|\le R$ and so
\[{\rm d}_\Omega(x)\le{\rm d}_{D_{7R/8}^c}(x)+R+\frac{7R}{8}\le C{\rm d}_{E_R}(x)+\frac{15R}{8}\le C{\rm d}_{E_R}(x).\]
\end{itemize}
In both cases we get \eqref{ubar3}. 
\vskip2pt
\noindent
{\bf Step 2 (lower obstacle).} Let $\tilde\varphi\in W^{s,p}_0(E_R)$ be the solution of the torsion problem
\[\begin{cases}
\fpl\tilde\varphi=1 & \text{in $E_R$} \\
\tilde\varphi=0 & \text{in $E_R^c$.}
\end{cases}\]
By Lemma \ref{subglobal} we have $\fpl\tilde\varphi\le 1$ in $\R^N$, while Lemma \ref{hopf} and the estimate on $\rho_{E_R}$ imply
\[\tilde\varphi\ge\frac{R^\frac{s}{p-2}}{C}{\rm d}_{E_R}^s \quad \text{in $\R^N$,}\]
with some $C>0$ depending on $N$, $p$, $s$. As in Section \ref{sec2}, we denote by $u_{4R}\in W^{s,p}_0(B_{4R})$ the solution to the torsion equation \eqref{torball} in $B_{4R}$. So, since $E_R\subset B_{4R}$, we have
\[\begin{cases}
\fpl\tilde\varphi\le\fpl u_{4R} & \text{in $B_{4R}$} \\
\tilde\varphi\le u_{4R} & \text{in $B_{4R}^c$.}
\end{cases}\]
By Proposition \ref{comp} and Lemma \ref{torest} we have
\[\tilde\varphi\le u_{4R}\le CR^\frac{s}{p-1}{\rm d}_{B_{4R}}^s\le CR^{p's} \quad \text{in $\R^N$.}\]
We set $\varphi=R^{-\frac{s}{p-1}}\tilde\varphi\in W^{s,p}_0(E_R)$, so by \cite[Lemma 2.9 $(i)$]{IMS1} and the inequalities above we have
\beq\label{ubar1}
\fpl\varphi=\frac{\fpl\tilde\varphi}{R^s}\le\frac{1}{R^s} \quad \text{in $\R^N$,}
\eeq
as well as
\beq\label{ubar2}
\varphi\le CR^{\big(p'-\frac{1}{p-1}\big)s}=CR^s \quad \text{in $\R^N$.}
\eeq
 Now, by \eqref{ubar3} and Lemma \ref{hopf} we have
\beq\label{ubar4}
\varphi \ge \frac{{\rm d}_{E_R}^s}{C} \ge \frac{\ds}{C} \quad \text{in $D_{3R}\setminus D_R$.}
\eeq
\vskip2pt
\noindent
{\bf Step 3 (upper obstacle).} Pick $\lambda>0$ (to be determined later) and set for all $x\in\R^N$
\[\psi(x)=\frac{\lambda}{R^\frac{s}{p-1}}\big(\max_{\R^N}u_{R/8}-u_{R/8}(x-\bar x)\big),\]
where $u_{R/8}\in W^{s,p}_0(B_{R/8})$ solves \eqref{torball} in $B_{R/8}$. Clearly $\psi\in \widetilde{W}^{s,p}(\Omega)$, $\psi\ge 0$ and $\psi(\bar x)=0$ (since $u_{R/8}$ is radially decreasing in $B_{R/8}$). We claim that for all $\lambda(N, p, s, \Omega)>0$ big enough 
\beq\label{ubar6}
\psi\ge \varphi \quad \text{in $\R^{N}$.}
\eeq
Indeed, fix $x\in\R^N$. Two cases may occur:
\begin{itemize}[leftmargin=0.6cm]
\item[$(a)$] if $x\in D_{3R/4}$, then $\varphi(x)=0$, while $\psi(x)\ge 0$;
\item[$(b)$] if $x\in  D^{c}_{3R/4}$, then $|x-\bar x|>R/8$, hence $u_{R/8}(x-\bar x)=0$, while by Lemma \ref{torball}
\[\max_{\R^N}u_{R/8} \ge \frac{R^\frac{s}{p-1}}{C}\max_{\R^N}{\rm d}_{B_{R/8}}^s\ge\frac{R^{p's}}{C},\]
which in turn implies $\psi(x)\ge\lambda R^s/C$.  By using \eqref{ubar2}, we have $\varphi(x)\le \psi(x)$ for large enough $\lambda$.
\end{itemize}
In both cases we have \eqref{ubar6} for some  $\lambda(N, p, s, \Omega)>0$ which will be fixed henceforth. By \cite[Lemma 2.9 $(i)$]{IMS1} and Lemma \ref{subglobal} we have
\beq\label{ubar7}
\fpl\psi\ge -\frac{C}{R^s} \quad \text{in $\R^{N}$.}
\eeq
One last property of $\psi$ is that
\beq\label{ubar8}
\psi\le CR^s \quad \text{in $\R^{N}$,}
\eeq
which follows from the upper bound in Lemma \ref{torball} and 
\[\psi\le\frac{C}{R^\frac{s}{p-1}}\max_{\R^{N}} u_{R/8}\le C\max_{\R^{N}}{\rm d}_{B_{R/8}}^s \le CR^s.\]
\vskip2pt
\noindent
{\bf Step 4 (the barrier).} Consider the constrained minimization problem
\beq\label{ubar9}
\min\big\{[u]_{s,p}^p:\,u\in\w, \quad \varphi\le u\le\psi\,\text{ in $\R^N$}\big\}.
\eeq
By Lemma \ref{dobstacle}, problem \eqref{ubar9} has a solution $\tilde v\in\w$, which satisfies
\[0\wedge\fpl\psi\le\fpl\tilde v\le 0\vee\fpl\varphi \quad \text{in $\Omega$}.\]
By \eqref{ubar1}, \eqref{ubar7} we have
\beq\label{ubar10}
|\fpl\tilde v|\le\frac{C}{R^s} \quad \text{in $D_{2R}$.}
\eeq
Besides, since $\varphi(\bar x)=\psi(\bar x)=0$ we deduce $\tilde v(\bar x)=0$, while \eqref{ubar8} implies
\beq\label{ubar11}
0\le\tilde v\le CR^s \quad \text{in $\R^{N}$.}
\eeq
Moreover, by \eqref{ubar4} we have
\beq\label{ubar12}
\tilde v\ge\frac{\ds}{\tilde C} \quad \text{in $D_{3R}\setminus D_R$,}
\eeq
for some $\tilde C=\tilde C(N,p,s,\Omega)>0$. Still, $\tilde v$ is not the desired function as it only satisfies the lower bound \eqref{ubar12} in $D_{3R}\setminus D_R$. So we need to extend \eqref{ubar12} to the larger set $D_R^c$ while keeping the other properties. Set for all $x\in\R^N$
\[v(x)=\begin{cases}
\tilde v(x) & \text{if $x\in D_{3R}$} \\
\displaystyle\tilde v(x)\vee\frac{\ds(x)}{\tilde C} & \text{if $x\in D_{3R}^c$.}
\end{cases}\]
Clearly $v\in\w$ satisfies \ref{upperbarrier2} and \ref{upperbarrier4},  since, by \eqref{ubar12}, we are changing $\tilde v$ only outside of $D_{3R}$. Moreover, \ref{upperbarrier3} now holds by construction. So, it remains to check \ref{upperbarrier1} for $v$.
By Proposition \ref{spp} we have for all $x\in D_{2R}$
\beq\label{ubar13}
\fpl v(x)=\fpl\tilde v(x)+2\int_{D_{3R}^c\cap\{\tilde v<\ds/\tilde C\}}\frac{(\tilde v(x)-\ds(y)/\tilde C)^{p-1}-(\tilde v(x)-\tilde v(y))^{p-1}}{|x-y|^{N+ps}}\,dy.
\eeq
By the monotonicity of $t\mapsto t^{p-1}$ the integrand is negative and \eqref{ubar10} yelds
\[\fpl v\le\frac{C'_4}{R^s}\quad \text{in $D_{2R}$}.\]
On the other hand, for all $x\in D_{2R}$, $y\in D_{3R}^c$ we have  by \eqref{ubar11}
\[\Big|\tilde v(x)-\frac{\ds(y)}{\tilde C}\Big| \le C (R^{s}+|y|^{s}) \le C|y|^{s}\]
and
\[|\tilde v(x)-\tilde v(y)| \le CR^{s} \le C|y|^{s}.\]
Since $|x-y|\ge |y|/3$ for all $x\in D_{2R}$, $y\in D_{3R}^c$,  plugging these inequalities into \eqref{ubar13} gives
\[\fpl v \ge -\frac{C'_{4}}{R^s}-C\int_{D_{3R}^c}\frac{dy}{|y|^{N+s}}\,dy \ge -\frac{C'_4}{R^s}\quad \text{in $D_{2R}$},\]
for a possibly larger $C'_{4}>1$ (depending on $N$, $p$, $s$, and $\Omega$), which concludes the proof of \ref{upperbarrier1}.
\end{proof}

\noindent
The next result shows that, if a subsolution of \eqref{sub} is small enough in $\tilde B_R$, then it is actually negative in $D_{R/2}$:

\begin{lemma}\label{negative}
Let $\partial\Omega$ be $C^{1,1}$, $R\in \ ]0, \rho/4[$, $p\ge 2$ and $u\in\widetilde{W}^{s,p}(D_R)$ satisfy \eqref{sub}. Then there exists $C'_5=C'_5(N,p,s,\Omega)>1$ s.t.\
\[\Ex(u, M, R)\ge C'_5\big(M+(KR^s)^\frac{1}{p-1}+HR^s\big) \ \Longrightarrow \ \sup_{D_{R/2}}\,u\le 0.\]
\end{lemma}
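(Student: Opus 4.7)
The plan is to show that for each $\bar x\in D_{R/2}$ we have $u(\bar x)\le 0$, by a comparison argument using the barrier $v$ constructed in Lemma \ref{upperbarrier}. Fix $\bar x$, let $v$ be the corresponding barrier (so $v(\bar x)=0$, $v\ge \ds/C'_4$ in $D_R^c$, and $|\fpl v|\le C'_4/R^s$ in $D_{2R}$), set $\lambda=C'_4 M$ (so that $\lambda v\ge M\ds\ge u$ in $D_R^c$ by \eqref{sub}), and define the glued function
\[
w(x) = \begin{cases} \lambda v(x), & x \notin \tilde B_R, \\ u(x), & x \in \tilde B_R. \end{cases}
\]
By Proposition \ref{spp} (applicable since $\tilde B_R$ is separated from $D_R$ by \eqref{bt}) and the homogeneity of $\fpl$, for $x\in D_R$
\[
\fpl w(x) = \lambda^{p-1}\fpl v(x) + 2\int_{\tilde B_R} \frac{(\lambda v(x)-u(y))^{p-1}-(\lambda v(x)-\lambda v(y))^{p-1}}{|x-y|^{N+ps}}\,dy,
\]
while $w\ge u$ in $D_R^c$ by construction (equality on $\tilde B_R$, $\lambda v\ge M\ds\ge u$ elsewhere).

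The core of the proof is a lower bound on the integral. Since $u\le M\ds\le \lambda v$ in $\tilde B_R$, the quantity $b:=\lambda v(y)-u(y)$ is nonnegative there, and inequality \eqref{ab} with $a=\lambda v(x)-\lambda v(y)$ gives
\[
(\lambda v(x)-u(y))^{p-1}-(\lambda v(x)-\lambda v(y))^{p-1}\ge 2^{2-p}(\lambda v(y)-u(y))^{p-1}.
\]
I would then combine $\lambda v(y)-u(y)\ge \ds(y)\bigl(M-u(y)/\ds(y)\bigr)$, the geometric controls $\ds(y)\ge (3R/2)^s$ and $|x-y|\le 3R$ valid for $x\in D_R$ and $y\in \tilde B_R$, and Jensen's inequality for $t\mapsto t^{p-1}$, to obtain
\[
\int_{\tilde B_R} \frac{(\lambda v(y)-u(y))^{p-1}}{|x-y|^{N+ps}}\,dy \ge \frac{c}{R^s}\,\Ex(u,M,R)^{p-1}
\]
for some $c=c(N,p,s)>0$. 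Together with $|\fpl v|\le C'_4/R^s$ and $\lambda=C'_4 M$, this gives
\[
\fpl w\ge \frac{1}{R^s}\bigl(c\,\Ex(u,M,R)^{p-1}-CM^{p-1}\bigr)\qquad \text{in } D_R,
\]
for some $C=C(N,p,s,\Omega)>0$.

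To close, choose $C'_5$ large enough so that the hypothesis forces simultaneously $c\,\Ex^{p-1}\ge 3CM^{p-1}$, $c\,\Ex^{p-1}\ge 3KR^s$, and $c\,\Ex^{p-1}\ge 3M^{p-2}HR^s$; the third inequality uses $\Ex\ge C'_5 M$ (producing a factor $M^{p-2}$ from $\Ex^{p-2}$) together with $\Ex\ge C'_5 HR^s$. Summing these gives $\fpl w\ge K+M^{p-2}H\ge \fpl u$ in $D_R$, and Proposition \ref{comp} then yields $w\ge u$ in $\R^N$. Evaluating at $\bar x$ gives $u(\bar x)\le \lambda v(\bar x)=0$, and arbitrariness of $\bar x$ concludes.

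The genuine obstacle—producing a barrier vanishing at an arbitrary interior point while still controlling $\fpl v$ and dominating $\ds$ on $D_R^c$—has already been handled in Lemma \ref{upperbarrier} via the double-obstacle construction. Given that lemma, the present argument is essentially the dual of the supersolution argument of Lemma \ref{lowerbig}, and the only bookkeeping point is the conversion between the three scales $M$, $(KR^s)^{1/(p-1)}$, and $HR^s$ in the threshold for $\Ex$.
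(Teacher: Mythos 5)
Your proof matches the paper's argument essentially line by line: glue $C'_4Mv$ (with $v$ from Lemma \ref{upperbarrier}) to $u$ on $\tilde B_R$, bound $\fpl w$ from below via Proposition \ref{spp}, inequality \eqref{ab}, the geometric properties \eqref{bt} of $\tilde B_R$, and Jensen/H\"older, then split the hypothesis on $\Ex$ into the three inequalities that force $\fpl w\ge K+M^{p-2}H\ge\fpl u$ and conclude by comparison at $\bar x$. The only difference is notational (introducing $\lambda=C'_4 M$), so this is the same proof.
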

\begin{proof}
Fix $\bar x\in D_{R/2}$, and let $v\in\w$ be the barrier in the previous Lemma. Set
\[w(x)=\begin{cases}
C'_4Mv(x) & \text{if $x\in\tilde B_R^c$} \\
u(x) & \text{if $x\in\tilde B_R$,}
\end{cases}\]
 for all $x\in\R^N$, $C'_4>1$ being as in Lemma \ref{upperbarrier}.
Recall that ${\rm dist}\,(D_R,\tilde B_R)>0$. By Proposition \ref{spp}, \cite[Lemma 2.9 $(i)$]{IMS1}, inequality \eqref{ab}, and Lemma \ref{upperbarrier} \ref{upperbarrier1} \ref{upperbarrier3}, for all $x\in D_R$ we have
\[\begin{split}
 \fpl w(x)&= \fpl (C'_4Mv(x))+2\int_{\tilde B_r}\frac{(C'_4Mv(x)-u(y))^{p-1}-(C'_4Mv(x)-C'_4Mv(y))^{p-1}}{|x-y|^{N+ps}}\,dy \\
&\ge (C'_4M)^{p-1}\fpl v(x)+\frac{1}{C}\int_{\tilde B_R}\frac{(C'_4Mv(y)-u(y))^{p-1}}{|x-y|^{N+ps}} \\
&\ge -\frac{CM^{p-1}}{R^s}+\frac{1}{CR^{ps}}\dashint_{\tilde B_R}(M\ds(y)-u(y))^{p-1}\,dy.
\end{split}\]
By the properties \eqref{bt} of $\tilde B_{R}$, H\"older's inequality (recall that $p\ge 2$), and $u\le M\ds$ in $\tilde B_{R}$
\[\dashint_{\tilde B_R}(M\ds(y)-u(y))^{p-1}\,dy\ge \frac{R^{s(p-1)}}{C}\dashint_{\tilde B_R}\Big(M-\frac{u(y)}{\ds (y)}\Big)^{p-1}\,dy\ge \frac{R^{s(p-1)}}{C}\Ex(u, M, R)^{p-1},\]
so that 
\beq\label{neg2}
\fpl w\ge -\frac{CM^{p-1}}{R^s}+\frac{\Ex(u, M, R)^{p-1}}{C R^{s}} \quad \text{in $D_{R}$},
\eeq
for some $C\ge C'_4$. Now set
\[C'_5=(3C^2)^\frac{1}{p-1}\ge (3C)^\frac{1}{p-1},\]
which only depends on $N$, $p$, $s$, and $\Omega$. Assume
\beq\label{neg3}
\Ex(u, M, R)\ge C'_5\big(M+(KR^s)^\frac{1}{p-1}+HR^s\big).
\eeq
A straightforward computation leads from \eqref{neg3} to the following inequalities
\[\Ex(u, M, R)^{p-1} \ge
\begin{cases}
(C'_5M)^{p-1}\ge 3C^2M^{p-1} \\
(C'_5)^{p-1}KR^s\ge 3CKR^s \\
(C'_5M)^{p-2}\Ex(u, M, R)\ge 3CM^{p-2}HR^s,
\end{cases}\]
and hence to
\[\Ex(u, M, R)^{p-1}\ge C^2M^{p-1}+CKR^s+CM^{p-2}HR^s.\]
So, by \eqref{neg2} we have
\[\fpl w\ge K+M^{p-2}H\ge \fpl u\quad \text{in $D_{R}$}.\]
Besides, we have $ u\le w$ in $D_{R}^{c}$: indeed, if $x\in\tilde B_R$ there is nothing to prove. If $x\in D_R^c\setminus\tilde B_R$, by \eqref{sub} and Lemma \ref{upperbarrier} \ref{upperbarrier3} we have
\[u(x)\le M\ds(x)\le C'_4Mv(x)=w(x).\]
Summarizing, we obtained
\[\begin{cases}
\fpl u\le\fpl w & \text{in $D_R$} \\
u\le w & \text{in $D_R^c$.}
\end{cases}\]
By Proposition \ref{comp} we have $u\le w$ in $\R^N$. In particular, by Lemma \ref{upperbarrier} \ref{upperbarrier2} we get $u(\bar x)\le 0$. By arbitrariness of $\bar x\in D_{R/2}$, the proof is concluded.
\end{proof}

\noindent
Now we can prove our upper bounds on subsolutions. First we prove an upper bound for large values $\mathcal{L}_{p-1}$. 
\begin{lemma}\label{upperbig}
Let $\partial\Omega$ be $C^{1,1}$, $p\ge 2$ and $u\in\widetilde{W}^{s,p}(D_R)$ satisfy \eqref{sub}. Then there exist $\theta'_1=\theta'_1(N,p,s,\Omega)\ge 1$, $\sigma'_1=\sigma'_1(N,p,s,\Omega)\in\ ]0,1]$, and $C'_6=C'_6(N,p,s,\Omega)>1$ s.t.\ for all $R\in \ ]0, \rho/4[$
\[\Ex(u, M, R)\ge M\theta'_1 \ \Longrightarrow \ \inf_{D_{R/4}}\Big(M-\frac{u}{\ds}\Big)\ge \sigma'_1\Ex(u, M, R)-C'_6(KR^s)^\frac{1}{p-1}-C'_6HR^s.\]
\end{lemma}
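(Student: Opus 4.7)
The proof implements the two-step scheme announced in the Introduction: first exploit Lemma \ref{negative} to force $u \le 0$ on $D_{R/2}$, then run the torsion-function argument of Lemma \ref{lowerbig} adapted to the nonnegative truncation $\tilde u := (-u)_+$, which converts the sign information into the quantitative excess-based estimate.

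\textbf{Dichotomy and sign reduction.} I would set $\theta'_1 = 2 C'_5$ with $C'_5$ from Lemma \ref{negative}, fix $\sigma'_1 \in (0,1]$ small and $C'_6 = 2 C'_5/\sigma'_1$ (final values adjusted in the last step). If $\sigma'_1 \Ex(u,M,R) \le C'_6 ((KR^s)^{1/(p-1)} + HR^s)$, the RHS of the thesis is $\le 0$ while $u \le M\ds$ gives $M - u/\ds \ge 0$ on $D_{R/4}$, so the inequality holds trivially. Otherwise, combining this with $\Ex(u,M,R) \ge M\theta'_1$ and our choice of constants produces $\Ex(u,M,R) \ge C'_5(M + (KR^s)^{1/(p-1)} + HR^s)$, and Lemma \ref{negative} yields $u \le 0$ on $D_{R/2}$.

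\textbf{Torsion barrier for $\tilde u$.} The function $\tilde u := (-u)_+$ is $\ge 0$ globally and coincides with $-u$ on $D_{R/2}$. Since $(-u)_+ \ge -u$ on $\tilde B_R$ and $\theta'_1 \ge 2$, $\Ex(\tilde u, 0, R)\ge\Ex(u,M,R) - M \ge \tfrac{1}{2}\Ex(u,M,R)$. I would then reproduce the construction of Lemma \ref{lowerbig} verbatim for $\tilde u$ at scale $R$ with $m=0$: take $A_R$ and its torsion function $v$ ($\fpl v\le 1$ globally, $v \ge R^{s/(p-1)}\ds/C$ on $D_{R/2}$), and set $w = \lambda R^{-s/(p-1)}v$ on $\tilde B_R^c$ and $w = \tilde u$ on $\tilde B_R$. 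The key feature is that the comparison boundary condition $w \le \tilde u$ on $D_R^c$ holds for free, since $w = 0 \le \tilde u$ on $D_R^c \setminus \tilde B_R$. Proposition \ref{spp} and the chain of estimates of Lemma \ref{lowerbig} give $\fpl w \le - c\,\Ex(\tilde u,0,R)^{p-1}/R^s$ in $D_R$ for $\lambda$ proportional to $\Ex(\tilde u,0,R)$.

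\textbf{Lower bound on $\fpl \tilde u$ and conclusion.} To invoke Proposition \ref{comp} I need $\fpl \tilde u \ge \fpl w$ in $D_R$, and the decisive estimate is the pointwise bound $\fpl \tilde u \ge -(K + M^{p-2}H) - CM^{p-1}$ on $D_R$. This I would establish by splitting the integral defining $\fpl \tilde u(x)$ over $\{u \le 0\}$ and $\{u > 0\}\subseteq D_R\setminus D_{R/2}$ (by the sign step), using the elementary $(a+b)^{p-1} - a^{p-1} \le C(a^{p-2}+b^{p-2})b$ for $a,b\ge 0$ (cf.\ \eqref{ab}), the pointwise bound $u(y)\le M\ds(y)\le CM$ on the interior region $\Omega\setminus D_{R/2}$, and Young's inequality to absorb the $|u(x)|^{p-2}$ factor. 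Then, after enforcing $\Ex(\tilde u,0,R)^{p-1}\ge CR^s(K + M^{p-2}H + M^{p-1})$---met by taking $\theta'_1$, $C'_6/\sigma'_1$ large and invoking Young's inequality as in Lemma \ref{lowersmall} to absorb $M^{(p-2)/(p-1)}(HR^s)^{1/(p-1)}$ against $\sigma'_1\Ex + HR^s$ through $\Ex \ge M\theta'_1$---the comparison principle gives $w \le \tilde u$ on $\R^N$. On $D_{R/4}\subseteq D_{R/2}$, where $\tilde u = -u$ and $v \ge R^{s/(p-1)}\ds/C$, this translates to
\[M - \frac{u}{\ds} = M + \frac{-u}{\ds} \ge M + c\,\Ex(\tilde u,0,R) \ge \sigma'_1 \Ex(u,M,R),\]
modulo the $(KR^s)^{1/(p-1)}$ and $HR^s$ corrections, completing the proof.

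\textbf{Main obstacle.} The principal technical point is the pointwise lower bound on $\fpl \tilde u$ throughout $D_R$: while Lemma \ref{negative} confines $\{u>0\}$ to the annular region $D_R\setminus D_{R/2}$, computing $\fpl \tilde u$ at a point $x\in D_R$ still requires handling the nonlocal interaction between $\{u\le 0\}$ and $\{u > 0\}$, which must be done via \eqref{ab} together with the $L^\infty$-bound $u_+ \le CM$ on the interior tail region, producing the $CM^{p-1}$ correction that is ultimately absorbed via the largeness condition $\Ex(u,M,R) \ge M\theta'_1$.
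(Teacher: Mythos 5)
Your proposal correctly identifies the key ingredients (Lemma \ref{negative} for the sign reduction, a torsion-function barrier, and the comparison principle), and you correctly note that $\theta'_1$ must dominate $C'_5$ so that the non-trivial case produces $u\le 0$ on $D_{R/2}$. However, the strategy you build on top of this — applying the machinery of Lemma \ref{lowerbig} to the truncation $\tilde u=(-u)_+$ — has a genuine gap at the decisive step, the pointwise lower bound on $\fpl\tilde u$ in $D_R$.

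The truncation $\tilde u=(-u)_+$ modifies $-u$ precisely on $\{u>0\}$. Lemma \ref{negative} gives $u\le 0$ only on $D_{R/2}$, so $\{u>0\}\subseteq D_{R/2}^c$; it is \emph{not} contained in the annulus $D_R\setminus D_{R/2}$ as you state, and in any case it intersects the set $D_R$ where you want to apply the comparison principle. This is exactly the configuration Proposition \ref{spp} forbids (it requires ${\rm supp}(u-v)$ to be at positive distance from the domain), and it is why Proposition \ref{updown} insists the truncation threshold be respected in $D_{2R}$ rather than $D_R$: the nonlocal error from truncating inside the working domain is not a tail term. Concretely, at a point $x\in D_R\setminus D_{R/2}$ with $u(x)>0$ one has $\tilde u(x)=0$ and $\fpl\tilde u(x)=-2\int \tilde u(y)^{p-1}|x-y|^{-N-ps}\,dy$, which has no a priori lower bound — $\tilde u$ need not be bounded, and the integrand is singular near $x$ unless one already knows $\tilde u$ vanishes continuously there. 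Even on $D_{R/2}$, the correction term $\fpl\tilde u(x)-\fpl(-u)(x)$ produces a factor $(-u(x))^{p-2}$ which would require a Young absorption into the $L^\infty$ norm of $\tilde u/\ds$, reintroducing a term that your reduction has not budgeted for. Finally, even if you restrict the comparison to $D_{R/2}$, the barrier $w=\lambda R^{-s/(p-1)}v$ with $v$ the torsion function of $A_R$ is strictly positive on $A_R\setminus D_{R/2}$, where you cannot guarantee $\tilde u\ge w$; so the boundary condition for the comparison also fails.

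The paper avoids the truncation entirely. It builds a regularized set $H_R\subset D_{3R/8}$ (strictly inside $D_{R/2}$), takes $\varphi$ the torsion function of $H_R$, and sets the barrier to $v=-\lambda R^{-s/(p-1)}\varphi$ in $D_{R/2}$ and $v=M\ds$ outside $D_{R/2}$; it then replaces $v$ by $u$ on $\tilde B_R$ to obtain $w$. The comparison $u\le w$ is run only on $H_R$. Because $\varphi$ vanishes off $H_R$, on $D_{R/2}\cap H_R^c$ one has $w=0\ge u$ (this is where Lemma \ref{negative} enters), on $D_{R/2}^c\setminus\tilde B_R$ one has $w=M\ds\ge u$ by \eqref{sub}, and on $\tilde B_R$ one has $w=u$; and $\fpl w\ge \fpl u$ in $H_R$ follows from Proposition \ref{spp} and the torsion estimates. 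The localization of the comparison domain to $H_R$ and the explicit two-piece barrier are the two ideas missing from your proposal, and both are needed: they are what let the argument apply directly to $u$ (the known subsolution) rather than to a truncation whose operator you cannot control.
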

\begin{proof}
We set
\[H_R=\bigcup\Big\{B_r(y):\,y\in D_{3R/8},\,r\ge\frac{R}{16},\,B_r(y)\subset D_{3R/8}\Big\}.\]
By \eqref{grho}, $H_R$ satisfies the interior sphere property with radius $\rho_{H_R}\ge R/32$. Moreover,
\beq\label{ub1}
{\rm d}_\Omega\le C{\rm d}_{H_R} \quad \text{in $D_{R/4}$}
\eeq
for some $C>1$, (this is proved exactly as \eqref{lb1}, changing the radii). Let $\varphi\in W^{s,p}_0(H_R)$ solve \beq\label{ub2}
\begin{cases}
\fpl\varphi=1 & \text{in $H_R$} \\
\varphi=0 & \text{in $H_R^c$.}
\end{cases}
\eeq
By Lemma \ref{subglobal} we have $\fpl\varphi\le 1$ in $\R^N$. Besides we have
\beq\label{ub3}
\frac{R^\frac{s}{p-1}}{C}\ds\le\varphi\le CR^{p's} \quad \text{in $D_{R/4}$,}
\eeq
the first inequality coming from Lemma \ref{hopf} and \eqref{ub1}, while the second is proved as in Lemma \ref{upperbarrier} by comparing $\varphi$ to $u_{R/2}$. Now pick $\lambda>0$ (to be  determined later) and set for all $x\in\R^N$
\[v(x)=\begin{cases}
\displaystyle -\frac{\lambda}{R^\frac{s}{p-1}}\varphi(x) & \text{if $x\in D_{R/2}$} \\
M\ds(x) & \text{if $x\in D_{R/2}^c.$}
\end{cases}\]
Clearly $v\in\widetilde{W}^{s,p}(H_R)$ and ${\rm dist}\,(D_{R/2}^c,\,H_R)>0$. So we can apply Proposition \ref{spp}, which along with \cite[Lemma 2.9 $(i)$]{IMS1}, \eqref{ub2}  and some direct calculations  yields for all $x\in H_R\subset D_{R/2}$
\[
\begin{split}
\fpl v(x) &= -\frac{\lambda^{p-1}}{R^s}\fpl\varphi(x)+2\int_{D_{R/2}^c}\frac{(-\lambda R^{-\frac{s}{p-1}}\varphi(x)-M\ds(y))^{p-1}-(-\lambda R^{-\frac{s}{p-1}}\varphi(x))^{p-1}}{|x-y|^{N+ps}}\,dy \\
&\ge -\frac{\lambda^{p-1}}{R^s}-C\int_{D_{R/2}^c}\frac{\lambda^{p-1}R^{-s}\varphi^{p-1}(x)+M^{p-1}{\rm d}_\Omega^{(p-1)s}(y)}{|x-y|^{N+ps}}\,dy. 
\end{split}
\]
Therefore, using $C|y-x|>|y|$ for $x\in H_{R}$ and $y\in B^{c}_{R/2}$, \eqref{ub3} and ${\rm d}_{\Omega}(y)\le |y|$
\beq 
\label{ub4}
\begin{split}
\fpl v(x)&\ge -\frac{\lambda^{p-1}}{R^s}-C(\lambda^{p-1}+M^{p-1})\int_{B_{R/2}^c}\frac{R^{(p-1)s}+|y|^{(p-1)s}}{|y|^{N+ps}}\,dy\\
&\ge -C\,\frac{\lambda^{p-1}+M^{p-1}}{R^s},
\end{split}
\eeq
for $x\in H_{R}$. Further, set for all $x\in \R^{N}$
\[w(x)=\begin{cases}
v(x) & \text{if $x\in\tilde B_R^c$} \\
u(x) & \text{if $x\in\tilde B_R$,}
\end{cases}\]
where $\tilde B_R$ is defined in \eqref{bt}. By Proposition \ref{spp}, $w\in\widetilde{W}^{s,p}(H_R)$ and for all $x\in H_R$
\beq\label{ub5}
\begin{split}
\fpl w(x) &= \fpl v(x)+2\int_{\tilde B_R}\frac{(v(x)-u(y))^{p-1}-(v(x)-M\ds(y))^{p-1}}{|x-y|^{N+ps}}\,dy \\
 &\ge -C\,\frac{\lambda^{p-1}+M^{p-1}}{R^s}+\frac{1}{C}\int_{\tilde B_R}\frac{(M\ds(y)-u(y))^{p-1}}{|x-y|^{N+ps}}\,dy \\
 &\ge -C\,\frac{\lambda^{p-1}+M^{p-1}}{R^s}+\frac{1}{CR^s}\dashint_{\tilde B_R}\Big(M-\frac{u(y)}{\ds(y)}\Big)^{p-1}\,dy \\
&\ge -C\,\frac{\lambda^{p-1}+M^{p-1}}{R^s}+\frac{\Ex(u, M, R)^{p-1}}{CR^s},
\end{split}
\eeq
where we have also used \eqref{ub4}, \eqref{ab} and H\"older's inequality. So far, $C>1$ has been chosen as big as necessary to satisfy all inequalities, depending only on $N$, $p$, $s$, and $\Omega$. Now we can fix the constants in such a way that either the thesis is trivial or $w$ is an upper barrier for $u$.  Set
\begin{align*}
&\lambda=\frac{\Ex(u, M, R)}{(4C^2)^\frac{1}{p-1}}, & &  \theta'_1=\max\big\{2C'_5,\,(4C^2)^\frac{1}{p-1}\big\},\\
&\sigma'_1=\frac{1}{C(4C^2)^\frac{1}{p-1}}, & & C'_6=\sigma'_1\max\Big\{2C'_5,\,(4C)^\frac{1}{p-1},\,\frac{4C}{(\theta'_1)^{p-2}}\Big\},
\end{align*}
where $C'_5>0$ is as in Lemma \ref{negative}. Clearly $C'_6>1$, and all these constants (except $\lambda$) only depend on $N$, $p$, $s$, and $\Omega$. Now we prove the asserted implication. Assume
\beq\label{ub6}
\Ex(u, M, R)\ge M\theta'_1.
\eeq
Then, with the previous choices, \eqref{ub5} implies
\beq
\label{ub5b}
\begin{split}
\fpl w&\ge  \frac{C}{R^{s}}\left[- \frac{\Ex(u, M, R)^{p-1}}{4C^{2}}   -\Big(\frac{\Ex(u, M, R)}{\theta_{1}'}\Big)^{p-1}+\frac{\Ex(u, M, R)^{p-1}}{C^{2}}\right]\\
& \ge\frac{\Ex(u, M, R)^{p-1}}{2CR^s}.
\end{split}
\eeq
We can also assume
\beq\label{ub7}
\sigma'_1\Ex(u, M, R)\ge C'_6(KR^s)^\frac{1}{p-1}+C'_6HR^s,
\eeq
otherwise there is nothing to prove (recall that $u$ satisfies \eqref{sub}). Such relation and \eqref{ub6} imply
\[\Ex(u, M, R)^{p-1} \ge\begin{cases}
\displaystyle\Big(\frac{C'_6}{\sigma'_1}\Big)^{p-1}KR^s \ge 4CKR^s \\[2pt]
\displaystyle(M\theta'_1)^{p-2}\frac{C'_6}{\sigma'_1}HR^s \ge 4CM^{p-2}HR^s,
\end{cases}\]
and in turn
\[\frac{\Ex(u, M, R)^{p-1}}{2CR^s}\ge K+M^{p-2}H.\]
Plugging the last inequality  into \eqref{ub5b}, we get 
\beq\label{ub8}
\fpl w\ge K+M^{p-2}H\ge \fpl u\quad \text{in $H_{R}$}.
\eeq
Let us now consider the pointwise estimates  for $x\in H_{R}^{c}$. Three cases may occur:
\begin{itemize}[leftmargin=0.6cm]
\item[$(a)$] if $x\in\tilde B_R$, then $w(x)=u(x)$;
\item[$(b)$] if $x\in D_{R/2}^c\cap\tilde B_R^c$, then $w(x)=M\ds(x)\ge u(x)$ by assumption;
\item[$(c)$] if $x\in D_{R/2}\cap H_R^c$, by \eqref{ub7}, \eqref{ub6} we also have
\[\Ex(u, M, R)\ge\begin{cases}
\displaystyle\frac{C'_6}{\sigma'_1}(KR^s)^\frac{1}{p-1}+\frac{C'_6}{\sigma'_1}HR^s \ge 2C'_5(KR^s)^\frac{1}{p-1}+2C'_5HR^s \\
M\theta'_1 \ge 2C'_5M,
\end{cases}\]
which summarizes as
\[\Ex(u, M, R)\ge C'_5\big(M+(KR^s)^\frac{1}{p-1}+HR^s\big),\]
and by Lemma \ref{negative} implies $u\le 0$ in $D_{R/2}$, hence $w(x)=0\ge u(x)$.
\end{itemize}
Therefore $w\le u$ in $H_{R}^{c}$, and recalling \eqref{ub8} we therefore have
\[\begin{cases}
\fpl u\le\fpl w & \text{in $H_R$} \\
u\le w & \text{in $H_R^c$.}
\end{cases}\]
By Proposition \ref{comp} we deduce $u\le w$ in $\R^N$. In particular, for all $x\in D_{R/4}$ we have (recalling the definitions of $\varphi$, $v$, $w$, and of $\lambda$)
\[u(x) \le -\frac{\lambda\varphi(x)}{R^\frac{s}{p-1}} \le -\sigma'_1\Ex(u, M, R)\ds(x).\]
So we have
\[\inf_{D_{R/4}}\Big(M-\frac{u}{\ds}\Big) \ge -\sup_{D_{R/4}}\frac{u}{\ds} \ge \sigma'_1\Ex(u, M, R),\]
which readily yields the conclusion.
\end{proof}

\noindent
Now we prove a similar upper bound for the case when $\Ex(u, M, R)$ is small:

\begin{lemma}\label{uppersmall}
Let $\partial\Omega$ be $C^{1,1}$, $p\ge 2$, $u\in\widetilde{W}^{s,p}(D_R)$ solve \eqref{sub} and $R\in \ ]0, \rho/4[$. Then, for all $\theta\ge 1$ there exist $\sigma'_\theta=\sigma'_\theta(N,p,s,\Omega,\theta)\in\ ]0,1]$, $C'_\theta=C'_\theta(N,p,s,\Omega,\theta)>1$ s.t.\
\[\Ex(u, M, R)\le M\theta \ \Longrightarrow \ \inf_{D_{R/2}}\Big(M-\frac{u}{\ds}\Big)\ge\sigma'_\theta\Ex(u, M, R)-C'_\theta(M^{p-1}+K)^\frac{1}{p-1}R^\frac{s}{p-1}-C'_\theta HR^s.\]
\end{lemma}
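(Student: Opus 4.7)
The strategy is to mirror Lemma \ref{lowersmall} with upper and lower barriers interchanged. For $\varphi\in C^\infty_c(B_1)$ with $0\le\varphi\le 1$ and $\varphi\equiv 1$ on $B_{1/2}$, define
\[w_\lambda(x)=M\bigl(1-\lambda\varphi(x/R)\bigr)\ds(x),\qquad \lambda\in(0,\lambda_1],\]
and $v_\lambda=w_\lambda+(u-w_\lambda)\chi_{\tilde B_R}$. Lemma \ref{lowerbarrier} (with $-\lambda$ in place of $\lambda$ and $2R$ in place of $R$) together with the homogeneity of $\fpl$ yields $\fpl w_\lambda\ge -CM^{p-1}(1+\lambda/R^s)$ in $D_R$, while the nonlocal superposition principle (Proposition \ref{spp}) provides, for $x\in D_R$,
\[\fpl v_\lambda(x)=\fpl w_\lambda(x)+2\int_{\tilde B_R}\frac{(w_\lambda(x)-u(y))^{p-1}-(w_\lambda(x)-w_\lambda(y))^{p-1}}{|x-y|^{N+ps}}\,dy.\]
The ordering $v_\lambda\ge u$ on $D_R^c$ holds by construction on $\tilde B_R$ and because $v_\lambda=w_\lambda=M\ds\ge u$ on $B_R^c\setminus\tilde B_R$, where $\varphi(x/R)=0$.
\vskip2pt
\noindent
The crucial step is a lower bound on the integral. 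For $y\in\tilde B_R$, since $\varphi(y/R)=0$, set $\eta:=w_\lambda(y)-w_\lambda(x)\ge cMR^s>0$ and $\delta:=M\ds(y)-u(y)\ge 0$: the integrand equals $|\delta-\eta|^{p-2}(\delta-\eta)+\eta^{p-1}$. Here the argument genuinely departs from Lemma \ref{lowersmall}: there, both $u(y)-w_\lambda(x)$ and $w_\lambda(y)-w_\lambda(x)$ were positive and bounded below, so Lagrange's theorem applied to $t\mapsto|t|^{p-2}t$ delivered a linear-in-$\delta$ lower bound. Here $w_\lambda(x)-u(y)$ may change sign as $u(y)$ crosses $w_\lambda(x)$, and Lagrange fails since the derivative $(p-1)|t|^{p-2}$ vanishes at $t=0$. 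The replacement is the elementary inequality
\[|\delta-\eta|^{p-2}(\delta-\eta)+\eta^{p-1}\ge c_p\,\eta^{p-2}\,\delta\qquad(\delta,\eta\ge 0),\]
valid for $p\ge 2$ via case analysis on $r=\delta/\eta$: Lagrange in the region $|\xi|\ge\eta/2$ when $\delta\le\eta/2$; the bound $(\eta-\delta)^{p-1}\le(\eta/2)^{p-1}$ when $\eta/2<\delta\le\eta$; and the convexity estimate $(\delta-\eta)^{p-1}+\eta^{p-1}\ge 2^{2-p}\delta^{p-1}$ combined with $\delta^{p-2}\ge\eta^{p-2}$ when $\delta>\eta$. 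Inserting this together with $\eta\ge cMR^s$, $\ds(y)\simeq R^s$ and $|x-y|\simeq R$ on $\tilde B_R$, the integral is bounded below by $c\,M^{p-2}R^{-s}\Ex(u,M,R)$, yielding
\[\fpl v_\lambda\ge -CM^{p-1}+M^{p-2}R^{-s}\bigl(-CM\lambda+c\,\Ex(u,M,R)\bigr)\quad \text{in }D_R.\]
\vskip2pt
\noindent
The rest is standard. Choose $\lambda=\sigma'_\theta\Ex(u,M,R)/M$ with $\sigma'_\theta=\sigma'_\theta(N,p,s,\Omega,\theta)$ so small that $\lambda\le\lambda_1$ (ensured by $\Ex\le M\theta$) and the $\lambda$-linear term is absorbed, giving $\fpl v_\lambda\ge -CM^{p-1}+(c/2)M^{p-2}R^{-s}\Ex(u,M,R)$. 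By the familiar dichotomy either the stated inequality is trivially true, or $\sigma'_\theta\Ex>C'_\theta((M^{p-1}+K)^{1/(p-1)}R^{s/(p-1)}+HR^s)$ for a large $C'_\theta=C'_\theta(N,p,s,\Omega,\theta)$; in the non-trivial case routine manipulations exploiting $\Ex\le M\theta$ and $R\le\rho/4$ give $\fpl v_\lambda\ge K+M^{p-2}H\ge\fpl u$ in $D_R$. Proposition \ref{comp} then forces $u\le v_\lambda$ on $\R^N$, and on $D_{R/2}$, where $v_\lambda=w_\lambda=M(1-\lambda)\ds$, we conclude $\inf_{D_{R/2}}(M-u/\ds)\ge M\lambda=\sigma'_\theta\Ex(u,M,R)$. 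The main obstacle is the algebraic inequality displayed above, which encapsulates the asymmetry between upper and lower barrier constructions already flagged in the introduction.
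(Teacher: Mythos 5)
Your proposal follows the same strategy as the paper's proof: perturbed barrier $w_\lambda$, nonlocal superposition via Proposition \ref{spp}, comparison, then an algebraic dichotomy. The paper, however, gives only a sketch, stating that the integral term is handled ``as in the proof of Lemma \ref{lowersmall}'' and writing down the resulting estimate \eqref{us1} directly. You have correctly identified that this shortcut is not literal: in Lemma \ref{lowersmall} the Lagrange argument works because the intermediate point $\xi\in\big(w_\lambda(y)-w_\lambda(x),\,u(y)-w_\lambda(x)\big)$ lies in an interval bounded away from $0$ (both endpoints $\ge cmR^s>0$), so $|\xi|^{p-2}\gtrsim m^{p-2}R^{(p-2)s}$. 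In the subsolution case the relevant interval is $\big(w_\lambda(x)-w_\lambda(y),\,w_\lambda(x)-u(y)\big)$, whose left endpoint is $\le -cMR^s<0$ but whose right endpoint can be of either sign, so $\xi$ may be arbitrarily close to $0$ where the degenerate derivative $(p-1)|\xi|^{p-2}$ vanishes. Your replacement inequality
\[
(\delta-\eta)^{p-1}+\eta^{p-1}\ \ge\ c_p\,\eta^{p-2}\,\delta,\qquad \delta,\eta\ge 0,\ p\ge 2,
\]
is exactly what the paper's sketch implicitly requires, and your three-case proof (Lagrange for $\delta\le\eta/2$; the crude bound $(\eta-\delta)^{p-1}\le(\eta/2)^{p-1}$ for $\eta/2<\delta\le\eta$; inequality \eqref{ab} together with $\delta^{p-2}\ge\eta^{p-2}$ for $\delta>\eta$) is correct, yielding $c_p=\min\{(p-1)2^{2-p},\,1-2^{1-p},\,2^{2-p}\}$. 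One small point: on $D_R^c\setminus\tilde B_R$ you should note that the ordering $w_\lambda\ge u$ also holds on $B_R\cap\Omega^c$ (where $\ds=0$ so $w_\lambda=0\ge u$), not only on $B_R^c$ where $\varphi$ vanishes; this is trivial but worth stating. In short, your proposal is correct, takes the same route as the paper, and supplies a genuinely missing step in the paper's sketch.
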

\begin{proof}
The proof is similar to the one of Lemma \ref{lowersmall} and we only sketch it. Fix $\varphi\in C^\infty_c(B_1)$ s.t.\ $\varphi=1$ in $B_{1/2}$ and $0\le\varphi\le 1$ in $B_1$, let $\lambda_1>0$ be as in Lemma \ref{lowerbarrier}, and for all $\lambda\in\ ]0,\lambda_1]$ set
\[w_\lambda(x)=M\Big(1-\lambda\varphi\Big(\frac{x}{R}\Big)\Big)\ds(x), \qquad  x\in\R^N.\]
Without loss of generality we may assume $\lambda_1\le 1$. Then $w_\lambda\in W^{s,p}(D_R)$ and it satisfies
\[\begin{cases}
\displaystyle\fpl w_\lambda\ge -C_6M^{p-1}\Big(1-\frac{\lambda}{R^s}\Big) & \text{in $D_R$} \\
w_\lambda= M(1-\lambda)\ds & \text{in $D_{R/2}$}
\end{cases}\]
($C_6>0$ as in Lemma \ref{lowerbarrier}). Now set for all $x\in\R^N$
\[v_\lambda(x)=\begin{cases}
w_\lambda(x) & \text{if $x\in\tilde B_R^c$} \\
u(x) & \text{if $x\in\tilde B_R$,}
\end{cases}\]
where $\tilde B_R$ is defined as in \eqref{bt}. By Proposition \ref{spp}, we have for all $x\in D_R$
\[\fpl v_\lambda(x) = \fpl w_\lambda(x)+2\int_{\tilde B_R}\frac{(w_\lambda(x)-u(y))^{p-1}-(w_\lambda(x)-w_\lambda(y))^{p-1}}{|x-y|^{N+ps}}\,dy,\]
and estimating the integral term as in the proof of Lemma \ref{lowersmall}, we obtain
\beq\label{us1}
\fpl v_{\lambda}\ge -CM^{p-1}-\frac{M^{p-2}}{R^s}\Big(CM\lambda-\frac{\Ex(u, M, R)}{C}\Big),
\eeq
for some $C>1$ (depending on $N$, $p$, $s$, and $\Omega$). Now we fix $\theta\ge 1$ and set
\[\sigma'_\theta=\frac{\lambda_1}{2\theta C^2}, \quad C'_\theta=\sigma'_\theta\max\big\{4C,\,(4C^2\theta^{p-2})^\frac{1}{p-1}\big\}, \quad \lambda=\frac{\sigma'_\theta\Ex(u, M, R)}{M}.\]
Note that $\sigma'_\theta\le 1$. We also assume
\beq\label{us2}
\Ex(u, M, R)\le M\theta.
\eeq
Then, by the choice of constants we have
\[\lambda<\lambda_1, \quad CM\lambda\le\frac{\Ex(u, M, R)}{2C}.\]
These inequalities and \eqref{us1} give
\[\fpl v_\lambda\ge -CM^{p-1}+\frac{M^{p-2}}{R^{s}}\frac{\Ex(u, M, R)}{2C}\quad \text{in $D_{R}$}.\] 
Assuming also  
\[\sigma'_\theta\Ex(u, M, R)\ge C'_\theta(M^{p-1}+K)^\frac{1}{p-1}R^\frac{s}{p-1}+C'_\theta HR^s\]
(otherwise the thesis is trivial), the choice of the parameters and \eqref{us2} imply
\[M^{p-2}\Ex(u,M,R)\ge 2C(CM^{p-1}+K+M^{p-2}H)R^s,\]
exactly as in the proof of Lemma \ref{lowersmall}, and therefore
\[\fpl v_{\lambda}\ge K+M^{p-2}H \quad \text{in $D_{R}$.}\]
Moreover in $D_{R}^{c}$ we have by construction either $v_{\lambda}=u$ in $\tilde{B}_{R}$, or $v_{\lambda}=w_{\lambda}=M\ds\ge u$. Thus 
\[\begin{cases}
\fpl u\le\fpl v_\lambda & \text{in $D_R$} \\
u\le v_\lambda & \text{in $D_R^c$.}
\end{cases}\]
Proposition \ref{comp} ensures  $u\le v_\lambda$ in all of $\R^N$. In particular $u\le w_\lambda=M(1-\lambda)\ds$ in $D_{R/2}$. So, 
\[\inf_{D_{R/2}}\Big(M-\frac{u}{\ds}\Big) \ge \inf_{D_{R/2}}\Big(M-\frac{w_\lambda}{\ds}\Big) \ge M\lambda = \sigma'_\theta\Ex(u, M, R)\]
and the conclusion follows.
\end{proof}

\noindent
Now we present the analog of Proposition \ref{lower}, dealing with the problem
\beq\label{subloc}
\begin{cases}
\fpl u\le \tilde K & \text{in $D_R$} \\
u\le M\ds & \text{in $D_{2R}$,}
\end{cases}
\eeq
with $\tilde K,M\ge 0$.

\begin{proposition}\label{upper}
{\rm (Upper bound)} Let $\partial\Omega$ be $C^{1,1}$, $p\ge 2$, $u\in\widetilde{W}^{s,p}_0(D_R)$ solve \eqref{subloc} and $R\in \ ]0, \rho/4[$. Then, for all $\eps>0$ there exist $\tilde C'_\eps=\tilde C'_\eps(N,p,s,\Omega,\eps)>0$ and two more constants $\sigma'_2=\sigma'_2(N,p,s,\Omega)\in(0,1]$, $C'_7=C'_7(N,p,s,\Omega)>1$ s.t.\ 
\begin{align*}
\inf_{D_{R/4}}\Big(M-\frac{u}{\ds}\Big) &\ge \sigma'_2\Ex(u, M, R)-\eps\Big\|M-\frac{u}{\ds}\Big\|_{L^\infty(D_R)}  - C'_7{\rm tail}_1\Big(\Big(\frac{u}{\ds}-M\Big)_+,2R\Big)R^s\\
&\quad -\tilde C'_\eps\Big[M+\tilde K^\frac{1}{p-1}+{\rm tail}_{p-1}\Big(\Big(\frac{u}{\ds}-M\Big)_+,2R\Big)\Big]R^\frac{s}{p-1}.
\end{align*}
\end{proposition}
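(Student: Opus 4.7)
The plan is to mirror the proof of Proposition \ref{lower} with all inequalities reversed: instead of the truncation from below $u\vee m\ds$, I would work with the truncation from above $v=u\wedge M\ds$, and combine the upper-bound counterparts Lemma \ref{upperbig} (large excess regime) and Lemma \ref{uppersmall} (small excess regime).

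First, I would assume $M-u/\ds\in L^\infty(D_R)$ (otherwise the claim is trivial) and set $v=u\wedge M\ds$, which lies in $\widetilde W^{s,p}_0(D_R)$ since $u$ and $M\ds$ both vanish in $\Omega^c$. Applying Proposition \ref{updown}\,\ref{updown2} with $\eps^{p-1}$ in place of $\eps$ yields constants $C'_\eps,C'_3>0$ (with $C'_3$ independent of $\eps$) such that in $D_R$
\[
\fpl v\le K+M^{p-2}H,
\]
where
\begin{align*}
K &:=\tilde K+\frac{\eps^{p-1}}{R^s}\Big\|M-\frac{u}{\ds}\Big\|_{L^\infty(D_R)}^{p-1}+C'_\eps\,{\rm tail}_{p-1}\Big(\Big(\frac{u}{\ds}-M\Big)_+,2R\Big)^{p-1},\\
H &:=C'_3\,{\rm tail}_1\Big(\Big(\frac{u}{\ds}-M\Big)_+,2R\Big).
\end{align*}
Thus $v$ satisfies \eqref{sub} with these $K,H,M\ge 0$.

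Next, I would apply Lemma \ref{upperbig} (giving constants $\theta'_1,\sigma'_1,C'_6$) and Lemma \ref{uppersmall} with $\theta=\theta'_1$ (giving constants $\sigma'_{\theta'_1},C'_{\theta'_1}$) to $v$. The dichotomy $\Ex(v,M,R)\gtrless M\theta'_1$ then produces, upon setting $\sigma'_2:=\min\{\sigma'_1,\sigma'_{\theta'_1}\}$ and $C:=\max\{C'_6,C'_{\theta'_1}\}$ and using $D_{R/4}\subset D_{R/2}$, a single estimate
\[
\inf_{D_{R/4}}\Big(M-\frac{v}{\ds}\Big)\ge\sigma'_2\Ex(v,M,R)-C(M^{p-1}+K)^{\frac{1}{p-1}}R^{\frac{s}{p-1}}-CHR^s.
\]
Since $u\le M\ds$ in $D_{2R}$ by \eqref{subloc}, we have $v=u$ throughout $D_{2R}\supset D_{R/4}\cup\tilde B_R$, so $v/\ds=u/\ds$ in $D_{R/4}$ and $\Ex(v,M,R)=\Ex(u,M,R)$.

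Finally, I would substitute the explicit $K,H$ and use subadditivity of $t\mapsto t^{1/(p-1)}$ (valid as $p-1\ge 1$): the contribution $\bigl(\eps^{p-1}R^{-s}\|M-u/\ds\|_{L^\infty(D_R)}^{p-1}\bigr)^{1/(p-1)}R^{s/(p-1)}$ collapses precisely to $\eps\|M-u/\ds\|_{L^\infty(D_R)}$, the ${\rm tail}_{p-1}^{p-1}$ term loses its outer power, $\tilde K$ produces $\tilde K^{1/(p-1)}R^{s/(p-1)}$, and the $M^{p-1}$ contribution becomes $MR^{s/(p-1)}$; meanwhile $HR^s$ keeps its $R^s$ weight. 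Renaming $\eps$ and absorbing extra numerical factors into $\sigma'_2,C'_7,\tilde C'_\eps$ yields the claim. The only subtlety—anticipated in Remark \ref{rem00}—is the appearance of two tail exponents with different $R$-weights ($R^s$ against $R^{s/(p-1)}$) and the residual $M^{p-2}$ factor in front of ${\rm tail}_1$; this bookkeeping is, however, already taken care of by the form of Lemmas \ref{upperbig}--\ref{uppersmall}, so the final step is routine.
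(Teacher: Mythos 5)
Your proof is correct and follows essentially the same route as the paper's: truncate with $v=u\wedge M\ds$, invoke Proposition \ref{updown}\,\ref{updown2} to obtain the global hypothesis \eqref{sub} for $v$ with the appropriate $K,H$, split according to the size of $\Ex(v,M,R)$ via Lemmas \ref{upperbig} and \ref{uppersmall}, and use $v=u$ on $D_{2R}$ to transfer the conclusion to $u$. The paper merely sketches these steps by reference to Proposition \ref{lower}; your write-up fills in the same bookkeeping (subadditivity of $t\mapsto t^{1/(p-1)}$, the restriction to $D_{R/4}$) without any deviation in strategy.
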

\begin{proof}
The proof is identical to the one of Proposition \ref{lower}, so we only sketch it. Consider $v=u\wedge M\ds$ and fix $\eps>0$. By Proposition \ref{updown} \ref{updown2} 
\[\begin{cases}
\fpl v \le K+M^{p-2}H \quad \text{in $D_{R}$} \\
v\le M\ds \quad\text{in $\R^{N}$},
\end{cases}\]
where 
\[K:=\tilde K+\frac{\eps^{p-1}}{R^s}\Big\|M-\frac{u}{\ds}\Big\|_{L^\infty(D_R)}^{p-1}+C'_\eps{\rm tail}_{p-1}\Big(\Big(\frac{u}{\ds}-M\Big)_+,2R\Big)^{p-1},\]
\[H:=C'_3{\rm tail}_1\Big(\Big(\frac{u}{\ds}-M\Big)_+,2R\Big).\]
Let $0<\sigma'_1\le 1\le\theta'_1,C'_6$ given in  Lemma \ref{upperbig} and choose $\theta=\theta_{1}'$ in Lemma \ref{uppersmall}, with corresponding $0<\sigma'_{\theta'_1}\le 1\le C'_{\theta'_1}$ given therein. Define
\[\sigma'_2=\min\{\sigma'_1,\sigma'_{\theta'_1}\}, \quad C=\max\{C'_6,C'_{\theta'_1}\}.\]
Considering separately the cases $\Ex(u, M, R)\ge M\theta_{1}$ and $\Ex(u, M, R)<M\theta_{1}$   we obtain that 
\[\inf_{D_{R/4}}\Big(M-\frac{v}{\ds}\Big) \ge \sigma'_2\Ex(v, M, R)-C(M^{p-1}+K)^\frac{1}{p-1}R^\frac{s}{p-1}-CHR^s.\]
Since $u=v$ in $D_{2R}$, after standard estimates we conclude.
\end{proof}

\section{Weighted H\"older regularity}\label{sec5}

\noindent
This final section is devoted to the proof of Theorem \ref{main}, i.e., of weighted H\"older regularity for the solutions of problem \eqref{dp}. We follow a standard approach, starting with an estimate of the oscillation near the boundary of $u/\ds$, where $u$ satisfy
\beq\label{bound}
\begin{cases}
|\fpl u|\le K & \text{in $\Omega$} \\
u=0 & \text{in $\Omega^c$,}
\end{cases}
\eeq
with some $K>0$. Our estimate reads as follows:

\begin{theorem}\label{osc}
Let $\partial\Omega$ be $C^{1,1}$, $p\ge 2$, $x_1\in\partial\Omega$ and $u\in\w$ solve \eqref{bound}. Then there exist $\alpha_1\in\ ]0,s]$, $C_8>1$, $R_0=R_0\in\ ]0,\rho/4[$ all depending on $N, p, s$ and $\Omega$ s.t.\ for all $r\in\ ]0,R_0[$
\[\underset{D_r(x_1)}{\rm osc}\,\frac{u}{\ds}\le C_8K^\frac{1}{p-1}r^{\alpha_1}.\]
\end{theorem}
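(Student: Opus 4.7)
\noindent
The plan is a standard oscillation-decay iteration driven by Propositions \ref{lower} and \ref{upper}. After translating so that $x_1 = 0$, fix $\eta \in (0, 1/8]$ and $R_0 \in (0, \rho/4)$ (both to be chosen at the end) and set $R_k = \eta^k R_0$. Let
\[M_k = \sup_{D_{2R_k}}\tfrac{u}{\ds},\quad m_k = \inf_{D_{2R_k}}\tfrac{u}{\ds},\quad \omega_k = M_k - m_k.\]
The constraint $\eta \le 1/8$ guarantees $D_{2R_{k+1}}\subset D_{R_k/4}$, which is where the conclusions of both propositions hold simultaneously when applied at scale $R_k$. A global comparison estimate $u \le CK^{1/(p-1)}\ds$ in $\Omega$ (applied to $\pm u$, cf.\ \cite[Theorem 4.4]{IMS1}) yields $|m_k|,|M_k|\le C_0 K^{1/(p-1)}$ for all $k$. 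The aim is to prove inductively that $\omega_k \le A K^{1/(p-1)} R_k^{\alpha_1}$ for some universal $A, \alpha_1 > 0$; the conclusion then follows by taking $k$ with $2R_{k+1} \le r \le 2R_k$, giving $C_8 = A(2\eta)^{-\alpha_1}$.

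\noindent
For the one-step improvement, in the case $m_k \ge 0$ I would apply Proposition \ref{lower} to $u$ with $m = m_k$ and Proposition \ref{upper} to $u$ with $M = M_k$, both at scale $R_k$; if $m_k < 0 \le M_k$, the lower-side step is instead carried out on $-u$ via Proposition \ref{upper} with $M = -m_k \ge 0$, and the case $M_k \le 0$ is symmetric, using that $\fpl$ is odd. In every instance the two excesses are taken on the same ball $\tilde B_{R_k}\subset D_{2R_k}$, where $m_k \le u/\ds \le M_k$ pointwise, and therefore sum to exactly $\omega_k$. Adding the two pointwise improvements on $D_{R_k/4}$, fixing $\eps < \sigma/4$ with $\sigma = \min(\sigma_2,\sigma'_2) > 0$, and absorbing the $\eps\|u/\ds - m_k\|_{L^\infty(D_{R_k})}$-type terms (which are bounded by $\eps\omega_k$) produces
\[\omega_{k+1} \le (1-\sigma/2)\omega_k + \mathcal{E}_k,\]
where $\mathcal{E}_k$ collects the deterministic source $CK^{1/(p-1)} R_k^{s/(p-1)}$ (embedding $|m_k|,|M_k|\le CK^{1/(p-1)}$) together with the tail contributions ${\rm tail}_1(\cdot, 2R_k)\,R_k^s$ and ${\rm tail}_{p-1}(\cdot, 2R_k)\,R_k^{s/(p-1)}$ from both sides.

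\noindent
The main obstacle, as flagged in Remark \ref{rem00}, is controlling $\mathcal{E}_k$ against the mismatched tail scalings $s$ and $s/(p-1)$. I would split $\Omega \cap B_{2R_k}^c$ into the dyadic annuli $A_j = D_{R_j}\setminus D_{R_{j+1}}$ for $j = 0, \ldots, k-1$ plus the far field $\Omega\setminus D_{R_0}$. Monotonicity of the sequences $\{m_j\}$ and $\{M_j\}$ gives $(m_k - u/\ds)_+ \le \omega_j$ and $(u/\ds - M_k)_+ \le \omega_j$ on $A_j$; the super-additivity $(\sum a_j^{p-1})^{1/(p-1)} \le \sum a_j$ (valid since $p \ge 2$) and direct integration ($|x| \sim R_j$ and $|A_j| \sim R_j^N$) collapse both tails to the common bound
\[\mathcal{E}_k \le C\sum_{j=0}^{k-1}\omega_j\,\eta^{(k-j)s/(p-1)} + CK^{1/(p-1)} R_k^{s/(p-1)}.\]
Plugging the inductive hypothesis $\omega_j \le A K^{1/(p-1)} R_j^{\alpha_1}$ with $\alpha_1 < s/(p-1)$, the convolution reduces to $C_\eta\,A K^{1/(p-1)} R_k^{\alpha_1}$ with $C_\eta$ depending only on $\eta^{s/(p-1) - \alpha_1}$. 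Choosing in order $\alpha_1$ small (so $C\,C_\eta < \sigma/8$), then $\eta$ small (so $\eta^{\alpha_1} \ge 1 - \sigma/4$), then $R_0$ small and $A$ large depending on $C_0$ and $R_0^{s/(p-1) - \alpha_1}$, the induction closes for all $k \ge 0$. The resulting $\alpha_1 \in (0, s/(p-1)] \subseteq (0, s]$ is universal, consistent with the statement.
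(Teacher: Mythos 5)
Your overall strategy is the same as the paper's: iterate the lower and upper bounds (Propositions \ref{lower} and \ref{upper}) over shrinking scales, handle the three sign cases, use monotonicity of the level sequences to control the nonlocal tails by a geometric convolution, and close the induction with small exponent. The two genuine variations you introduce --- treating the geometric ratio $\eta$ as a free parameter rather than fixing it at $1/8$, and collapsing the $\mathrm{tail}_{p-1}$ and $\mathrm{tail}_1$ contributions via the super-additivity $\bigl(\sum a_j^{p-1}\bigr)^{1/(p-1)}\le\sum a_j$ --- are both legitimate and somewhat cleaner than the paper's $S_q(\alpha_1)$ bookkeeping.

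There is, however, a gap in the final parameter selection, and it traces back to a loss of sharpness in the tail estimate. On the annulus $A_j$ you only use $(m_k-u/\ds)_+\le\omega_j$, whereas the paper exploits the full monotonicity chain $m_k-m_j\le(m_k-M_k)+(M_j-m_j)=\omega_j-\omega_k$, which, combined with the exact scaling $M_n-m_n=\mu R_n^{\alpha_1}$, produces the factor $(8^{\alpha_1 j}-1)^q$. That factor vanishes as $\alpha_1\to 0^+$ at \emph{fixed} ratio $\eta=1/8$, and that is precisely why the paper can pick $\alpha_1$ first. With your looser bound, the resulting geometric constant is $C_\eta=\sum_{l\ge 1}\eta^{l(s/(p-1)-\alpha_1)}$, which tends to the \emph{positive} number $\eta^{s/(p-1)}/(1-\eta^{s/(p-1)})$ as $\alpha_1\to 0^+$, so $\alpha_1$ small does \emph{not} give $C\,C_\eta<\sigma/8$. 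Worse, your second step is backwards: since $\eta<1$, shrinking $\eta$ makes $\eta^{\alpha_1}=e^{\alpha_1\log\eta}$ \emph{smaller}, not closer to $1$. As written, the induction does not close. The fix is to swap the order: first choose $\eta$ small enough that $C\cdot\eta^{s/(p-1)}/(1-\eta^{s/(p-1)})<\sigma/8$ (this is why $\eta$ must be a free parameter in your version, unlike the paper's fixed $1/8$), \emph{then} choose $\alpha_1>0$ small so that simultaneously $C\,C_\eta<\sigma/4$ (by continuity in $\alpha_1$) and $\eta^{\alpha_1}>1-\sigma/8$, and finally take $R_0$ small and $A$ large. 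Alternatively, keep $\eta=1/8$ but use the sharper $\omega_j-\omega_k$ estimate, reproducing the paper's vanishing factor. Either fix is routine; the rest of your argument is sound.
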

\begin{proof}
First we assume $x_1=0$ and $K=1$ in \eqref{bound}. We set $v=u/\ds\in\widetilde{W}^{s,p}_0(\Omega)$, $R_0=\min\{1,\rho/4\}$, and for all $n\in\N$ we define $R_n=R_0/8^n$, $D_n=D_{R_n}$, and $\tilde B_n=\tilde B_{R_n/2}$ (see \eqref{bt}). We claim that there exist $\alpha_1\in\ ]0,s]$, $\mu\ge 1$, a nondecreasing sequence $\{m_n\}$, and a nonincreasing sequence $\{M_n\}$ in $\R$ (all depending on $N$, $p$, $s$, and $\Omega$) s.t.\ for all $n\in\N$
\beq\label{osc1}
m_n\le\inf_{D_n}v\le\sup_{D_n}v\le M_n, \quad M_n-m_n=\mu R_n^{\alpha_1}.
\eeq
Pick $\alpha_1\in\ ]0,s]$ (to be determined later). We argue by (strong) induction on $n\in\N$. The first step $n=0$ follows from \cite[Theorem 4.4]{IMS1}, which (slightly rephrased) ensures existence of $\tilde C_\Omega>1$ (depending on $N$, $p$, $s$, and $\Omega$) s.t.\
\[|v|\le \tilde C_\Omega \quad \text{in $\Omega$.}\]
So we set $M_0=\tilde C_\Omega$, $m_n=-\tilde C_\Omega$, $\mu=2\tilde C_\Omega/R_0^{\alpha_1}$, and \eqref{osc1} holds. Now let $n\in\N$ and
\[m_0\le\ldots\le m_n<M_n\le\ldots\le M_0\]
be defined and satisfy \eqref{osc1}. We set $R=R_n/2$, so $D_{n+1}=D_{R/4}$ and $\tilde B_R=\tilde B_n$, and aim at applying our lower and upper bounds on $v$, by distinguishing three cases:
\begin{itemize}[leftmargin=0.6cm]
\item[$(a)$] If $0\le m_n<M_n$, then $u$ satisfies both \eqref{superloc} and \eqref{subloc} with $\tilde K=1$ and non-negative multipliers of $\ds$, namely
\[\begin{cases}
-1\le\fpl u\le 1 & \text{in $D_{R_n/2}$} \\
m_n\ds\le u\le M_n\ds & \text{in $D_n$.}
\end{cases}\]
Thus, Propositions \ref{lower} and \ref{upper} apply, yielding constants $0<\sigma\le 1<C_7,C_\eps$ (we take here the smaller of $\sigma$'s and the bigger of $C_7$'s and of $C_\eps$'s, all depending on $N$, $p$, $s$, $\Omega$ with $C_\eps$ also depending on $\eps$) s.t.\ the following bounds hold:
\beq\label{osc2}
\begin{split}
\inf_{D_{n+1}}(v-m_n) &\ge \sigma\dashint_{\tilde B_n}(v-m_n)\,dx-C_\eps\big[m_n+1+{\rm tail}_{p-1}((m_n-v)_+,R_n)\big]R_n^\frac{s}{p-1} \\
&\quad - C_7\Big[\eps\sup_{D_{R_n/2}}(v-m_n)+{\rm tail}_1((m_n-v)_+,R_n)R_n^s\Big],
\end{split}
\eeq
\beq\label{osc3}
\begin{split}
\inf_{D_{n+1}}(M_n-v) &\ge \sigma\dashint_{\tilde B_n}(M_n-v)\,dx-C_\eps\big[M_n+1+{\rm tail}_{p-1}((v-M_n)_+,R_n)\big]R_n^\frac{s}{p-1} \\
&\quad - C_7\Big[\eps\sup_{D_{R_n/2}}(M_n-v)+{\rm tail}_1((v-M_n)_+,R_n)R_n^s\Big].
\end{split}
\eeq
\item[$(b)$] If $m_n<0<M_n$, then we can similarly apply Proposition \ref{upper} to $u$ with upper bound $M_{n}\ds$ and to $-u$ with upper bound $-m_{n}\ds$. After substitution, this provides \eqref{osc3} and  \eqref{osc2} respectively.
\item[$(c)$] If $m_n<M_n\le 0$, then we apply Proposition \ref{lower} to $-u$ with lower bound $-M_{n}\ds$ and Proposition \ref{upper} to $-u$ with upper bound $-m_{n}\ds$, getting again \eqref{osc3} and \eqref{osc2} respectively.
\end{itemize}
All in all, by taking convenient constants and replacing $\eps$ with $\eps/C_7$, we have
\begin{align*}
\sigma(M_n-m_n) &= \sigma\dashint_{\tilde B_n}(M_n-v)\,dx+\sigma\dashint_{\tilde B_n}(v-m_n)\,dx \\
&\le \inf_{D_{n+1}}(M_n-v)+\inf_{D_{n+1}}(v-m_n)+\eps\sup_{D_n}(M_n-v)+\eps\sup_{D_n}(v-m_n) \\
&\quad + C_\eps\big[1+|M_n|+|m_n|+{\rm tail}_{p-1}((v-M_n)_+,R_n)+{\rm tail}_{p-1}((m_n-v)_+,R_n)\big]R_n^\frac{s}{p-1} \\[2pt]
&\quad + C_7\big[{\rm tail}_1((v-M_n)_+,R_n)+{\rm tail}_1((m_n-v)_+,R_n)\big]R_n^s.
\end{align*}
Notice that 
\[\inf_{D_{n+1}}(M_n-v)+\inf_{D_{n+1}}(v-m_n)=(M_{n}-m_{n})-\underset{D_{n+1}}{\rm osc}\,v\]
and by the inductive hypothesis \eqref{osc1},
\[\sup_{D_n}(M_n-v)+\sup_{D_n}(v-m_n)\le 2(M_{n}-m_{n}).\]
Now fix $\eps=\sigma/4$ and, recalling that $|m_n|,|M_n|\le \tilde C_\Omega$,  we get
\begin{align*}
\sigma(M_n-m_n) &\le \Big(1+\frac{\sigma}{2}\Big)(M_n-m_n)-\underset{D_{n+1}}{\rm osc}\,v \\
&\quad+ C\big[1+{\rm tail}_{p-1}((v-M_n)_+,R_n)+{\rm tail}_{p-1}((m_n-v)_+,R_n)\big]R_n^\frac{s}{p-1} \\[3pt]
&\quad + C\big[{\rm tail}_1((v-M_n)_+,R_n)+{\rm tail}_1((m_n-v)_+,R_n)\big]R_n^s,
\end{align*}
for some $C>1$ depending on $N$, $p$, $s$ and $\Omega$. Rearranging and using \eqref{osc1}, we get
\beq\label{osc6}
\begin{split}
\underset{D_{n+1}}{\rm osc}\,v &\le \Big(1-\frac{\sigma}{2}\Big)\mu R_{n}^{\alpha_{1}}+C\big[1+{\rm tail}_{p-1}((v-M_n)_+,R_n)+{\rm tail}_{p-1}((m_n-v)_+,R_n)\big]R_n^\frac{s}{p-1} \\
&\quad+ C\big[{\rm tail}_1((v-M_n)_+,R_n)+{\rm tail}_1((m_n-v)_+,R_n)\big]R_n^s.
\end{split}
\eeq
Now we need to estimate the tail terms. We note that for all $x\in D_i\setminus D_{i+1}$, $i\in\{0,\ldots n-1\}$, by \eqref{osc1} and monotonicity of the sequences $\{m_n\}$, $\{M_n\}$ we have
\[m_n-v(x)\le m_n-m_i \le (m_n-M_n)+(M_i-m_i) \le \mu(R_i^{\alpha_1}-R_n^{\alpha_1}).\]
Using $|m_{n}|, |M_{n}|, \|v\|_{L^{\infty}(\Omega)}\le \tilde C_{\Omega}$, for all $q\ge 1$ we have
\begin{align*}
\int_{\Omega\cap B_n^c}\frac{(m_n-v(x))_+^q}{|x|^{N+s}}\,dx &\le \int_{\Omega\cap B_0^c}\frac{(m_n-v(x))_+^q}{|x|^{N+s}}\,dx+\sum_{i=0}^{n-1}\int_{D_i\setminus D_{i+1}}\frac{(m_n-v(x))_+^q}{|x|^{N+s}}\,dx \\
&\le C+\mu^q\sum_{i=0}^{n-1}\int_{D_i\setminus D_{i+1}}\frac{(R_i^{\alpha_1}-R_n^{\alpha_1})^q}{|x|^{N+s}}\,dx \\
&\le C+C\mu^q\sum_{i=0}^{n-1}\frac{(R_i^{\alpha_1}-R_n^{\alpha_1})^q}{R_i^s} \le C+C\mu^qS_q(\alpha_1)R_n^{q\alpha_1-s},
\end{align*}
where we have set
\[S_q(\alpha_1)=\sum_{j=1}^\infty\frac{(8^{\alpha_1 j}-1)^q}{8^{sj}}.\]
Recalling the definition \eqref{tail} and setting $q=p-1$, we get (by convexity)
\begin{align*}
{\rm tail}_{p-1}((m_n-v)_+,R_n)R_n^\frac{s}{p-1} &\le C\big(1+\mu^{p-1}S_{p-1}(\alpha_1)R_n^{(p-1)\alpha_1-s}\big)^\frac{1}{p-1}R_n^\frac{s}{p-1} \\
&\le CR_n^\frac{s}{p-1}+C\mu S_{p-1}^\frac{1}{p-1}(\alpha_1)R_n^{\alpha_1},
\end{align*}
while for $q=1$ we immediately get
\[{\rm tail}_1((m_n-v)_+,R_n)R_n^s \le CR_n^s+C\mu S_1(\alpha_1)R_n^{\alpha_1}.\]
Similarly we prove the estimates
\[{\rm tail}_{p-1}((v-M_n)_+,R_n)R_n^\frac{s}{p-1}\le CR_n^\frac{s}{p-1}+C\mu S_{p-1}^\frac{1}{p-1}(\alpha_1)R_n^{\alpha_1},\]
\[{\rm tail}_1((v-M_n)_+,R_n)R_n^s\le CR_n^s+C\mu S_1(\alpha_1)R_n^{\alpha_1}.\]
Plugging these estimates into \eqref{osc6}, and recalling that $R_0<1$, we get
\beq\label{osc7}
\begin{split}
\underset{D_{n+1}}{\rm osc}\,v &\le \Big(1-\frac{\sigma}{2}\Big)\mu R_n^{\alpha_1}+C\big(S_1(\alpha_1)+S_{p-1}^\frac{1}{p-1}(\alpha_1)\big)\mu R_n^{\alpha_1}+C\big(R_n^\frac{s}{p-1}+R_n^s\big) \\
&\le \Big(1-\frac{\sigma}{2}+CS_1(\alpha_1)+CS_{p-1}^\frac{1}{p-1}(\alpha_1)\Big)8^{\alpha_1}\mu R_{n+1}^{\alpha_1}+C\big(R_0^{\frac{s}{p-1}-\alpha_1}+R_0^{s-\alpha_1}\big)8^{\alpha_1} R_{n+1}^{\alpha_1}.
\end{split}
\eeq
We claim that for all $q\ge 1$
\beq\label{osc8}
\lim_{\alpha_1\to 0^+}S_q(\alpha_1)=0.
\eeq
Indeed, for all $\alpha_1\in\ ]0,s/q[$ we have
\[S_q(\alpha_1)\le\sum_{j=1}^\infty\frac{1}{8^{(s-\alpha_1 q)j}}<\infty,\]
while clearly $(8^{\alpha_1 j}-1)^q/8^{sj}\to 0$ as $\alpha_1\to 0^+$, for all $j\in\N$, so $S_q(\alpha_1)\to 0$ as well. Applying \eqref{osc8} with $q=1,p-1$ respectively, for all $\alpha_1>0$ small enough we have
\[\Big(1-\frac{\sigma}{2}+CS_1(\alpha_1)+CS_{p-1}^\frac{1}{p-1}(\alpha_1)\Big)8^{\alpha_1}<1-\frac{\sigma}{4},\]
while we may choose $\mu>1$ big enough to have
\[\Big(1-\frac{\sigma}{4}\Big)\mu+C\big(R_0^{\frac{s}{p-1}-\alpha_1}+R_0^{s-\alpha_1}\big)8^{\alpha_1}\le\mu,\]
so from \eqref{osc7} we have
\[\underset{D_{n+1}}{\rm osc}\,v\le\mu R_{n+1}^{\alpha_1}.\]
Thus, we can find $m_{n+1},M_{n+1}\in[m_n,M_n]$ s.t.\
\[m_{n+1}\le\inf_{D_{n+1}}v\le\sup_{D_{n+1}}v\le M_{n+1}, \quad M_{n+1}-m_{n+1}=\mu R_{n+1}^{\alpha_1},\]
hence \eqref{osc1} holds at step $n+1$, which concludes the induction step.  For any $r\in\ ]0,R_0[$ there exists $n\in\N$ s.t.\ $R_{n+1}<r\le R_n$, so we have
\[\underset{D_r}{\rm osc}\,v\le \underset{D_{n+1}}{\rm osc}\,v\le\mu 8^{\alpha_1} r^{\alpha_1}.\]
Setting $C_8=\mu 8^{\alpha_1}$, we have
\[\underset{D_r}{\rm osc}\,\frac{u}{\ds}\le C_8r^{\alpha_1}.\]
Finally, for any $x_1\in\partial\Omega$ and an arbitrary $K>0$ in \eqref{bound}, translation invariance and homogeneity of $\fpl$ yield the conclusion. 
\end{proof}

\noindent
Our final steps require a technical lemma, which is contained in the proof of \cite[Theorem 1.2]{RS}:

\begin{lemma}\label{3con}
Let $\partial\Omega$ be $C^{1,1}$. If  $v\in L^\infty(\Omega)$ satisfies the following conditions:
\begin{enumroman}
\item\label{3con1} $\|v\|_{L^\infty(\Omega)}\le C_9$;
\item\label{3con3} for all $x_1\in\partial\Omega$, $r>0$ we have $\displaystyle\underset{D_r(x_1)}{\rm osc}\,v\le C_9r^{\beta_1}$;
\item\label{3con2} for all $x_0\in\Omega$ with ${\rm d}_\Omega(x_0)=R$, $v\in C^{\beta_2}(B_{R/2}(x_0))$ with $[v]_{C^{\beta_2}(B_{R/2}(x_0))}\le C_9(1+R^{-\mu})$,
\end{enumroman}
for some $C_{9}, \mu>0$ and $\beta_{1}, \beta_{2}\in\ ]0, 1[$, then there exist $\alpha\in\ ]0,1[$, $C_{10}>0$ depending on the parameters and $\Omega$ s.t.\ $v\in C^{\alpha}(\overline\Omega)$ and $[v]_{C^{\alpha}(\overline\Omega)}\le C_{10}$.
\end{lemma}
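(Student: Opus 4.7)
The strategy is the classical interpolation argument of Ros-Oton and Serra \cite{RS}: one estimates the pointwise oscillation $|v(x)-v(y)|$ by combining the boundary oscillation decay \ref{3con3} (sharp when $|x-y|$ is comparable to ${\rm dist}(x,\partial\Omega)$) with the interior H\"older seminorm \ref{3con2} (sharp when $|x-y|$ is small compared to the distance to the boundary), balancing the two bounds through a geometric mean that cancels the boundary blow-up factor $R^{-\mu}$ from \ref{3con2}.

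First I would extend $v$ continuously to $\overline\Omega$. Indeed, \ref{3con3} forces $\mathrm{osc}_{D_r(x_1)}v\to 0$ as $r\to 0^+$ for every $x_1\in\partial\Omega$, so any sequence $\{x_n\}\subset\Omega$ converging to $x_1$ produces a Cauchy sequence $\{v(x_n)\}$; the unique limit defines the extension, which still satisfies $|v(x)-v(x_1)|\le C_9 r^{\beta_1}$ whenever $x\in\overline\Omega\cap\overline B_r(x_1)$. It therefore suffices to produce a H\"older estimate on pairs $x,y\in\Omega$.

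Next, fix such $x,y$ with, after relabeling, $R:={\rm d}_\Omega(x)\ge{\rm d}_\Omega(y)$, and set $r=|x-y|$ and $x_1=\Pi_\Omega(x)$. If $r\ge R/2$ then $\{x,y\}\subset D_{3r}(x_1)$, and \ref{3con3} directly gives $|v(x)-v(y)|\le C_9(3r)^{\beta_1}\le C r^{\beta_1}$. If $r<R/2$, then $\{x,y\}\subset B_{R/2}(x)\subset D_{3R/2}(x_1)$, and I have two competing estimates: from \ref{3con3},
\[|v(x)-v(y)|\le C_9(3R/2)^{\beta_1}\le C\,R^{\beta_1},\]
and from \ref{3con2} applied at $x_0=x$ (reducing to $R\le 1$ since $\Omega$ is bounded, which absorbs the ``$1+$'' into the $R^{-\mu}$),
\[|v(x)-v(y)|\le C_9(1+R^{-\mu})r^{\beta_2}\le C\,R^{-\mu} r^{\beta_2}.\]
Using $\min\{A,B\}\le A^\theta B^{1-\theta}$ with the weight $\theta=\mu/(\beta_1+\mu)$ chosen to make the $R$-exponent $\beta_1\theta-\mu(1-\theta)$ vanish yields exactly
\[|v(x)-v(y)|\le C\, r^\alpha,\qquad \alpha:=\frac{\beta_1\beta_2}{\beta_1+\mu}\in(0,1),\]
independently of $R$.

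Assembling the two regimes gives $[v]_{C^\alpha(\overline\Omega)}\le C_{10}$, and then \ref{3con1} upgrades this to the full norm bound $\|v\|_{C^\alpha(\overline\Omega)}\le C_{10}$ with $C_{10}=C_{10}(C_9,\beta_1,\beta_2,\mu,\Omega)$. I do not anticipate any serious difficulty: the whole argument is elementary bookkeeping once the geometric-mean trick is in place, and the latter is the only conceptual step — it is precisely designed to eliminate the singular factor $R^{-\mu}$ from \ref{3con2}, which explains the mixed form of the final exponent $\alpha$.
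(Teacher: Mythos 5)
Your proof is correct. The paper does not supply its own proof of this lemma, deferring instead to the proof of \cite[Theorem 1.2]{RS}, and the interpolation argument you give — splitting into the regimes $|x-y|\gtrsim {\rm d}_\Omega(x)$ and $|x-y|\lesssim {\rm d}_\Omega(x)$, then balancing the boundary oscillation bound against the weighted interior H\"older seminorm via a geometric mean that cancels the $R^{-\mu}$ singularity — is precisely the standard Ros-Oton--Serra argument being cited.
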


\noindent
Now we can prove our main result.
\vskip4pt
\noindent
{\em Proof of Theorem \ref{main}.} Let $u\in\w$, $f\in L^\infty(\Omega)$ satisfy \eqref{dp}, and set $K=\|f\|_{L^\infty(\Omega)}$, so $u$ satisfies \eqref{bound}. By homogeneity we can assume $K=1$. Let us collect some known facts about $u$. From \cite[Theorem 1.1]{IMS1} we know that there exist $\alpha_2\in\ ]0,s]$, $C>0$ s.t.\ $u\in C^{\alpha_2}(\overline\Omega)$ and
\beq\label{main1}
\|u\|_{C^{\alpha_2}(\overline\Omega)}\le C
\eeq
(in what follows, all constants depend on $N$, $p$, $s$, and $\Omega$), in particular $\|u\|_{L^\infty(\Omega)}\le C$. Besides, from \cite[Corollary 5.5]{IMS1} we know that for all $x_0\in\Omega$ with $R={\rm d}_\Omega(x_0)$
\beq
\label{main2}
\begin{split}
[u]_{C^{\alpha_2}(B_{R/2}(x_0))} &\le \frac{C}{R^{\alpha_2}}\Big[R^{p's}+\|u\|_{L^\infty(\Omega)}+R^{p's}\Big(\int_{B_R^c(x_0)}\frac{|u(y)|^{p-1}}{|x_0-y|^{N+ps}}\,dy\Big)^\frac{1}{p-1}\Big] \\
&\le \frac{C}{R^{\alpha_2}}\Big[R^{p's}+1+R^{p's}\Big(\int_{B_R^c(x_0)}\frac{1}{|x_0-y|^{N+ps}}\,dy\Big)^\frac{1}{p-1}\Big] \le \frac{C}{R^{\alpha_2}},
\end{split}
\eeq
since $R\le{\rm diam}(\Omega)$. Finally, from \cite[p.\ 292]{RS} we know that, with the same choice of $x_0$ and $R$ as above, the following estimate can be obtained by interpolation:
\beq\label{main3}
[{\rm d}_\Omega^{-s}]_{C^{\alpha_2}(B_{R/2}(x_0))}\le \frac{C}{R^{s+\alpha_2}}.
\eeq
Now we set $v=u/\ds$, and aim at applying Lemma \ref{3con} to this function. First, from \cite[Theorem 4.4]{IMS1} we know that $v\in L^\infty(\Omega)$ with
\beq\label{main4}
\|v\|_{L^\infty(\Omega)}\le C.
\eeq
Further, chosen $x_0\in\Omega$, $R={\rm d}_\Omega(x_0)$, we have for all $x,y\in B_{R/2}(x_0)$
\[
\begin{split}
\frac{|v(x)-v(y)|}{|x-y|^{\alpha_2}} &\le \frac{|u(x){\rm d}_\Omega^{-s}(x)-u(y){\rm d}_\Omega^{-s}(x)|}{|x-y|^{\alpha_2}}+\frac{|u(y){\rm d}_\Omega^{-s}(x)-u(y){\rm d}_\Omega^{-s}(y)|}{|x-y|^{\alpha_2}} \\[2pt]
&\le [u]_{C^{\alpha_2}(B_{R/2}(x))}\|{\rm d}_\Omega^{-s}\|_{L^\infty(B_{R/2}(x_0))}+\|u\|_{L^\infty(\Omega)}[{\rm d}_\Omega^{-s}]_{C^{\alpha_2}(B_{R/2}(x_0))} \\[2pt]
&\le \frac{C}{R^{\alpha_2}}\Big(\frac{2}{R}\Big)^s+\frac{C}{R^{s+\alpha_2}} \le \frac{C}{R^{s+\alpha_2}},
\end{split}
\]
for some $C>0$. Here we have used \eqref{main1}, \eqref{main2}, and \eqref{main3}. Finally, let $x_1\in\partial\Omega$ and $r>0$, and $\alpha_1\in(0,s]$, $C_8>0$, and $R_0\in(0,\rho/4]$ be as in Theorem \ref{osc}. We distinguish two cases:
\begin{itemize}[leftmargin=0.6cm]
\item[$(a)$] If $r\in(0,R_0)$, then by Theorem \ref{osc} we have
\[\underset{D_r(x_1)}{\rm osc}v\le C_8r^{\alpha_1}.\]
\item[$(b)$] If $r\ge R_0$, then by \eqref{main4} we have
\[\underset{D_r(x_1)}{\rm osc}v\le 2\|v\|_{L^\infty(\Omega)} \le \frac{C}{R_0^{\alpha_1}}r^{\alpha_1}.\]
\end{itemize}
In both cases, we can find $C>0$ s.t.\
\[\underset{D_r(x_1)}{\rm osc}v\le Cr^{\alpha_1} \quad \text{for all $r>0$}.\]
Then, hypotheses \ref{3con1}, \ref{3con2}, \ref{3con3} of Lemma \ref{3con} hold with $C_9=C$, $\beta_1=\alpha_1$, $\beta_2=\alpha_2$, and $\mu=\alpha_2+s$. Thus, we conclude that $v\in C^\alpha(\overline\Omega)$ and $[v]_{C^\alpha(\overline\Omega)}\le C$, which by \eqref{main4} implies $\|v\|_{C^\alpha(\overline\Omega)}\le C$, for $\alpha\in\ ]0,s]$ and $C>0$ only depending on $N$, $p$, $s$, and $\Omega$.
 \qed
\vskip5pt
\noindent
{\small {\bf Acknowledgement.} All authors are members of GNAMPA (Gruppo Nazionale per l'Analisi Matematica, la Probabilit\`a e le loro Applicazioni) of INdAM (Istituto Nazionale di Alta Matematica 'Francesco Severi'). This work was supported by the GNAMPA research project {\em Regolarit\`a, esistenza e propriet\`a geometriche per le soluzioni di equazioni con operatori frazionari non lineari} (2017). A.\ Iannizzotto is also supported by the research project {\em Integro-differential Equations and nonlocal Problems}, funded by Fondazione di Sardegna (2017). S. Mosconi  is also supported by the grant PdR 2016-2018 - linea di intervento 2: {\em Metodi Variazionali ed Equazioni Differenziali} of the University of Catania.
The authors would also like to thank Giuseppe Mingione for bringing their attention on some results contained in \cite{KMS,KMS2}.}

\medskip


\begin{thebibliography}{99}
\bibitem{BL}
{\sc L.\ Brasco, E.\ Lindgren,}
 Higher Sobolev regularity for the fractional $p$-Laplace equation in the superquadratic case.
{\em Adv. Math.}, {\bf 304} (2017), 300--354.

\bibitem{BLS}
{\sc L.\ Brasco, E.\ Lindgren, A.\ Shikorra},
Higher H\"older regularity for the fractional $p$-Laplacian in the superquadratic case.
ArXiv preprint arXiv:1711.09835v1.

\bibitem{BP}
{\sc L.\ Brasco, E.\ Parini}, The second eigenvalue of the fractional $p$-Laplacian. {\em Adv. Calc. Var.} {\bf 9} (2016), 323--355.

\bibitem{CRS}
{\sc L.\ Caffarelli, J.M.\ Roquejoffre, Y.\ Sire}, 
Variational problems in free boundaries for the fractional Laplacian. 
{\em J. Eur. Math. Soc.} {\bf 12} (2010), 1151--1179.

\bibitem{DQ}
{\sc L. M.\ Del Pezzo, A.\ Quaas,}
A Hopf's lemma and a strong minimum principle for the fractional $p$-Laplacian.
{\em J. Differential Equations} {\bf 263} (2017), 765--778.

\bibitem{DKP}
{\sc A.\ Di Castro, T.\ Kuusi, G.\ Palatucci,}
Local behavior of fractional $p$-minimizers.
{\em Ann. Inst. H. Poincar\'e Anal. Non Lin\'eaire} {\bf 33} (2016), 1279--1299.

\bibitem{DKP1}
{\sc A.\ Di Castro, T.\ Kuusi, G.\ Palatucci,}
Nonlocal Harnack inequalities.
{\em J. Funct. Anal.} {\bf 267} (2014), 1807--1836.

\bibitem{FJ}
{\sc M.M.\ Fall, S.\ Jarohs},
Overdetermined problems with fractional Laplacian. 
{\em ESAIM: Control Optim. Calc. Var.} {\bf 21} (2015),  924--938.

\bibitem{FP}
{\sc G.\ Franzina, G.\ Palatucci},
Fractional $p$-eigenvalues.
{\em Riv. Mat. Univ. Parma} {\bf 5} (2014), 373--386.

\bibitem{GPM}
{\sc J. P.\ Garc\`{\i}a Azorero, I.\ Peral Alonso, J. J.\ Manfredi,}
Sobolev versus H\"older local minimizers and global multiplicity for some quasilinear elliptic equations.
{\em Commun. Contemp. Math.} {\bf 2} (2000), 385--404.

\bibitem{GM}
{\sc N.\ Gigli, S.\ Mosconi,}
The abstract Lewy-Stampacchia inequality and applications.
{\em J. Math. Pures Appl.} {\bf 104} (2015), 258--275.

\bibitem{ILPS}
\textsc{A.\ Iannizzotto, S.\ Liu, K.\ Perera, M.\ Squassina},  
Existence results for fractional $p$-Laplacian problems via Morse theory.
{\em Adv. Calc. Var.} {\bf 9} (2016), 101--125.

\bibitem{IMS}
\textsc{A.\ Iannizzotto, S.\ Mosconi, M.\ Squassina},
$H^s$ versus $C^0$-weighted minimizers.
{\em Nonlinear Differ. Equ. Appl.} {\bf 22} (2015), 477--497.

\bibitem{IMS1}
\textsc{A.\ Iannizzotto, S.\ Mosconi, M.\ Squassina},
Global H\"older regularity for the fractional $p$-Laplacian.
{\em Rev. Mat. Iberoam.} {\bf 32} (2016), 1353--1392.

\bibitem{IMS2}
\textsc{A.\ Iannizzotto, S.\ Mosconi, M.\ Squassina},
A note on global regularity for the weak solutions of fractional $p$-Laplacian equations.
{\em Rend. Lincei Mat. Appl.} {\bf 27} (2016), 15--24.

\bibitem{IS}
{\sc A.\ Iannizzotto, M.\ Squassina,}
Weyl-type laws for fractional $p$-eigenvalue problems.
{\em Asymptot. Anal.} {\bf 88} (2014), 233--245.

\bibitem{J}
{\sc S.\ Jarohs,}
Strong comparison principle for the fractional $p$-Laplacian and applications to starshaped rings.
To appear in {\em Adv. Nonlinear Studies}, DOI:10.1515/ans-2017-6039.

\bibitem{KKL}
{\sc J.\ Korvenp\"a\"a, T.\ Kuusi, E.\ Lindgren},
Equivalence of solutions to fractional $p$-Laplace type equations.
To appear in {\em J. Math. Pures Appl.}, DOI: 10.1016/j.matpur.2017.10.004.

\bibitem{KKP}
{\sc J.\ Korvenp\"a\"a, T.\ Kuusi, G.\ Palatucci},
The obstacle problem for nonlinear integro-differential operators.
{\em Calc. Var. Partial Differential Equations} {\bf 55} (2016), n. 63. 

\bibitem{KMS}
{\sc T.\ Kuusi, G.\ Mingione, Y.\ Sire}, Nonlocal equations with measure data.  {\em Comm. Math. Phys.} {\bf 337} (2015), 1317--1368.

\bibitem{KMS2}
{\sc T.\  Kuusi, G.\  Mingione, Y.\ Sire}, Nonlocal self-improving properties. {\em Anal. PDE} {\bf 8} (2015), 57--114.

\bibitem{L}
{\sc G. M.\ Lieberman,}
Boundary regularity for solutions of degenerate elliptic equations.
{\em Nonlinear Anal.} {\bf 12} (1988), 1203--1219.

\bibitem{L1}
{\sc E.\ Lindgren,}
H\"older estimates for viscosity solutions of equations of fractional $p$-Laplace type.
{\em NoDEA Nonlinear Differential Equations Appl.} {\bf 23} (2016), 23--55.

\bibitem{LL}
{\sc E.\ Lindgren, P.\ Lindqvist,}
Fractional eigenvalues.
{\em Calc. Var. Partial Differential Equations} {\bf 49} (2014), 795--826.

\bibitem{MS}
{\sc S.\ Mosconi, M.\ Squassina},
Nonlocal problems at nearly critical growth.
{\em Nonlinear Anal.} {\bf 136} (2016), 84--101.

\bibitem{MM}
{\sc S.\ Mosconi, S. A.\ Marano},
Asymptotics for optimizers of the fractional Hardy--Sobolev inequality.
To appear in {\em Commun. Contemp. Math.}, DOI: 10.1142/S0219199718500281.

\bibitem{RO}
{\sc X.\ Ros-Oton},
Nonlocal equations in bounded domains: a survey.
{\em Publ. Mat.} {\bf 60} (2016), 3--26.

\bibitem{RS}
{\sc X.\ Ros-Oton, J.\ Serra},
The Dirichlet problem for the fractional Laplacian: regularity up to the boundary.
{\em J. Math. Pures Appl.} {\bf 101} (2014), 275--302.

\bibitem{RSV}
{\sc X.\  Ros-Oton, J.\ Serra, E.\ Valdinoci}, 
Pohozaev identities for anisotropic integro-differential operators.
{\em Comm. Partial Differential Equations} {\bf 42} (2017), 1290--1321.

\bibitem{S}
{\sc A.\ Schikorra}, Nonlinear commutators for the fractional $p$-Laplacian and applications. {\em Math. Ann.} {\bf 366} (2016), 695--720.

\end{thebibliography}
\end{document}